\theoremstyle{plain}
\newtheorem{theorem}{Theorem}[section]
\newtheorem{assumption}[theorem]{Assumption}
\newtheorem{lemma}[theorem]{Lemma}
\newtheorem{corollary}[theorem]{Corollary}
\newtheorem{proposition}[theorem]{Proposition}
\theoremstyle{remark}
\newtheorem{remark}[theorem]{Remark}
\newtheorem{notation}[theorem]{Notation}
\newtheorem{example}[theorem]{Example}
\def\defn{\mathrel{:=}}
\def\la{\left\lvert}
\def\lA{\left\lVert}
\def\ra{\right\rvert}
\def\rA{\right\rVert}
\def\C{{\mathbb C}}
\def\R{{\mathbb R}}
\def\N{{\mathbb N}}
\def\T{{\mathbb T}}
\def\virgp{\raise 2pt\hbox{,}}
\def\({\left(}
\def\){\right)}
\def\<{\left\langle}
\def\>{\right\rangle}
\def\le{\leqslant}
\def\ge{\geqslant}
\def\les{\lesssim}
\def\Eq#1#2{\mathop{\sim}\limits_{#1\rightarrow#2}}
\def\Tend#1#2{\mathop{\longrightarrow}\limits_{#1\rightarrow#2}}
\def\d{{\partial}}
\def\eps{\varepsilon}
\def\si{{\sigma}}
\def\O{\mathcal O}
\DeclareMathOperator{\RE}{Re}
\DeclareMathOperator{\IM}{Im}
\DeclareMathOperator{\DIV}{\mathrm{div}}
\DeclareMathOperator{\id}{\mathrm{Id}}
\DeclareMathOperator{\cn}{\mathrm{div}}
\DeclareMathOperator{\curl}{\mathrm{curl}}
\def\defn{\mathrel{:=}}
\def\norm#1{\left\lVert{\smash[t]{{#1}}}\right\rVert}
\def\scal#1#2{\langle \, {#1} \hspace{1pt},\hspace{1pt} {#2} \,\rangle}
\numberwithin{equation}{section}
\def\ta{a}
\def\tm{{a}^\eps}
\def\tJ{J^\eps}
\def\tphi{\phi}
\def\tPhi{\Phi}
\def\tu{u^\eps}
\def\tv{v}
\def\tw{\psi^\eps}
\def\tbeta{\beta^\eps}
\def\tg{g^\eps}
\def\tomega{\omega^\eps}
\def\itomega{\overset{\circ}{\omega}{}^\eps}
\def\trho{\rho^\eps}
\def\tp{\rho}
\def\tq{q^\eps}
\def\reste{r^\eps}
\begin{document}

\title[Geometric optics for NLS]{Supercritical geometric optics for
 nonlinear Schr\"odinger equations}   
 \author[T. Alazard]{Thomas Alazard}
\address{CNRS \& Universit\'e Paris-Sud\\Math\'ematiques 
\\ B\^at.~425\\91405 
  Orsay cedex\\France}
\email{Thomas.Alazard@math.cnrs.fr}
\author[R. Carles]{R{\'e}mi Carles}
\address{CNRS \& Universit\'e Montpellier~2\\Math\'ematiques
\\CC~051\\Place Eug\`ene Bataillon\\34095
  Montpellier cedex 5\\ France}
\email{Remi.Carles@math.cnrs.fr}
\thanks{Support by the ANR
  project SCASEN is acknowledged.} 
\begin{abstract}
We consider the small time semi-classical limit for nonlinear Schr\"odinger
equations with defocusing, 
smooth, nonlinearity. For a super-cubic nonlinearity, the
limiting system is not directly hyperbolic, due to the presence of
vacuum. To overcome this issue, we introduce new unknown functions,
which are defined nonlinearly in terms of the wave function
itself. This approach provides a local version of the modulated energy
functional introduced by Y.~Brenier. The system we obtain is
hyperbolic symmetric, and the justification of WKB analysis follows. 
\end{abstract}

\maketitle


\section{Introduction}
\label{sec:intro}

\subsection{Presentation}
\label{sec:pres}

We study the 
behavior of the solution $\tu$ to
\begin{equation}\label{eq:nlssemi}
i\eps \partial_{t} \tu + \frac{\eps^2}{2}\Delta \tu = \la
\tu\ra^{2\sigma}\tu\quad ;\quad \tu_{\arrowvert
  t=0}=a_0^\eps e^{i\phi_{0}/\eps}, 
\end{equation}
as the parameter $\eps\in ]0,1]$ goes to zero. 
To fix matters, we work on $\R^n$, yet all the 
results are valid in the Torus~$\T^n$. Throughout all this
paper, we assume that the space dimension is $n\le 3$, which
corresponds to the physical cases. 
The unknown $\tu$ and the initial amplitude 
$a_0^\eps$ are complex valued, the phase $\phi_0$ is real-valued. The
case of a more general nonlinearity, of the form
\begin{equation*}
  i\eps \partial_t u^\eps +\frac{\eps^2}{2}\Delta
  u^\eps = \eps^\kappa 
  f\left(|u^\eps|^2\right) 
  u^\varepsilon\quad ;\quad 
u^\eps_{\mid t=0} =
  a_0^\varepsilon(x)e^{i\phi_0(x)/\varepsilon},
\end{equation*}
was discussed in \cite{CaBKW}. In particular, WKB type analysis is
justified for $\kappa\ge~1$ (weak nonlinearity). On the other hand,
when $\kappa=0$, there are only two cases in which the mathematical
analysis of the semi-classical limit for nonlinear Schr\"odinger
equations is well developed. First, for analytic initial
data. We refer to \cite{PGX93} and \cite{ThomannAnalytic} for this
approach. Second, for the cubic defocusing nonlinear Schr\"odinger equation
($\si =1$ in \eqref{eq:nlssemi}). 
Our goal is to justify geometric optics in Sobolev spaces for
\eqref{eq:nlssemi} when $\si \ge 2$. This question has remained open
since the pioneering work of E.~Grenier \cite{Grenier98}, where the
nonlinearity had to be cubic at the origin. 
\smallbreak

There are several motivations to study the semi-classical limit for
\eqref{eq:nlssemi}. Let us mention three. First, \eqref{eq:nlssemi}
with $\si=2$ (quintic nonlinearity) is sometimes used as a model for
one-dimensional Bose--Einstein condensation (\cite{KNSQ}). An
external potential is usually considered in this framework (most
commonly, an harmonic potential); we refer to \cite{CaBKW} to show
that the results of the present paper can easily be adapted to that
case (see also \S\ref{sec:potential}). 
\smallbreak

Second, the limit $\eps \to 0$ 
relates classical and quantum wave equations. In particular, the
semi-classical limit $\eps\to 0$ for $u^\eps$ is expected to be 
described by the laws of hydrodynamics; see
e.g. \cite{PGX93,Grenier98,GasserLinMarkowich,DesjardinsLin}.  If we
assume that 
$a_0^\eps \to a_0$ as $\eps \to 0$, then formally, $u^\eps$ is
expected to be well approximated by $a e^{i\phi/\eps}$, where
\begin{equation}
  \label{eq:limite}
  \left\{
\begin{aligned}
&\d_t \phi +\frac{1}{2}|\nabla \phi|^2+ |a|^{2\si}=0
\quad&& ;\quad \phi_{\mid t =0}=\phi_{0},\\
&\d_t a +\nabla\phi\cdot \nabla a +\frac{1}{2}a
\Delta \phi = 0\quad  &&;\quad a_{\mid
  t=0}=a_0.
 \end{aligned}
\right.
\end{equation}
This system is to be understood as a compressible Euler
equation. Indeed, setting $(\rho,v)=(|a|^2,\nabla \phi)$, we see that
$(\rho,v)$ solves:
\begin{equation*}
  \left\{
    \begin{aligned}
     &\d_t v +v\cdot \nabla v + \nabla \(\rho^\si\)=0\quad &&;\quad
v_{\mid t =0}=\nabla\phi_{0},\\ 
& \d_t \rho + \DIV \(\rho v\) =0\quad  && ;\quad \rho_{\mid
  t=0}=|a_0|^2. 
    \end{aligned}
\right.
\end{equation*}
Note that for $\si>1$, the above system is not directly hyperbolic
symmetric, due to the presence of vacuum. We will see that the above
system suffices to describe the convergences of the main two quadratic
observables for $u^\eps$, that is, position and current densities.  
We will also see that passing to the limit $\eps\to 0$ in the usual
conservation laws for nonlinear Schr\"odinger equations, we recover
important conservation laws for the Euler equation 
(see \S\ref{sec:conserv}). This also serves as a background to
note that some blow-up results for the nonlinear Schr\"odinger
equation on the one hand, and the compressible Euler equation on the
other hand, follow from very similar identities (see
\S\ref{sec:blowup}). This remark reinforces the bridge noticed by
D.~Serre \cite{Serre97}.   
\smallbreak

Another motivation lies in the study the Cauchy problem for 
nonlinear Schr\"odinger equations with no small parameter ($\eps =1$ in
\eqref{eq:nlssemi}, typically). As noticed in \cite[Appendix]{BGTENS}
and \cite{CaARMA}, one can prove ill-posedness results 
for energy super-critical equations by 
reducing the problem to semi-classical analysis
for \eqref{eq:nlssemi}. In \cite[Appendix~C]{CaARMA}, a result of loss of
regularity was proved for the cubic, defocusing nonlinear
Schr\"odinger equation, in the spirit of the pioneering work of
G.~Lebeau \cite{Lebeau05}. It concerned the flow associated to the
nonlinear Schr\"odinger equation near the origin. This was extended in
\cite{CaInstab} to the case of data of arbitrary size in Sobolev spaces. When
the nonlinearity is defocusing and not necessarily cubic, the result
of \cite[Appendix~C]{CaARMA} was extended in \cite{AC-perte}, by
studying the semi-classical limit for \eqref{eq:nlssemi}. However,
\cite{AC-perte} does not use the complete justification of geometric
optics, which makes it impossible to extend the results in
\cite{CaInstab} to the case of super-cubic nonlinearities. The
analysis presented in this paper makes it possible.

\subsection{Main results}
\label{sec:main}

For $s\ge 0$, we shall denote $H^s(\R^n)$, or simply $H^s$, the
Sobolev space of order $s$. We equip
$H^\infty(\R^n)=H^\infty \defn \cap_{s>0}H^s(\R^n)$ with the distance
\begin{equation*}
  d(f,g)= \sum_{s\in \N} 2^{-s}\frac{\|f-g\|_{H^s}}{1+\|f-g\|_{H^s}}\cdot
\end{equation*}
Note that for any $k\in \N$ and any interval $I$, $C^k(I;H^\infty) =
\cap_{s\ge 0}C^k(I; H^s)$. 
\begin{assumption}\label{assu:general}
We require $\sigma\in\N\setminus \{0\}$ 
without recalling this assumption explicitly
in the statements. Similarly, it is assumed that $a_0, \phi_0 \in
H^\infty$, where recall that $\phi_{0}$ is 
real-valued.
We also suppose that 
$a_0^\eps$ belongs uniformly to $H^\infty$ and 
converges towards $a_0$ in $H^\infty$ as $\eps \to 0$. More
precisely,
\begin{equation*}
  a_0^\eps =a_0 +\O(\eps)\quad \text{in }H^s(\R^n),\ \forall s\ge 0. 
\end{equation*}
\end{assumption}
The first remark, based on a change of unknown due to T.~Makino, S.~Ukai and
S.~Kawashima~\cite{MUK86} (see also \cite{JYC90}), is that the limiting system
\eqref{eq:limite} is locally well-posed in Sobolev spaces, despite the
possible presence of vacuum:
\begin{lemma}[from \cite{AC-perte}]\label{weaklemma:MUK}
Let $n\ge 1$, and let Assumption~\ref{assu:general} be
satisfied. There exists 
$T^*>0$ such that \eqref{eq:limite} has a
unique maximal solution 
$(\phi,a)$ in $C([0,T^*[;H^\infty(\R^n))$.  
\end{lemma}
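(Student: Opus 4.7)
The plan is to follow the Makino--Ukai--Kawashima change of unknown, work at the level of $(v,a)$ with $v\defn \nabla\phi$, and recover $\phi$ at the end by quadrature.

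First I would differentiate the eikonal equation in space to obtain, for $v\defn\nabla\phi$, the coupled system
\begin{equation*}
\left\{
\begin{aligned}
&\d_t v +(v\cdot\nabla)v +\sigma |a|^{2(\sigma-1)}\bigl(a\,\nabla\bar a+\bar a\,\nabla a\bigr)=0,\\
&\d_t a +(v\cdot\nabla) a +\tfrac12\,a\,\DIV v=0,
\end{aligned}\right.
\end{equation*}
with initial data $(\nabla\phi_0,a_0)$. The crucial point, and the reason for requiring $\sigma\in\N$, is that $|a|^{2(\sigma-1)}=(a\bar a)^{\sigma-1}$ is a smooth (polynomial) function of $(a,\bar a)$. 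Writing $a=a_1+ia_2$, the system takes the quasilinear form $\d_t U+\sum_{j=1}^n A_j(U)\d_j U=0$ for $U=(v,a_1,a_2)^T\in\R^{n+2}$, with coefficients $A_j$ that are smooth (in fact polynomial) functions of $U$, with no singularity at $a=0$.

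Second, I would check that this system is symmetric hyperbolic by exhibiting an explicit positive definite symmetrizer $S(U)$ which makes each $S(U)A_j(U)$ symmetric. A convenient choice is the block-diagonal matrix $S(U)=\mathrm{diag}(\id_n,\,4\sigma|a|^{2(\sigma-1)}\id_2)$, modulo the classical trick of rewriting the amplitude equation as a pair of equations for $(2\sqrt\sigma|a|^{\sigma-1}a_1,2\sqrt\sigma|a|^{\sigma-1}a_2)$; the pressure-like prefactor $|a|^{2(\sigma-1)}$ then appears symmetrically on the two off-diagonal coupling terms. This is the local analogue of the modulated-energy structure mentioned in the abstract.

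Third, once the symmetric hyperbolic structure is in place, I would apply the standard Kato theory of quasilinear symmetric hyperbolic systems to obtain a unique maximal solution $U\in C([0,T^*[;H^s)$ for any fixed $s>n/2+1$, then propagate regularity to $H^\infty$ by standard tame energy estimates and uniqueness. The main obstacle here is really the symmetrization in Step~2: after that, everything is textbook.

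Finally, I would recover $\phi$. Define
\begin{equation*}
\phi(t,x)\defn \phi_0(x)-\int_0^t\Bigl(\tfrac12|v|^2+|a|^{2\sigma}\Bigr)(\tau,x)\,d\tau,
\end{equation*}
so $\phi\in C^1([0,T^*[;H^\infty)$ and $\phi$ satisfies the eikonal equation by construction. It remains to verify that $\nabla\phi=v$. Since the first evolution equation can be rewritten $\d_t v+\tfrac12\nabla|v|^2+\nabla(|a|^{2\sigma})=v\wedge\curl v$, taking $\curl$ shows that $\curl v$ solves a linear transport equation with zero initial data (as $v|_{t=0}=\nabla\phi_0$), hence $\curl v\equiv 0$; then $\d_t(v-\nabla\phi)=0$ and $v-\nabla\phi=0$ at $t=0$, so $\nabla\phi=v$ for all time. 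Uniqueness and maximality of $(\phi,a)$ follow from those of $U$.
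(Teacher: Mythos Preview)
Your Step~2 is the crux, and it does not go through as written. The proposed symmetrizer $S(U)=\mathrm{diag}(\id_n,\,4\sigma|a|^{2(\sigma-1)}\id_2)$ degenerates at every zero of $a$: this is exactly the vacuum obstruction, and it means the system for $(v,a_1,a_2)$ is \emph{not} symmetric hyperbolic in the sense required by Kato's theory, which asks for a uniformly positive definite symmetrizer. Your hedge --- rewriting in terms of $|a|^{\sigma-1}a_1$, $|a|^{\sigma-1}a_2$ --- does not save this: the map $a\mapsto |a|^{\sigma-1}a$ is only $C^{\sigma-1}$ at the origin (for $\sigma=2$ it is $|a|a$, which is not $C^2$), so the initial data for the new unknowns are not in $H^\infty$, and you cannot recover the $H^\infty$ statement of the lemma this way.

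The actual Makino--Ukai--Kawashima trick, as carried out in the paper, uses the \emph{complex power} $A\defn a^\sigma$, which is a polynomial in $a$ and hence smooth with $A(0)=a_0^\sigma\in H^\infty$. One checks that $(v,A)$ satisfies
\[
\d_t v+v\cdot\nabla v+\nabla|A|^2=0,\qquad \d_t A+v\cdot\nabla A+\tfrac{\sigma}{2}A\,\DIV v=0,
\]
which is symmetric hyperbolic with a \emph{constant} symmetrizer, so standard theory gives $(v,A)\in C([0,T^*[;H^\infty)$. One then defines $\phi$ by your quadrature, recovers $a$ by solving the \emph{linear} transport equation $\d_t a+v\cdot\nabla a+\tfrac12 a\,\DIV v=0$ with data $a_0$ (now $v$ is a known $H^\infty$ coefficient), and finally checks $a^\sigma=A$ since both obey the same linear equation with the same data. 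Your Steps~1, 3, 4 are fine once Step~2 is replaced by this.
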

The proof is recalled in $\S\ref{sec:prelim}$. 
It is based on a change of unknown introduced in \cite{MUK86} (see
also \cite{JYC90}), which
makes it possible  
to rewrite the equation under the form of a quasi-linear symmetric
hyperbolic system.  
This transformation of the equations, which consists in introducing
$(v,u)\defn (\nabla\phi,a^\sigma)$,  
clearly exhibits a key dichotomy between $\sigma=1$ and 
$\sigma\ge 2$. In particular, a stability analysis in the case $\si
\ge 2$ is not straightforward, since the above mentioned change of
variables does not seem to be well adapted to Schr\"odinger
equations. 

\smallbreak
Here is the main result of this paper. In the context of Assumption
\ref{assu:general}, we prove 
that the solutions of~\eqref{eq:nlssemi} exist and satisfy uniform
estimates on a time interval 
which is independent of $\eps$. 
\begin{theorem}\label{theo:main}
Let $n\le 3$, and let Assumption~\ref{assu:general} be satisfied. 
There exists $T\in ]0,T^*[$, where $T^*$ is given by
Lemma~$\ref{weaklemma:MUK}$, such that the following holds. 
For all $\eps\in]0,1]$ the Cauchy problem \eqref{eq:nlssemi} has a unique 
solution $\tu\in C([0,T];H^\infty(\R^n))$. Moreover,
\begin{equation}\label{eq:uniform}
\sup_{\eps \in ]0,1]}\bigl\lVert u^\eps
e^{-i\phi/\eps}\bigr\rVert_{L^\infty([0,T];H^k(\R^n))}<+\infty,
\end{equation}
where $\phi\in C^\infty_{b}([0,T]\times\R^n)$ is given by
\eqref{eq:limite}, 
and the index $k$ is as follows:
\begin{itemize}
\item If $\si =1$, then $k\in \N$ is arbitrary.
\item If $\si = 2$ and $n=1$, then we can take $k=2$.
\item If $\si = 2$ and $2\le n\le 3$, then we can take $k=1$.
\item If $\si\ge 3$, then we can take $k=\si$. 
\end{itemize}
\end{theorem}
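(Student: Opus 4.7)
The plan is to use Grenier's polar-like ansatz $\tu=a^\eps e^{i\phi^\eps/\eps}$, with $a^\eps$ complex-valued and $\phi^\eps$ real-valued, so as to eliminate the singular $1/\eps$ factor in \eqref{eq:nlssemi} and obtain the $\eps$-perturbed system
\begin{equation*}
\left\{
\begin{aligned}
&\d_t \phi^\eps +\tfrac{1}{2}|\nabla\phi^\eps|^2+|a^\eps|^{2\si}=0,\\
&\d_t a^\eps +\nabla\phi^\eps\cdot\nabla a^\eps+\tfrac{1}{2}a^\eps\Delta\phi^\eps=i\tfrac{\eps}{2}\Delta a^\eps,
\end{aligned}
\right.
\end{equation*}
with data $(\phi_0,a_0^\eps)$. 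The Schr\"odinger Laplacian on the right is skew-symmetric in $L^2$, so if the ``Euler part'' can be made symmetric hyperbolic, it will be harmless in an energy estimate. For $\si=1$ this is exactly Grenier's symmetrization, performed by splitting $a^\eps$ into real and imaginary parts and working with $(\nabla\phi^\eps,\RE a^\eps,\IM a^\eps)$. For $\si\ge 2$, however, the pressure gradient $\nabla|a^\eps|^{2\si}$ carries the factor $|a^\eps|^{2(\si-1)}$ which degenerates at vacuum, and the Makino--Ukai--Kawashima substitution $a^\eps\mapsto(a^\eps)^\si$ that cures the analogous problem for compressible Euler is incompatible with the Schr\"odinger dispersion $i\eps\Delta a^\eps/2$ and is multi-valued for complex $a^\eps$.

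To overcome this, I would introduce, as the abstract suggests, new unknowns $(\IV^\eps,\IA^\eps)$ defined nonlinearly from $\tu$ itself, built so that the weight $|a^\eps|^{2(\si-1)}$ appears symmetrically in both equations of the coupling: roughly, $\IV^\eps=\nabla\phi^\eps$ and a smoothed, phase-invariant substitute $\IA^\eps$ for $|a^\eps|^{\si-1}\nabla a^\eps$ (equivalently, for a combination of $\tu$ and $\eps\nabla\tu$ that is insensitive to the oscillating phase). The associated $H^k$ energy $E_k(t)$ is then the local, $H^k$-level version of Brenier's modulated functional $\int(\rho^\eps|v^\eps-v|^2+\cdots)\,dx$. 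In these new variables, the quasi-linear transport part of the system is symmetric hyperbolic with coefficients that depend tamely, and non-singularly, on $(a^\eps,\nabla\phi^\eps)$ even at vacuum, while the Schr\"odinger dispersion stays skew-adjoint with respect to the same symmetrizer and integrates out. A Moser-type tame product estimate then gives $\dot E_k\le C(1+E_k)E_k$, which is closed by Gronwall on some interval $[0,T]$ with $T<T^*$.

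The main obstacle is identifying the right nonlinear change of variables and verifying that $i\eps\Delta a^\eps/2$ remains exactly skew-adjoint for the new symmetrizer: any residual symmetric part would contribute an $O(1)$ loss to $\dot E_k$ and ruin the uniform estimate. The admissible values of $k$ reflect exactly how much regularity the symmetrizer and Moser estimates consume: when $\si=1$ no degenerate weight appears and any $k$ works; when $\si=2$ the weight $|a^\eps|^2$ requires $k>n/2$ through Sobolev embedding, which gives $k=2$ for $n=1$ and $k=1$ for $n\in\{2,3\}$; when $\si\ge 3$ the $\si$ derivatives needed to control $|a^\eps|^{2\si}$ in the symmetrizer force $k=\si$. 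Once uniform bounds on $(\phi^\eps,a^\eps)$ are obtained on $[0,T]$, an energy argument on the difference with $(\phi,a)$ yields $\phi^\eps-\phi=\O(\eps)$ in $H^{k+1}$ and $a^\eps-a=\O(\eps)$ in $H^k$, and the identity $\tu e^{-i\phi/\eps}=a^\eps e^{i(\phi^\eps-\phi)/\eps}$ with exponential factor uniformly bounded in $H^k$ yields \eqref{eq:uniform}, completing the proof.
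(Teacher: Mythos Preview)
Your proposal identifies the right difficulty---the degenerate factor $|a^\eps|^{2(\si-1)}$ in the pressure at vacuum, and the incompatibility of the Makino--Ukai--Kawashima substitution with the Schr\"odinger dispersion---but it does not actually solve it, and the route you sketch diverges from the paper's in a way that matters.

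First, the paper does \emph{not} use Grenier's unknown phase $\phi^\eps$. It fixes the limit phase $\phi$ from \eqref{eq:limite} and sets $a^\eps\defn u^\eps e^{-i\phi/\eps}$ directly. The equation for $a^\eps$ then carries the singular source $-\tfrac{i}{\eps}\bigl(|a^\eps|^{2\si}-|a|^{2\si}\bigr)a^\eps$, and the entire point of the new unknown is to absorb this \emph{difference}. Concretely, the paper factors $|a^\eps|^{2\si}-|a|^{2\si}=g^\eps\beta^\eps$ with
\[
\beta^\eps=B_\si\bigl(|a^\eps|^2,|a|^2\bigr),\quad B_\si(r_1,r_2)=(r_1-r_2)\sqrt{Q_\si(r_1,r_2)},\quad Q_\si(r_1,r_2)=2\si\!\int_0^1\!(1-s)\bigl(r_2+s(r_1-r_2)\bigr)^{\si-1}ds,
\]
and shows that $q^\eps\defn\eps^{-1}\beta^\eps$ satisfies a transport equation whose only coupling to $a^\eps$ is through $\IM(g^\eps\overline{a^\eps}\DIV\nabla a^\eps)$, which is exactly symmetric with the $i a^\eps g^\eps\nabla q^\eps$ term appearing in the equation for $\nabla a^\eps$. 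The new unknown therefore involves both $a^\eps$ and the known limit $a$; this ``modulated'' structure is what makes it a local version of Brenier's functional. Your proposed $A^\eps\sim|a^\eps|^{\si-1}\nabla a^\eps$ depends on $a^\eps$ alone, is not smooth at vacuum, and it is not clear how to derive a closed, symmetric evolution for it while keeping $i\eps\Delta/2$ skew-adjoint. The hard work---Proposition~\ref{prop:betag} and the explicit computation of $B_\si,G_\si$---is precisely what your sketch leaves as ``identifying the right nonlinear change of variables.''

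Second, your explanation of the admissible $k$ is off. For $\si\ge3$ the constraint $k=\si$ comes from the fact that $F_\si(z,z')=zG_\si(|z|^2,|z'|^2)$ is homogeneous of degree $\si$ but only $C^{\si-1}$, so a dedicated composition lemma (Lemma~\ref{lemm:comp}) gives $\|a^\eps g^\eps\|_{H^\si}\lesssim\|(a^\eps,a)\|_{H^\si}^\si$, and one then closes in $H^{\si-1}$ for $U^\eps=(q^\eps,a^\eps,\nabla a^\eps,\ldots)$. For $\si=2$, $n\in\{2,3\}$, the value $k=1$ does \emph{not} satisfy $k>n/2$; the paper instead abandons the quasi-linear estimate and uses a direct modulated-energy identity in $L^2$ for $e^\eps=|a^\eps|^2+|\nabla a^\eps|^2+|q^\eps|^2$, relying on the independently known global existence of $u^\eps$ (for $n=3$, the Colliander--Keel--Staffilani--Takaoka--Tao result).

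Finally, your last step---proving $\phi^\eps-\phi=\O(\eps)$ in $H^{k+1}$ and then bounding $e^{i(\phi^\eps-\phi)/\eps}$ in $H^k$---is both unnecessary in the paper's framework and nontrivial on its own: that bound would require controlling $\eps^{-1}\nabla(\phi^\eps-\phi)$ uniformly in $H^{k-1}$, which is again a stability estimate for the degenerate Euler system at vacuum.
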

\begin{remark}
  The estimate \eqref{eq:uniform} is trivial for $k=0$, from the
  conservation of mass, 
  which holds even for weak solutions (\cite{GV85c}). 
\end{remark}
\begin{remark} For sufficiently large $\si$, the approach followed in
  this paper makes it possible to extend Theorem~\ref{theo:main} to
  the case of higher dimensions, $n\ge 4$. We shall not pursue this
  question. 
\end{remark}
\begin{remark}
The assumption $a_0^\eps =a_0 +\O(\eps)$ plays a crucial role in the
above result. Indeed, the analysis 
in \cite{CaARMA} shows that in the case $\si=1$, if we assume only
$a_0^\eps =a_0 + o(1)$, then the conclusion of Theorem~\ref{theo:main}
fails. For instance, if $a_0^\eps = (1+\eps^\alpha)a_0$ for some
$0<\alpha \le 1$, then for arbitrarily small $t>0$ independent of
$\eps$, $u^\eps e^{-i\phi/\eps}$ has oscillations of order
$\eps^{1-\alpha}$. So if $\alpha <1$, then $u^\eps e^{-i\phi/\eps}$ is
not bounded in $H^1$. 
\end{remark}
For $\si=1$, the above result is a consequence of the analysis due to
E.~Grenier \cite{Grenier98}, and remains valid in any space dimension
$n\ge 1$. We propose an alternate proof in \S\ref{sec:cubic}.  

In the quintic case $\si =2$,  for all $\eps>0$, the Cauchy problem
\eqref{eq:nlssemi} has a unique global solution  
in  $C(\R;H^\infty(\R^n))$. Indeed, for $n=1$ this follows from
standard results for semi-linear equations;  
in the energy-subcritical case $n= 2$, this follows from 
Strichartz estimate and the conservation laws; 
for the difficult energy-critical case $n=3$, this has been proved 
by J. Colliander, M. Keel, G. Staffilani, H. Takaoka and T. Tao
in~\cite{CKSTTAnnals}. Therefore, the main point in our result is the
uniform bound \eqref{eq:uniform}. The same is true for the case $n\le
2$ and $\si\ge 3$, since the nonlinearity is then $H^1$ sub-critical.

For $\sigma\ge 3$ and $n=3$, the equation is $H^1$
super-critical. Therefore, not only the bound  \eqref{eq:uniform} is
new, but also the fact that we can construct a smooth solution $\tu$ to
\eqref{eq:nlssemi} on a time interval $[0,T]$ independent of $\eps\in
]0,1]$. 
\smallbreak

As a matter of fact, the proof that we present for
Theorem~\ref{theo:main} yields a stronger result, whose statement
demands a more precise analysis (see Theorem~\ref{theo:tout}). In
particular, we infer that the quadratic observables converge strongly towards
the solution of compressible Euler equations  
for potential flows in vacuum, hence giving the Wigner measure
associated to $(u^\eps)_\eps$ (see e.g. \cite{BurqMesures,GMMP} for
the definition and the main properties). The following result is
proved in \S\ref{sec:quad}.
\begin{proposition}\label{prop:quad}
  Let $n\le 3$, and let Assumption~\ref{assu:general} be satisfied. 
There exists $T\in ]0,T^*[$, where $T^*$ is given by
Lemma~$\ref{weaklemma:MUK}$, such that the position and current
densities converge strongly on $[0,T]$ as $\eps \to 0$:
\begin{align*}
  &|u^\eps|^2 \Tend \eps 0 |a|^2\quad && \text{in
  }C\([0,T];L^{\si+1}(\R^n)\). \\
& \IM \(\eps \overline u^\eps \nabla u^\eps\)\Tend \eps 0
  |a|^2\nabla \phi \quad
  && \text{in 
  }C\([0,T];L^{\si+1}(\R^n)+ L^{1}(\R^n)\). 
\end{align*}
In particular, there is only one Wigner measure associated to
$(u^\eps)_\eps$, and it is given by
\begin{equation*}
  \mu(t,dx,d\xi) = |a(t,x)|^2 dx\otimes \delta \( \xi - \nabla
  \phi(t,x)\). 
\end{equation*}
\end{proposition}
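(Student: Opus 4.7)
The plan is to exploit the factorization $u^\eps = a^\eps e^{i\phi/\eps}$ with $a^\eps \defn u^\eps e^{-i\phi/\eps}$. By Theorem~\ref{theo:main}, the family $(a^\eps)_\eps$ is uniformly bounded in $L^\infty([0,T];H^k)$; the refined analysis underlying it (announced as Theorem~\ref{theo:tout}) is expected to furthermore provide strong convergence $a^\eps \to a$ in $C([0,T];H^{k'})$ for some $k'<k$. Proposition~\ref{prop:quad} will then follow from this strong convergence by soft arguments.

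For the position density, $|u^\eps|^2 = |a^\eps|^2$, and the polarization identity $|a^\eps|^2 - |a|^2 = (a^\eps - a)\overline{a^\eps} + a\,\overline{(a^\eps - a)}$ combined with H\"older's inequality gives
\[
\norm{|a^\eps|^2 - |a|^2}_{L^{\si+1}} \les \norm{a^\eps - a}_{L^{2(\si+1)}}\bigl(\norm{a^\eps}_{L^{2(\si+1)}} + \norm{a}_{L^{2(\si+1)}}\bigr).
\]
In each case of Theorem~\ref{theo:main}, the index $k$ is large enough for the Sobolev embedding $H^k(\R^n)\hookrightarrow L^{2(\si+1)}(\R^n)$ to hold when $n\le 3$. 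Interpolating the uniform $H^k$ bound with the strong convergence in $H^{k'}$ then yields $a^\eps \to a$ in $C([0,T];L^{2(\si+1)})$, which gives the first claim.

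For the current density, direct differentiation of the ansatz produces
\[
\IM\bigl(\eps\,\overline{u^\eps}\nabla u^\eps\bigr) = \eps\,\IM\bigl(\overline{a^\eps}\nabla a^\eps\bigr) + |a^\eps|^2\,\nabla\phi.
\]
The first term is bounded in $L^\infty([0,T];L^1)$ by Cauchy--Schwarz and the uniform $H^1$ bound on $a^\eps$, hence vanishes in $L^\infty([0,T];L^1)$ as $\eps\to 0$. The second term converges to $|a|^2\nabla\phi$ in $C([0,T];L^{\si+1})$ by the previous paragraph, since $\phi\in C^\infty_b$ implies $\nabla\phi\in L^\infty$. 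Summation gives the announced convergence in $C([0,T];L^{\si+1}+L^1)$.

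Finally, to identify the Wigner measure, one tests the Wigner transform of $u^\eps$ against a symbol $\varphi\in C_c(\R^n\times\R^n)$ and uses the Taylor expansion $\eps^{-1}\bigl[\phi(x+\eps y/2) - \phi(x-\eps y/2)\bigr] = y\cdot\nabla\phi(x) + \O(\eps^2|y|^3)$. Combining this with the strong $L^2$ convergence $a^\eps\to a$ via a stationary-phase argument produces the limit $\int \varphi(x,\nabla\phi(x))\,|a(x)|^2\,dx$, exactly as for a genuine WKB state (see \cite{GMMP}). The main obstacle in all of this lies upstream of the present proposition: it is producing the strong convergence $a^\eps\to a$ itself, which demands a stability estimate for the (approximate) quasi-linear symmetric hyperbolic system satisfied by the auxiliary unknowns introduced in the proof of Theorem~\ref{theo:main}.
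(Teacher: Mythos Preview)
Your argument rests on the strong convergence $a^\eps\to a$ in $C([0,T];H^{k'})$, but this is precisely what fails. Theorem~\ref{theo:tout} supplies only uniform bounds, not convergence, and the paper shows explicitly (Theorem~\ref{theo:ghost} and the discussion at the end of \S\ref{sec:ghost}) that when $a^\eps$ does converge, the limit is $\widetilde a = a\,e^{i\phi^{(1)}}$ with $\phi^{(1)}\not\equiv 0$ in general. Moreover, that convergence requires Assumption~\ref{assu:ghost}, which is strictly stronger than Assumption~\ref{assu:general}: under Assumption~\ref{assu:general} alone, $(a_0^\eps-a_0)/\eps$ need not converge, and then neither does $a^\eps$. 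So the ``upstream obstacle'' you identify is not merely technical---the stability estimate you hope for would produce the wrong limit, or no limit at all.

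The paper avoids this altogether by using the \emph{second} piece of the estimate~\eqref{eq:uniform2}, the bound on the auxiliary unknown $q^\eps$ in $C([0,T];L^2)$. Since by construction $(\eps q^\eps)^2=(|a^\eps|^2-|a|^2)^2\,Q_\si(|a^\eps|^2,|a|^2)$ and $Q_\si(r_1,r_2)\ges r_1^{\si-1}+r_2^{\si-1}$, boundedness of $q^\eps$ in $L^2$ yields directly
\[
\int_{\R^n}\bigl(|a^\eps|^2-|a|^2\bigr)^{\si+1}dx \les \eps^2,
\]
which is the convergence of the position density in $L^{\si+1}$ with an explicit rate, without ever asserting convergence of $a^\eps$ itself. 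Your treatment of the current density (splitting into $|a^\eps|^2\nabla\phi$ and $\eps\,\IM(\overline{a^\eps}\nabla a^\eps)$) then goes through exactly as you wrote, and matches the paper.
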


The analysis proposed to prove Theorem~\ref{theo:main} allows us to
compute the leading order 
behavior of the wave function $\tu$, provided that we know a 
more precise WKB expansion of the initial amplitude.
\begin{assumption}\label{assu:ghost}
In addition to Assumption~\ref{assu:general}, we
assume that there exists $a_1\in H^\infty(\R^n)$ such that
\begin{equation*}
  a_0^\eps = a_0 +\eps a_1 +\O\(\eps^2\)\quad \text{in }H^s(\R^n),\
  \forall s\ge 0. 
\end{equation*}
\end{assumption}
\begin{theorem}\label{theo:ghost}
  Let $n\le 3$, and let 
  Assumption~\ref{assu:ghost} be satisfied.  There exists
  $\widetilde a\in  C([0,T^*[;H^\infty)$, and for any $T\in
  ]0,T^*[$, there exists $\eps(T)>0$, such that $u^\eps \in
  C([0,T];H^\infty)$ for $\eps \in ]0,\eps(T)]$, and
\begin{equation}\label{eq:pointwise}
  \begin{aligned}
    &\left\| u^\eps - \widetilde a
  e^{i\phi/\eps}\right\|_{L^\infty([0,T];L^2\cap L^p)}=\O(\eps)\quad
  \text{when 
  }\si=2 \text{ and }2\le n\le 3,\\ 
&\left\| u^\eps - \widetilde a
  e^{i\phi/\eps}\right\|_{L^\infty([0,T];L^2\cap
  L^\infty)}=\O(\eps)\quad \text{in the other cases,}
  \end{aligned}
\end{equation}
where $p$ is such that $H^1(\R^n)\subset L^p(\R^n)$. 
\end{theorem}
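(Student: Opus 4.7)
The plan is to refine the stability analysis underlying Theorem~\ref{theo:main} one order further in $\eps$, producing a fixed profile $\widetilde a$ against which $a^\eps \defn u^\eps e^{-i\phi/\eps}$ converges at rate $\eps$. I would start from the reformulation used to prove Theorem~\ref{theo:main}: factorize $u^\eps = a^\eps e^{i\phi/\eps}$ with $\phi$ given by \eqref{eq:limite}; then $a^\eps$, together with the new nonlinear unknowns of the paper, satisfies a symmetric hyperbolic system with a skew-adjoint $O(\eps)$ Schr\"odinger-type perturbation, for which $\eps$-uniform $H^k$-energy estimates hold on a time interval $[0,T]\subset[0,T^*[$ that can be chosen arbitrarily close to $T^*$ provided $\eps$ is small enough (this yields the extension of the lifespan of $u^\eps$ asserted in the statement).

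The profile $\widetilde a$ is defined as the $a$-component of the solution to the formal limit ($\eps = 0$) of this symmetric hyperbolic system, with initial data obtained as the zeroth-order term in the $\eps$-expansion of the new unknowns at $t = 0$. Because the initial amplitude carries the shift $\eps a_1$, this limit records a non-trivial phase modulation: one expects $\widetilde a = a\, e^{i\Psi}$, with $a$ as in \eqref{eq:limite} and $\Psi \in C([0,T^*[;H^\infty)$ a real-valued phase solving a linear transport equation along the flow of $\nabla\phi$ with source and initial data determined by $(a, a_1)$. Well-posedness of the linearized system is obtained by differentiating the MUK-type argument of Lemma~\ref{weaklemma:MUK}.

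For the error estimate, let $R^\eps$ denote the difference between $(a^\eps,\ldots)$ and $(\widetilde a,\ldots)$ at the level of the symmetric hyperbolic system. By construction and Assumption~\ref{assu:ghost}, $R^\eps|_{t=0} = O(\eps)$ in every $H^s$, and $(\widetilde a,\ldots)$ solves the symmetric hyperbolic system up to a residual of order $\eps$ produced by the Schr\"odinger term $\tfrac{i\eps}{2}\Delta \widetilde a$. The $\eps$-uniform energy estimate of Theorem~\ref{theo:main} applied to $R^\eps$ then gives $R^\eps = O(\eps)$ in $L^\infty([0,T];H^k)$. Since $|e^{i\phi/\eps}|=1$, this yields $u^\eps - \widetilde a\, e^{i\phi/\eps} = O(\eps)$ in $L^\infty([0,T];H^k)$, and Sobolev embedding produces \eqref{eq:pointwise}: $H^k \hookrightarrow L^\infty$ for $k\ge 2$, $n\le 3$, while the quintic case $\si=2$, $n\in\{2,3\}$ uses $H^1 \hookrightarrow L^p$.

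The main obstacle is identifying the correct profile $\widetilde a$ at the level of the new nonlinear unknowns: one must show that the $\eps$-shift of the initial amplitude propagates consistently through the symmetric hyperbolic system, and in particular accounts for the first-order phase $\Psi$. This is precisely where Assumption~\ref{assu:ghost} enters---with only $a_0^\eps = a_0 + o(1)$, the phase $\Psi$ has no well-defined limit and the naive choice $\widetilde a = a$ fails to give a pointwise $O(\eps)$ approximation. Once $\widetilde a$ is specified correctly, the rest is a direct application of the $\eps$-uniform symmetric-hyperbolic stability established for Theorem~\ref{theo:main}.
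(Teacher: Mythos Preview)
Your proposal captures the right architecture---a corrected profile $\widetilde a = a\,e^{i\Psi}$ followed by an $\O(\eps)$ error estimate via the symmetric-hyperbolic machinery---and this is indeed the paper's strategy. Two points deserve comment, the second of which is a genuine gap.

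First, the paper does not define $\widetilde a$ through the formal $\eps\to 0$ limit of the full system of \S\ref{sec:reste}. It instead writes down explicitly the linearized system \eqref{eq:ghostsyst} for the pair $(\phi^{(1)},a^{(1)})$, and sets $\widetilde a = a\,e^{i\phi^{(1)}}$. Well-posedness of \eqref{eq:ghostsyst} (Lemma~\ref{weaklemma:MUK2}) requires its own MUK-type change of unknown, different for $\sigma$ even and $\sigma$ odd, because the coupling coefficient $|a|^{2\sigma-2}$ degenerates at vacuum. Your remark about ``differentiating the MUK argument'' points in the right direction, but the amplitude corrector $a^{(1)}$ is an essential part of the construction, not just the phase.

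Second, and more substantively, the paper does \emph{not} estimate a plain difference $R^\eps$ between $(a^\eps,\psi^\eps,q^\eps)$ and an $\eps$-independent limiting triple. It introduces a \emph{new} nonlinear unknown
\[
\widetilde q^\eps \;=\; \frac{1}{\eps}\,B_\sigma\bigl(|a^\eps|^2,\;|\widetilde a + \eps\,\widetilde a^{(1)}|^2\bigr),
\]
with the reference density shifted by the first corrector. This makes $\widetilde q^\eps(0)=\O(\eps)$ immediate (same computation as \eqref{eq:CIq} with one extra power of $\eps$) and, crucially, gives $(r^\eps,\nabla r^\eps,\widetilde q^\eps)$ a system of exactly the form \eqref{eq:systfinal} plus $\O(\eps)$ sources, so the estimates of \S\ref{sec:reste} apply verbatim. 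Your plan of subtracting two solution vectors would instead require Lipschitz bounds in $H^{k-1}$ for the non-smooth coefficient $F_\sigma(z,z')=z\,G_\sigma(|z|^2,|z'|^2)$ and for $\mathcal Q_\sigma$, as well as showing $q^\eps(0)-q(0)=\O(\eps)$ in $H^{k-1}$; these are delicate for exactly the reason that forced Lemma~\ref{lemm:comp}, and your proposal does not address them (the case $\sigma=2$, $n\in\{2,3\}$ is especially fragile, since one lacks an $L^\infty$ bound on $a^\eps$). The shifted-reference trick sidesteps all of this, at the price of needing $a^{(1)}$ explicitly---which is why the paper carries it along.
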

\begin{remark}
In general, $\widetilde a\not = a$, unless $a_0$ is real-valued and 
$a_1 \in i\R$ (see \S\ref{sec:ghost}). Therefore, the system
\eqref{eq:limite} does not suffice, in general, to describe the
asymptotic behavior of the \emph{wave function} $u^\eps$, even though
it suffices to describe the position and current densities (see
Proposition~\ref{prop:quad} above). 
\end{remark}


\subsection{Scheme of the proof of Theorem~\ref{theo:main}}
\label{sec:scheme}

To prove that the solutions to the
Cauchy problem~\eqref{eq:nlssemi} exist for a time  
independent of $\eps$, it is enough to prove uniform estimates for the
$L^\infty$ norm of $\tu$ (see Lemma~\ref{lemI1} below).  
To do so, our approach toward the semi-classical limit is to filter
out the oscillations  
by the following change of unknown, involving the solution $(a,\phi)$ of the
limit system~\eqref{eq:limite}: 
\begin{equation}\label{defa}
\tm(t,x)\defn u^\eps(t,x) e^{-i\phi(t,x)/\eps}.
\end{equation}
The key point is that, although it is obviously equivalent to prove
$L^\infty$ estimates for $\tu$ and $\tm$, it is  
expected that one can prove uniform estimates in Sobolev spaces for
$\tm$, thereby obtaining the desired  
$L^\infty$ estimates from the Sobolev embedding. Obviously, uniform
estimates in Sobolev spaces for $u^\eps$ are not expected to hold, due
to the rapid oscillations described by $\phi$. 

\smallbreak
The amplitude $\tm$ solves the following evolution equation:
\begin{equation}\label{eq:ae}
\left\{
  \begin{aligned}
&  \partial_t \tm +\nabla\phi\cdot\nabla \tm
+\frac{1}{2}\tm \Delta\phi
- i\frac{\eps}{2}\Delta \tm
=-\frac{i}{\eps}\( \left| \tm \right|^{2\si} - \left|a\right|^{2\si}\)
\tm . \\
&\tm_{\mid t=0}= a_0^\eps.  
  \end{aligned}
\right.
\end{equation}
It is clear that the mass is conserved:
$$
\lA \tm(t)\rA_{L^2}=\lA \tu(t)\rA_{L^2}=\lA \tu(0)\rA_{L^2}=\lA
a_0^\eps\rA_{L^2}.
$$ 
This can be seen 
by multiplying~\eqref{eq:ae} by $\overline a^\eps$, taking the real part
and integrating over $\R^n$. 
Note that the large term in $\eps^{-1}$ disappears from 
the energy estimate. 
Indeed, the large term in $\eps^{-1}$ is
a nonlinear rotation term. But precisely because this term is
nonlinear, it does not disappear from the estimate
of the derivatives  
(the equation is not translation invariant). Indeed, 
$\nabla\tm$ solves
\begin{equation}\label{eq:nablatm}
\begin{aligned}
&\Bigl(\partial_t +\nabla\phi\cdot\nabla
+\frac{1}{2} \Delta\phi - i\frac{\eps}{2}\Delta\Bigr) \nabla\tm+
\nabla\tm\cdot\nabla\nabla\phi+\frac{1}{2}\tm\nabla\Delta\phi 
\\
&\qquad+\frac{i}{\eps} \( \left| \tm \right|^{2\si} - \left|
  a\right|^{2\si}\) \nabla\tm 
+\frac{i}{\eps}\tm \nabla\( \left| \tm \right|^{2\si} - \left|
  a\right|^{2\si}\)=0.  
\end{aligned}
\end{equation}
This equation is of the form
$$
\bigl(\partial_t +
L(v,\partial_{x})+\mathcal{L}(\eps,\partial_{x})\bigr)\nabla\tm 
+ \frac{i}{\eps}\tm \nabla\( \left| \tm \right|^{2\si} - \left|
  a\right|^{2\si}\)=0, 
$$
where $\mathcal{L}(\eps,\partial_{x})$ is skew symmetric. Again, 
by multiplying~\eqref{eq:nablatm} by $\nabla \overline a^\eps$, taking
the real part and integrating over $\R^n$, we obtain 
\begin{align*}
&\frac{1}{2}\frac{d}{dt}\lA \nabla \tm\rA_{L^2}^2 
-\frac{1}{\eps}\int \DIV (\IM(\overline a^\eps\nabla \tm)) \( \left| \tm
\right|^{2\si} - \left| a\right|^{2\si}\)\\ 
&\quad=-\RE \int_{\R^n} \bigl( \nabla\tm\cdot\nabla\nabla\phi
+\frac{1}{2}\tm\nabla\Delta\phi\bigr)\nabla \overline a^\eps\, 
dx. 
\end{align*}
Together with the mass conservation, this yields the following
identity for the energy $E^\eps\defn \lA\tm\rA_{H^1}^2$:
\begin{equation*}
\frac{1}{2}\frac{d E^\eps}{dt} 
-\frac{1}{\eps}\int \DIV (\IM(\overline a^\eps\nabla \tm)) \( \left| \tm
\right|^{2\si} - \left| a\right|^{2\si}\) 
\le C_\phi E^\eps, 
\end{equation*}
for some constant $C_\phi$
depending only on the known solution $(a,\phi)$ of the limit system. 
The idea is then to find a second energy functional $\mathcal{E}^\eps$
such that 
\begin{equation}\label{secEnergy}
\frac{1}{2}\frac{d\mathcal{E^\eps}}{dt}
+\frac{1}{\eps}\int \DIV (\IM(\overline a^\eps\nabla \tm)) \( \left| \tm
\right|^{2\si} - \left| a\right|^{2\si}\) 
\le C_{a,\phi}(E^\eps+\mathcal{E}^\eps).
\end{equation}
By adding the two inequalities, one obtains a uniform in $\eps$ energy
estimate  
$$
E^\eps(t)+\mathcal{E}^\eps(t) \le  e^{C_{a,\phi}t}
(E^\eps(0)+\mathcal{E}^\eps(0)). 
$$
The previous strategy has many roots. For the semi-classical limit, this 
goes back to the work of Y.~Brenier~\cite{BrenierCPDE},
P.~Zhang~\cite{ZhangSIMA}, F.~Lin and P.~Zhang~\cite{LinZhang}, and is
referred to as  
a modulated energy estimate. Here, we will get the same
result in a different way. Our approach   
amounts to trying to find a nonlinear change of unknown to
symmetrize the equations.  
We will define $\tg$ and $\tq$ such that
\begin{equation*}
\partial_{t}\tq+\tg\DIV( \IM(\overline a^\eps\nabla\tm) ) 
+\nabla\phi\cdot\nabla\tq + \frac{\sigma+1}{2}\tq\Delta\phi =0,
\end{equation*}
and
$$
\tq\tg=\frac{1}{\eps}\(\left| \tm \right|^{2\si} - \left| a\right|^{2\si}\).
$$
Not only does this allow to obtain~\eqref{secEnergy} with
$\mathcal{E}^\eps\defn \lA \tq\rA_{L^2}^2$,  
but also to derive uniform estimates in Sobolev spaces. More
precisely, we will see that the system of equations satisfied by
$(\tm, \nabla \tm, \tq)$ is essentially hyperbolic symmetric
(plus some skew-symmetric terms). Therefore, we can derive energy
estimates, which in turn imply Theorem~\ref{theo:main}. Note that the
idea of introducing new unknown functions to diminish the complexity
of the initial problem is a strategy that has proven successful in many
occasions: for instance, blow-up for the nonlinear wave equation,
\cite{AlinhacAJM95} (see also \cite{Alinhac,AlinhacForges}), 
low Mach number limit of the full Navier-Stokes equations
\cite{ThomasARMA}, or 
geometric optics for the incompressible Euler or Navier-Stokes equations
\cite{CheverryCMP,CheverryBullSMF,CG05}.

\section{Preliminaries}
\label{sec:prelim}

Since, for $\sigma\in\N$, the nonlinearity in~\eqref{eq:nlssemi} is
smooth, the usual theorems for semi-linear evolution 
equations (see e.g. \cite{CazHar}) imply the following result.
\begin{lemma}\label{lemI1}
Let $\sigma,n \in\N\setminus\{0\}$. For (fixed) $\eps\in ]0,1]$, assume that
$u^\eps_{\mid t=0}\in~H^s(\R^n)$ with $s>n/2$. Then there 
exists $T^\eps$ such that \eqref{eq:nlssemi} 
has a unique maximal solution $u^\eps\in C([0,T^\eps[;H^{s}(\R^n))$: 
if $T^\eps<+\infty$, then 
\begin{equation}\label{continuation}
\lim _{t\rightarrow T^\eps}\lA u^\eps(t)\rA_{L^\infty(\R^n)}=+\infty.
\end{equation}
Consequently, if $u^\eps(0)\in H^\infty(\R^n)$, then $u^\eps\in
C^\infty([0,T^\eps[;H^\infty(\R^n))$. 
\end{lemma}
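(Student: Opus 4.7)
The plan is to apply the standard semilinear Schr\"odinger framework (see \cite{CazHar}) to the Duhamel formulation
\[
\tu(t) = e^{i\eps t \Delta/2}\tu(0) - \frac{i}{\eps}\int_0^t e^{i\eps(t-s)\Delta/2}\(|\tu(s)|^{2\si}\tu(s)\)\,ds.
\]
Two structural facts drive the argument: since $s>n/2$, the space $H^s(\R^n)$ is a Banach algebra and embeds into $L^\infty$; and since $\si\in\N$, the nonlinearity $F(z)\defn |z|^{2\si}z$ is a smooth polynomial in $(z,\bar z)$, hence locally Lipschitz from $H^s$ to $H^s$. Combined with the fact that the free propagator $e^{i\eps t \Delta/2}$ is an isometry of every $H^s$, a Picard contraction in $C([0,T^\eps];H^s)$ produces a unique solution on some interval $[0,T^\eps]$, with $T^\eps$ depending only on $\lA\tu(0)\rA_{H^s}$ and $\eps$. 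Gluing local solutions then yields a maximal $\tu\in C([0,T^\eps[;H^s)$.

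For the blow-up criterion \eqref{continuation}, I would argue by contradiction: suppose $T^\eps<+\infty$ while $M\defn \limsup_{t\to T^\eps}\lA\tu(t)\rA_{L^\infty}<+\infty$. Standard energy estimates, obtained by applying $(1-\Delta)^{s/2}$ to \eqref{eq:nlssemi}, pairing with $(1-\Delta)^{s/2}\overline{\tu}$ and taking the imaginary part (the dispersive term $\frac{\eps^2}{2}\Delta$ is skew-adjoint and drops out), lead via the Kato--Ponce tame inequality to
\[
\frac{d}{dt}\lA\tu\rA_{H^s}^2 \le \frac{C}{\eps}\lA\tu\rA_{L^\infty}^{2\si}\lA\tu\rA_{H^s}^2 \le \frac{CM^{2\si}}{\eps}\lA\tu\rA_{H^s}^2.
\]
Gr\"onwall's lemma then bounds $\lA\tu(t)\rA_{H^s}$ uniformly on $[0,T^\eps[$, so reapplying the local existence theorem at a time close to $T^\eps$ extends $\tu$ past $T^\eps$, contradicting maximality.

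Persistence of $H^\infty$ regularity follows because the existence time above depends on $\tu(0)$ only through $\lA\tu(0)\rA_{H^{s_0}}$ for any fixed $s_0>n/2$; hence when $\tu(0)\in H^\infty$ the same interval $[0,T^\eps[$ supports $\tu\in C([0,T^\eps[;H^s)$ for every $s\ge s_0$, giving $\tu\in C([0,T^\eps[;H^\infty)$. Equation \eqref{eq:nlssemi} then expresses $\partial_t\tu$ as a smooth function of $\tu$ and its spatial derivatives, and an induction on the order of $t$-differentiation yields $\tu\in C^\infty([0,T^\eps[;H^\infty)$. Since this is classical local well-posedness for a smooth semilinear equation, no step is genuinely hard; the one point where the hypothesis $\si\in\N$ enters essentially is the tame product estimate for $F(\tu)$, which is automatic for polynomial $F$ but would only hold in a weakened form if $\si$ were non-integer, $F$ being then merely H\"older near the origin.
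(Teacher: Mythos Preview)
Your proposal is correct and matches the paper's approach: the paper does not give its own proof of this lemma, but simply invokes the standard semilinear theory from \cite{CazHar}, relying on the smoothness of the nonlinearity $|u|^{2\si}u$ for $\si\in\N$. Your outline---Duhamel plus Picard in the algebra $H^s$ ($s>n/2$), the $L^\infty$ blow-up criterion via a tame (Kato--Ponce) estimate and Gronwall, and persistence of regularity from the fact that the local existence time depends only on a fixed low-order Sobolev norm---is exactly that standard argument.

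One small remark: your contradiction argument yields $\limsup_{t\to T^\eps}\lA\tu(t)\rA_{L^\infty}=+\infty$, which is a priori weaker than the $\lim$ written in \eqref{continuation}. This is harmless here, since the only way the lemma is used in the paper (Sections~\ref{sec:cubic} and~\ref{sec:si>2}) is through its contrapositive: a uniform $L^\infty$ bound on $[0,T)$ forces $T^\eps>T$, and your $\limsup$ version already gives that.
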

With regards to the limit system \eqref{eq:limite}, we recall 
the proof of Lemma~\ref{weaklemma:MUK}.
\begin{lemma}\label{lemma:MUK}
Let $\sigma\in\N$ and $n\ge 1$. For all $(\phi_{0},a_0)\in
H^{s+1}(\R^n)\times H^{s}(\R^n)$ with $s>n/2+1$,  
there exists $T^*>0$ such that \eqref{eq:limite} 
has a unique maximal solution $(\phi,a)$ in
$C([0,T^*[;H^{s+1}(\R^n)\times H^{s-1}(\R^n))$.  
In addition, if $\phi_{0},a_0\in H^\infty(\R^n)$, then $\phi,a\in
C^\infty([0,T^*[;H^\infty(\R^n))$. 
\end{lemma}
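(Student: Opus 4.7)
The plan is to symmetrize the system \eqref{eq:limite} via the Makino--Ukai--Kawashima substitution $v\defn\nabla\phi$ and $u\defn a^\sigma$ (with $a$ complex-valued, so that $|u|^2=|a|^{2\sigma}$), and then apply the classical theory of quasi-linear symmetric hyperbolic systems.

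First I compute the equations satisfied by $(v,u)$. Taking the gradient of the phase equation and using $|a|^{2\sigma}=|u|^2$ yields
\begin{equation*}
\d_t v + (v\cdot\nabla) v + \nabla|u|^2 = 0,
\end{equation*}
while multiplying the amplitude equation by $\sigma a^{\sigma-1}$ gives
\begin{equation*}
\d_t u + v\cdot\nabla u + \tfrac{\sigma}{2}\, u\,\DIV v = 0.
\end{equation*}
Writing $u=u_1+iu_2$ and rescaling $u_j\mapsto (2/\sqrt{\sigma})\, u_j$, the new unknown $(u_1,u_2,v)\in\R^{n+2}$ satisfies a quasi-linear symmetric hyperbolic system $\d_t U+\sum_j A_j(U)\d_j U=0$ with $A_j=A_j^*$ (the coupling constant $\sigma/2$ in the $u$-equation is matched to $2/\alpha^2$ in the $v$-equation by the rescaling). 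The very point of the nonlinear change $u=a^\sigma$ is to produce this symmetry: for $\sigma=1$ the system in $(\nabla\phi,a)$ is already symmetric, whereas for $\sigma\ge 2$ a direct symmetrization in those variables fails because the relevant coupling block vanishes like $|a|^{\sigma-1}$ on the vacuum.

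Next I apply the Friedrichs--Lax--Kato theory to this symmetric hyperbolic system with initial data $(\nabla\phi_0,a_0^\sigma)\in H^s(\R^n)$, which is admissible because $s>n/2+1$ makes $H^s$ an algebra and $\sigma\in\N$. This produces a unique maximal solution $(v,u)\in C([0,T^*[;H^s)$ with the usual continuation criterion. Since $(v\cdot\nabla)v+\nabla|u|^2=\nabla(\tfrac12|v|^2+|u|^2)$ whenever $\curl v=0$, irrotationality is propagated, so the definition
\begin{equation*}
\phi(t,x) \defn \phi_0(x) - \int_0^t \bigl(\tfrac12|v|^2+|u|^2\bigr)(s,x)\,ds
\end{equation*}
gives $\nabla\phi=v$ and the phase equation, with $\phi\in C([0,T^*[;H^{s+1})$. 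Finally, the amplitude is recovered as the unique solution of the linear transport equation
\begin{equation*}
\d_t a + \nabla\phi\cdot\nabla a + \tfrac12 a\,\Delta\phi=0,\qquad a_{\mid t=0}=a_0,
\end{equation*}
which yields $a\in C([0,T^*[;H^{s-1})$ by classical transport estimates; the loss of one derivative from $a_0\in H^s$ comes from the zero-order coefficient $\Delta\phi$ lying only in $H^{s-1}$, and matches the regularity in the statement. The consistency $|a|^{2\sigma}=|u|^2$ is automatic: both $u$ and $a^\sigma$ satisfy the same linear Cauchy problem, so uniqueness forces $u=a^\sigma$.

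The main obstacle, and the entire content of the MUK trick, is the vacuum degeneracy of the original system. Once $u=a^\sigma$ is introduced, the symmetrization is immediate, the hyperbolic theory is standard, and the step of returning to $a$ via a linear transport equation is routine; the propagation of $H^\infty$ data follows by iterating on $s$, and $C^\infty$ regularity in time by reading off successive time derivatives from the equations.
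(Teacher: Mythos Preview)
Your proof is correct and follows essentially the same route as the paper: introduce $v=\nabla\phi$ and $u=a^\sigma$, solve the resulting symmetric hyperbolic system, recover $\phi$ by time-integration of $\tfrac12|v|^2+|u|^2$, then obtain $a$ from the linear transport equation and check $u=a^\sigma$ by uniqueness. The only cosmetic differences are that the paper speaks of a constant symmetrizer rather than your explicit rescaling, and that it verifies $\nabla\phi=v$ directly via $\d_t(\nabla\phi-v)=0$ instead of first arguing that $\curl v$ is propagated; one point worth making explicit is that $T^*$ is independent of $s$ (tame estimates), which is what makes the ``iterate on $s$'' step for $H^\infty$ data legitimate.
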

\begin{remark} The lifespan $T^*$ is finite for all compactly support
  initial data (see Proposition~\ref{prop:blowup}). 
If $\sigma=1$, then $a$ belongs to
$C([0,T^*[;H^s(\R^d))$ as soon as 
$(\phi_{0},a_0)\in H^{s+1}(\R^n)\times H^{s}(\R^n)$. 
What makes the previous result non-trivial is the presence of vacuum
when $\si \ge 2$:
at the zeroes of $a$, \eqref{eq:limite} ceases to be hyperbolic, and
this may cause a loss of regularity. 
\end{remark}
\begin{proof}[Sketch of the proof]
One can transform \eqref{eq:limite} into a quasi-linear system by
differentiating the equation for $\phi$: with  
$v=\nabla\phi$, one has
\begin{equation}
  \label{II.2bis}
  \left\{
    \begin{aligned}
     &\d_t v +v\cdot \nabla v+ \nabla |a|^{2\sigma} =0\quad &&; \quad
     v_{\mid t =0}=\nabla\phi_{0},\\ 
     &\d_t a +v\cdot \nabla a +\frac{1}{2}a \DIV v = 0\quad &&;\quad
     a_{\mid t=0}=a_0. 
\end{aligned}
\right.
\end{equation} 
For the cubic case where $\sigma=1$, this system enters 
the standard framework of quasi-linear symmetric hyperbolic systems,
with a constant symmetrizer.  
Thus, one can solve the Cauchy problem~\eqref{eq:limite} in standard
fashion: one first solves  
\eqref{II.2bis} and then checks that $\curl v=0$, so that $v=\nabla
\phi$ for some $\phi$.  
By contrast, for $\sigma>1$, System~\eqref{II.2bis} is no longer
symmetric. However, as in~\cite{MUK86},  
one can prove that the Cauchy problem for \eqref{II.2bis} 
is well-posed, with loss 
of (at most) one derivative for $a$, by introducing
$A=a^\sigma$. Indeed, $(v,A)$ solves a quasi-linear 
hyperbolic system with constant symmetrizer:
\begin{equation}
  \label{II.2ter}
  \left\{
    \begin{aligned}
     &\d_t v +v\cdot \nabla v+ \nabla |A|^{2} =0\quad &&; \quad
     v_{\mid t =0}=\nabla\phi_{0},\\ 
     &\d_t A +v\cdot \nabla A +\frac{\sigma}{2}A
  \DIV v = 0\quad 
     &&;\quad  A_{\mid t=0}=a_0^\sigma. 
\end{aligned}
\right.
\end{equation} 
This allows us to determine~$v$, and hence $\phi$, by setting
\begin{equation*}
  \phi(t,x) = \phi_0(x) - \int_0^t\( \frac{1}{2}|v(\tau,x)|^2 +
  |A(\tau,x)|^2\) d\tau.
\end{equation*}
Then $\d_t \(\nabla \phi -v\)= \nabla \d_t \phi -\d_t v=0$, hence
$v=\nabla \phi$. 
Once this is
granted, one can define $a$ as the solution of  
the second equation in \eqref{II.2bis}, where $v$ is now viewed as a
given coefficient. 
Since $A$ and $a^\si$ satisfy the same linear equation, with identical
initial data, we obtain $A=a^\sigma$. Therefore, $(a,\phi)$ solves
\eqref{II.2bis}. Finally, the local existence 
time $T^*$ may be 
chosen independent of $s>n/2+1$, thanks to tame estimates (see
e.g. \cite{Taylor3}). 
\end{proof}
For further references, we conclude this paragraph by recalling a standard 
estimate in Sobolev spaces for systems of the form
\begin{equation}\label{system:hyp}
\partial_{t}U+ \sum_{1\le j\le n}A_{j}(\Phi,U)\partial_{j}U
+\eps \mathcal{L}(\partial_{x})U=E(\Phi,U), 
\end{equation}
where $U\colon [0,T]\times\R^n\to \C^d$ with $d\ge 1$, $\eps\in\R$ and: 
\begin{itemize}
\item $\Phi\colon [0,T]\times\R^n \to \C^d$ is a given function.
\item The $A_{j}$'s
are $d\times d$ Hermitian matrices depending smoothly on their
arguments. 
\item $\mathcal{L}(\partial_{x})=\sum L_{jk}\partial_j\partial_k$  
is a skew-symmetric second order differential operator with constant
coefficients. 
\item $E$ a $C^\infty$ function of its arguments, vanishing at the origin. 
\end{itemize}
\begin{lemma}\label{lemma:hyp}
Let $n\ge 1$ and $s>n/2+1$. 
There exists a smooth non-decreasing function $C$ from $[0,+\infty[$
to $[0,+\infty[$ such that,  
for all $T>0$, all $\eps\in\R$, all coefficient $\Phi\in
C([0,T];H^s(\R^n))$ and  
all unknown $U\in C([0,T];H^s(\R^n))$ satisfying 
\eqref{system:hyp}, there holds
\begin{equation*}
\sup_{t\in [0,T]}\lA U(t)\rA_{H^s}\le \lA U(0)\rA_{H^s} e^{C(M)T},
\end{equation*}
with $M\defn \lA \Phi\rA_{L^\infty([0,T];H^s(\R^n))}$.
\end{lemma}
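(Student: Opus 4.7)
The plan is a textbook symmetric hyperbolic energy estimate, the only nontrivial point being that the skew-symmetric term $\eps\mathcal{L}(\partial_x)U$ drops out of the estimate exactly, giving a bound that is uniform in $\eps$. I would work with $\mathcal{N}_s(t)^2 \defn \sum_{|\alpha|\le s}\lA \partial_x^\alpha U(t)\rA_{L^2}^2$, which is equivalent to $\lA U(t)\rA_{H^s}^2$. Applying $\partial_x^\alpha$ to \eqref{system:hyp} for each $|\alpha|\le s$, pairing in $L^2$ with $\partial_x^\alpha U$, summing in $\alpha$ and taking real parts yields
\begin{equation*}
\tfrac{1}{2}\tfrac{d}{dt}\mathcal{N}_s^2 = \sum_{|\alpha|\le s}\RE\Bigl\{-\sum_j\scal{\partial_x^\alpha(A_j\partial_j U)}{\partial_x^\alpha U}-\eps\scal{\mathcal{L}(\partial_x)\partial_x^\alpha U}{\partial_x^\alpha U}+\scal{\partial_x^\alpha E(\Phi,U)}{\partial_x^\alpha U}\Bigr\}.
\end{equation*}

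For the principal term I would write $\partial_x^\alpha(A_j\partial_j U) = A_j\partial_j\partial_x^\alpha U + [\partial_x^\alpha,A_j]\partial_j U$. The Hermitian character of $A_j$ together with one integration by parts converts the first piece into $\tfrac12\scal{(\partial_j A_j)\partial_x^\alpha U}{\partial_x^\alpha U}$, which by the chain rule and the Sobolev embedding $H^{s-1}\hookrightarrow L^\infty$ (using $s>n/2+1$) is controlled by $C(M,\mathcal{N}_s)\mathcal{N}_s^2$. The commutator is handled by the Moser--Kato--Ponce tame inequality together with the composition estimate $\lA A_j(\Phi,U)\rA_{H^s}\le C(M+\mathcal{N}_s)$, again contributing $C(M,\mathcal{N}_s)\mathcal{N}_s^2$. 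The $\eps$-term is where the hypotheses on $\mathcal{L}$ play their role: since $\mathcal{L}(\partial_x)$ has constant coefficients it commutes with $\partial_x^\alpha$, and since it is a skew-symmetric second-order operator the pairing $\scal{\mathcal{L}(\partial_x)w}{w}$ is purely imaginary for every $w\in H^2(\R^n)$, so its real part vanishes identically and the $\eps$-contribution disappears from the energy balance. Finally, $E(\Phi,U)$ is controlled by the Moser composition bound $\lA E(\Phi,U)\rA_{H^s}\le C(M+\mathcal{N}_s)(M+\mathcal{N}_s)$, valid because $E$ is smooth with $E(0,0)=0$ and $H^s$ is an algebra for $s>n/2$.

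Assembling these bounds yields a differential inequality of the form $\frac{d}{dt}\mathcal{N}_s^2\le C(M,\mathcal{N}_s)\mathcal{N}_s^2$. A standard continuation/bootstrap, carried out on the interval on which $\mathcal{N}_s$ stays within a prescribed multiple of $\mathcal{N}_s(0)$, reduces the coefficient to a constant depending only on $M$ (and the size of $\mathcal{N}_s(0)$), and Gronwall's lemma then delivers the claimed exponential bound on $[0,T]$. The main technical obstacle is the commutator $[\partial_x^\alpha,A_j(\Phi,U)]\partial_j U$: to avoid a loss of derivatives one must use the tame form of the Moser estimate together with the fact that $\partial_j U$ can be placed in $L^\infty$, which is precisely what the hypothesis $s>n/2+1$ ensures. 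The other two ingredients — symmetrisability provided by the Hermitian hypothesis on $A_j$, and the algebraic cancellation of the $\eps$-term from skew-symmetry — are structural and automatic.
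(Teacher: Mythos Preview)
Your approach is essentially the paper's: an energy estimate exploiting the Hermitian symmetry of the $A_j$ and the skew-symmetry of $\mathcal{L}(\partial_x)$, followed by commutator/Moser estimates and Gronwall. Two technical differences are worth noting. First, the paper works with $\Lambda^s=(\id-\Delta)^{s/2}$ rather than $\partial_x^\alpha$, so that non-integer $s>n/2+1$ are covered, as the statement allows. Second, it inserts Friedrichs mollifiers $J_\delta$ so that the energy computation is rigorous for $U$ merely in $C([0,T];H^s)$ --- your differentiation and pairing are formal at top order, since $\partial_j\partial_x^\alpha U$ with $|\alpha|=s$ need not lie in $L^2$.

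One substantive wrinkle: the bootstrap you sketch at the end does not deliver a constant depending only on $M=\lA\Phi\rA_{L^\infty_t H^s}$ and valid for \emph{all} $T$. Freezing $\mathcal{N}_s\le 2\mathcal{N}_s(0)$ only works on a short interval whose length depends on $\mathcal{N}_s(0)$, so it cannot yield the bound on an arbitrary $[0,T]$. In fact the paper's own proof produces exactly what yours does before the bootstrap, namely a bound with constant $C\bigl(\lA(\Phi,U)\rA_{L^\infty_t H^s}\bigr)$ depending on the $H^s$-norm of $U$ as well. This is how the lemma is actually applied later (see the definitions of $M^\eps$ in \S\ref{sec:cubic} and in Proposition~\ref{prop:quasilin}, which always include $\lA U^\eps\rA_{L^\infty_t H^s}$); the lemma statement as written is slightly imprecise on this point. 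So drop the bootstrap and record the conclusion with the constant depending on $\lA(\Phi,U)\rA_{L^\infty_t H^s}$ --- that is both what you can prove and what is needed.
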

\begin{proof}
We want to estimate the $L^2(\R^n)$ norm of $\Lambda^{s}U$, 
where $\Lambda^{s}$ is the Fourier multiplier
$(\id-\Delta)^{s/2}$. To deal with smooth functions, we use the 
Friedrichs mollifiers: let $\jmath\in C^\infty_{0}(\R^n)$ be such that
$\jmath(\xi)=1$ for $|\xi|\le 1$, then we  
define $J_{\delta}=\jmath(\delta D_{x})$ as the Fourier multiplier
with symbol $\jmath(\delta\xi)$.  

With these notations, set $U_{\delta}\defn
J_{\delta}\Lambda^{s}U$. Since $s-1>n/2$,  
$H^{s-1}(\R^n)$ is an algebra which is stable by composition ($F(u)\in
H^{s-1}(\R^n)$ whenever $u\in H^{s-1}(\R^n)$ and  
$F\in C^\infty$ satisfies $F(0)=0$):
$U\in~C^1([0,T];H^{s-2}(\R^n))$. Therefore, $U_{\delta}$ is smooth:
$U_{\delta}\in~C^1([0,T];H^\infty(\R^n))$. Now write 
$$
\partial_{t}U_{\delta}
+\sum_{1\le j\le n}
A_{j}(\Phi,U)\partial_{j}U_{\delta}+\eps\mathcal{L}(\partial_{x})
U_{\delta}=f_\delta, 
$$
with 
$$
f_\delta= \sum_{1\le j\le
  n}[A_{j}(\Phi,U),J_{\delta}\Lambda^s]\partial_{j}U+ 
J_{\delta}\Lambda^s E(\Phi,U). 
$$ 
Since $\mathcal{L}(\partial_{x})=-\mathcal{L}(\partial_{x})^{*}$, and
since  $U_{\delta}\in C^{1}([0,T];L^2(\R^n))$, by taking the inner
product in $L^2(\R^n)$,  we get
\begin{align*}
\frac{d}{dt}
\norm{U_{\delta}}_{L^{2}}^{2}
&=\sum_{1\le j\le
  n}\scal{\partial_{j}A_{j}(\Phi,U)U_{\delta}}{U_{\delta}} 
+2\scal{f_\delta}{U_{\delta}} \\
&\le \Big(1+\sum_{1\le j\le n} \lA
\partial_{j}A_{j}(\Phi,U)\rA_{L^\infty} \Bigr) 
\bigl\Vert U_{\delta}\bigr\rVert_{L^2}^2 +\lA f_\delta\rA_{L^2}^2, 
\end{align*}
where we have used the symmetry of the matrices $A_{j}$. 
The Sobolev embedding and the usual nonlinear estimates (see
\cite{Taylor3}) imply  
\begin{align*}
\lA \partial_{j}A_{j}(\Phi,U)\rA_{L^{\infty}}
&\le C(\lA (\Phi,U)\rA_{W^{1,\infty}})\le 
C(\lA (\Phi,U)\rA_{H^{s}}),\\
\lA [A_{j}(\Phi,U),J_{\delta}\Lambda^{s}]\partial_{j}U\rA_{L^{2}} 
&\le K \norm{\widetilde{A}_{j}(\Phi,U)}_{H^{s}}
\lA \partial_{j}U\rA_{H^{s-1}}\le C(\lA (\Phi,U)\rA_{H^{s}}),\\
\lA  J_{\delta}\Lambda^s E(\Phi,U)\rA_{L^2}&\le K \lA E(\Phi,U) \rA_{H^s}
\le C(\lA (\Phi,U)\rA_{H^{s}}),
\end{align*}
where $\widetilde{A}_{j}=A_{j}-A_{j}(0)$ and $C$ denotes a smooth
non-decreasing function independent of $\delta$.  
To complete the proof, apply Gronwall lemma and let $\delta$ go to
$0$ in the inequality  
thus obtained.
\end{proof}

\section{Proof of Theorem~\ref{theo:main} in the case $\si=1$}
\label{sec:cubic}
Recall that $\tm$ is defined as:
\begin{equation*}
\tm(t,x)\defn u^\eps(t,x) e^{-i\phi(t,x)/\eps},
\end{equation*}
where $\phi\in C^\infty([0,T^*[\times\R^n)$ is given by \eqref{eq:limite}. 
Assume in the rest of this paragraph that
$\si=1$. Then, \eqref{eq:ae} reads
\begin{equation*}
\left\{
  \begin{aligned}
&  \partial_t \tm +\nabla\phi\cdot\nabla \tm
+\frac{1}{2}\tm \Delta\phi
- i\frac{\eps}{2}\Delta \tm
=-\frac{i}{\eps}\( \left| \tm \right|^{2} - \left|a\right|^{2}\)
\tm . \\
&\tm_{\mid t=0}= a_0^\eps.  
  \end{aligned}
\right.
\end{equation*}
Let $s>n/2+1$ and set $\tau^\eps\defn \min (T^*,T^\eps)$, where $T^*$
and $T^\eps$ are   
given by Lemmas~\ref{lemI1} and~\ref{lemma:MUK}. 
We prove that there exists a function~$C$ from $[0,+\infty[$ to $[0,+\infty[$ 
such that, 
for all~$\eps\in ]0,1]$ and all~$t\in [0,\tau^\eps[$,
\begin{equation}\label{Us1}
\lA \tm (t) \rA_{H^s}\le \lA \tm(0)\rA_{H^s}e^{t C(M^\eps(t))},
\end{equation}
where 
$$
M^\eps(t)\defn \lA \tm\rA_{L^\infty([0,t];H^s(\R^n))} 
+ \lA (a,\phi)\rA_{L^\infty([0,t];H^{s+3}(\R^n))}.
$$
This suffices to conclude by a standard continuity argument. 
Indeed, set 
$$
M_0\defn  \sup_{\eps \in
  ]0,1]} \lA a_0^\eps\rA_{H^s} + \lA
(a,\phi)\rA_{L^\infty([0,T^{*}/2];H^{s+3}(\R^n))}<+\infty,  
$$
and choose $T_{0}\in ]0,T^*/2]$ small so that 
$M_{0}\exp(T_{0}C(2M_{0}))<2M_{0}$. Since $M^{\eps}(0)<2M_{0}$ and since 
$M^\eps\in C^0([0,\tau^\eps[)$, \eqref{Us1}  implies 
\begin{equation*}
  M^\eps(t) <2M_{0},\quad \forall t\in [0,\min\{T_{0},T^\eps\}[. 
\end{equation*}
Sobolev embedding then shows that $\lA \tu (t)
\rA_{L^\infty}=\lA \tm (t) \rA_{L^\infty}$ is uniformly bounded for all 
$\eps\in ]0,1]$ and all $t\in [0,\min\{T_{0},T^\eps\}[$.  
Hence, the continuation principle~\eqref{continuation} 
implies that $T^\eps\ge T_0>0$ for all $\eps\in ]0,1]$. The estimate
\eqref{eq:uniform}  
with $\si=1$ then follows from the bound $\displaystyle \sup_{\eps\in]0,1]}
\sup_{t\in[0,T_0]}M^\eps(t)\le 2M_{0}$.

\smallbreak

Theorem~\ref{theo:main} for $\si=1$ was first established by
E.~Grenier in \cite{Grenier98}, whose approach  
is based on a subtle phase/amplitude representation of the solution. 
Here, we give an alternate proof which consists 
in symmetrizing the large terms in $\eps^{-1}$ in the equation for
$\tm$ by introducing  
$$
q^\eps \defn \frac{\la\tm\ra^2-\la a\ra^2}{\eps}\cdot
$$ 
We find directly, in view of Assumption~\ref{assu:general}: 
\begin{equation*}
\partial_{t}\tq + \DIV \(\IM \(\overline a^\eps\nabla\tm\)\) + \cn
(\tq\nabla\phi)=0\ ;\  \|q^\eps_{\mid
  t=0}\|_{H^s(\R^n)}=\O(1),\ \forall s\ge 0.
\end{equation*}
Furthermore, with this notation the equations for $\tm$ and
$\tw\defn\nabla\tm$ read 
\begin{equation*}
\left\{
  \begin{aligned}
   &\partial_t \tm +\nabla\phi\cdot\nabla \tm
+\frac{1}{2}\tm \Delta\phi
- i\frac{\eps}{2}\Delta \tm
+i q^\eps \tm=0,\\
&\partial_t \tw +\nabla\phi\cdot\nabla \tw
+\frac{1}{2}\tw \Delta\phi +
\tw\cdot\nabla\nabla\phi+\frac{1}{2}\tm\nabla\Delta\phi  \\
&\qquad + i q^\eps \tw + i\tm \nabla q^\eps=i\frac{\eps}{2}\Delta
\tw.  
  \end{aligned}
\right.
\end{equation*}
It is easily verified that $U^\eps\defn
(2q^\eps,\tm,\overline a^\eps,\tw,\overline\tw)\in
C^\infty([0,\tau^\eps[;H^\infty(\R^n))$  
satisfies a system of the form~\eqref{system:hyp}, 
that is
$$
\partial_{t}U^\eps+ \sum_{1\le j\le n}A_{j}(\Phi,U^\eps) 
\partial_{j}U^\eps +\eps L(\partial_{x})U^\eps=E(\Phi,U^\eps), 
$$
where $\Phi\defn (\nabla\phi,\Delta\phi,\nabla\Delta\phi)$. 
Hence, by Lemma~\ref{lemma:hyp}, 
we obtain the desired estimate~\eqref{Us1} and conclude the proof 
of Theorem~\ref{theo:main} in the case $\si=1$.

\section{The case $\si\ge 2$}
\label{sec:reste}

We now follow the strategy presented in \S\ref{sec:scheme}. We introduce a
nonlinear change of unknown functions which, together with
\eqref{eq:ae}, yields a quasi-linear system of the form
\eqref{system:hyp}. We conclude the proof of Theorem~\ref{theo:main}
and Proposition~\ref{prop:quad} 
thanks to a general result on the composition by non-smooth functions
in Sobolev spaces.

\subsection{A nonlinear change of
  variable}\label{sec:nonlinear}
As already explained, to symmetrize the equations, 
our idea is to split the term $\left| a^\eps \right|^{2\si} -
\left|\ta\right|^{2\si}$ as a product  
\begin{equation*}
 \left| a^\eps \right|^{2} -
\left|\ta\right|^{2\si}= \tg\tbeta=(G\, B)( \left| a^\eps \right|^{2},
\left|\ta\right|^{2}) =
  G(r_1,r_2)B(r_1,r_2)\big|_{(r_1,r_2)=( \left| a^\eps
    \right|^{2},\left|\ta\right|^{2}) }, 
\end{equation*}
where $\tbeta$ satisfies an equation of the form 
\begin{equation}\label{eq:betaasdesired}
\partial_{t}\tbeta+ L(\ta,\tphi,\partial_{x})\tbeta+\tg \DIV \(
\eps\IM(\overline{a}^\eps\nabla a^\eps)\)=0, 
\end{equation}
and $L$ is a first order differential operator. 
Proposition~\ref{prop:betag} below shows that it is possible to do so. 
Before giving this precise statement, we introduce convenient
notations, and explain  how to formally find $\tbeta$. 

\smallbreak

Introduce the position densities
\begin{equation*}
    \tp\defn |a|^2 \in C^\infty([0,T^*[\times \R^n)\quad ;\quad \trho\defn
    |a^\eps|^2 =\la \tu\ra^2  
    \in C^\infty([0,T^\eps[\times\R^n).
\end{equation*}
Let $\tv = \nabla \phi$. Elementary computations show
that:
\begin{align}
&\partial_{t}\tp + \DIV (\tp\tv)=0,\label{eq:r1}\\
&\partial_{t}\trho + \DIV \IM\(\eps \overline{u}^\eps\nabla \tu\)
=0,\label{eq:r2}\\ 
&\partial_{t}\trho + \DIV \(\IM ( \eps\overline{a}^\eps\nabla a^\eps) +\trho
\tv\)=0.\label{eq:r3} 
\end{align}
Denote
$$
\tJ\defn \eps \IM (\overline a^\eps\nabla  a^\eps).
$$
By writing
$$
\partial_{t}\tbeta = (\partial_{r_{1}}B)(\trho,\tp)\partial_{t}\trho
+(\partial_{r_{2}}B)(\trho,\tp)\partial_{t}\tp,
$$
we compute, from \eqref{eq:r1} and \eqref{eq:r3}:
$$
\partial_{t}\tbeta + (\partial_{r_{1}}B)(\trho,\tp)\DIV(\tJ+\trho \tv) 
+ (\partial_{r_{2}}B)(\trho,\tp)\DIV(\tp \tv)=0.
$$
Hence, in order to have an equation of the desired form
\eqref{eq:betaasdesired}, we impose 
$$
\partial_{r_{1}}B(r_{1},r_{2}) = G(r_{1},r_{2}).
$$
Since on the other hand, 
$$
G(r_{1},r_{2})B(r_{1},r_{2}) = r_{1}^\sigma-r_{2}^\sigma, 
$$
this suggests to choose $\tbeta$ such that
\begin{equation}\label{B0}
(\tbeta)^2 =
\frac{2}{\sigma+1}(\trho)^{\sigma+1}-2\tp^\sigma\trho+f(\tp). 
\end{equation}
To obtain an operator $L$ which is linear with respect to $\tbeta$ we choose
\begin{equation}\label{B1}
(\tbeta)^2 =
\frac{2}{\sigma+1}(\trho)^{\sigma+1}-\frac{2}{\sigma+1}\tp^{\sigma+1} 
-2\tp^\sigma(\trho-\tp).
\end{equation}
With this choice, we formally compute:
\begin{equation*}
\partial_{t}\beta^{\eps} +\eps \tg\DIV( \IM(\overline{ a}^\eps\nabla
a^\eps) )  
+\tv\cdot\nabla\tbeta + \frac{\sigma+1}{2}\tbeta\DIV \tv =0.
\end{equation*}
Before deriving this equation rigorously, examine
the right hand side of \eqref{B1}. Taylor's formula yields 
$$
\frac{2}{\sigma+1}(\trho)^{\sigma+1}-\frac{2}{\sigma+1}\tp^{\sigma+1}
-2 \tp^\sigma(\trho-\tp) = (\trho-\tp)^2 Q_{\sigma}(\trho,\tp),
$$
where $Q_{\sigma}$ is given by:
\begin{equation}\label{eq:defQ}
  Q_{\sigma}(r_{1},r_{2})\defn
2\si \int_0^1 (1-s) \( r_2 +s (r_1-r_2)\)^{\si -1}
ds. 
\end{equation}
Note that there exists $C_\si$ such that:
\begin{equation}\label{eq:Qpardessous}
  Q_{\sigma}(r_{1},r_{2}) \ge C_\si \( r_1^{\si -1} + r_2^{\si -1}\). 
\end{equation}
\begin{notation}\label{nota:GBP}
Let $\sigma\in\N$. Introduce
$$
G_\si(r_1,r_2)=\frac{P_{\sigma}(r_1,r_2)}{\sqrt{Q_{\sigma}(r_1,r_2)}}\quad
;  \quad 
B_\si(r_1,r_2)\defn (r_1-r_2) \sqrt{Q_{\sigma}(r_1,r_2)},
$$
where $Q_\si$ is given by \eqref{eq:defQ} and
$$
P_{\sigma}(r_{1},r_{2})=\frac{r_{1}^\si-r_{2}^\si}{r_{1}-r_{2}}
=\sum_{\ell=0}^{\sigma-1}r_{1}^{\sigma-1-\ell}r_{2}^{\ell}. 
$$
\end{notation}
\begin{example}
For $\sigma=1,2,3$, we compute
\begin{alignat*}{2}
&G_{1}=1, \qquad  &&  B_{1}=r_{1}-r_{2}.\\ 
&G_{2}=\sqrt{\frac{3}{2}}\frac{r_{1}+r_{2}}{\sqrt{r_{1}+2r_{2}}}\virgp
\qquad && 
B_{2}=\sqrt{\frac{2}{3}}(r_{1}^2-r_{2}^2)\sqrt{r_{1}+2r_{2}}.\\
&G_{3}=\sqrt 2\frac{r_{1}^2+r_{1}r_{2}+r_{2}^2}{\sqrt{(r_{1}-r_{2})^2+
2r_{2}^2}}\virgp 
\qquad 
&& B_{3}=\frac{1}{\sqrt 2}(r_{1}^2-r_{2}^2)\sqrt{(r_{1}-r_{2})^2+2r_{2}^2}.
\end{alignat*}
\end{example}

A remarkable fact is that, although the functions $G_{\si}$ and
$B_\si$ are not smooth for $\sigma\ge 2$,  
one can compute an evolution equation for the unknown 
$\tbeta\defn B_\si\bigl(\la a^\eps\ra^2,\la\ta\ra^2\bigr)$. We have
the following key proposition. 
\begin{proposition}\label{prop:betag}
With $G_{\si}$ and $B_{\si}$ as above, define 
$$
\tbeta\defn B_\si\bigl(\la a^\eps\ra^2,\la\ta\ra^2\bigr),\qquad  
\tg\defn G_\si\bigl(\la a^\eps\ra^2,\la\ta\ra^2\bigr).
$$
Then $\tbeta\in
C^1([0,\tau^\eps[\times\R^n)$ and $\tg \in
C^0([0,\tau^\eps[\times\R^n)$, where $\tau^\eps=\min
(T^*,T^\eps)$. Moreover, 
\begin{equation}\label{eq:12h25}
\partial_{t}\tbeta +\eps \tg\DIV( \IM(\overline{ a}^\eps\nabla a^\eps) ) 
+\tv\cdot\nabla\tbeta + \frac{\sigma+1}{2}\tbeta\DIV \tv =0.
\end{equation}
\end{proposition}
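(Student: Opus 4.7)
The plan is to establish \eqref{eq:12h25} first at the smooth level of $\tbeta^2$ and then to descend to $\tbeta$ itself, dividing by $2\tbeta$ on $\{\tbeta\neq 0\}$ and extending by continuity. Note that by \eqref{eq:defQ} and direct expansion, $\tbeta^2 = \tfrac{2}{\si+1}r_1^{\si+1}-2r_2^\si r_1+\tfrac{2\si}{\si+1}r_2^{\si+1}$ is a polynomial in $r_1:=|a^\eps|^2$ and $r_2:=|a|^2$, hence smooth in $(t,x)$.

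The cornerstone is the polynomial identity
\begin{equation*}
r_1 P_\si(r_1,r_2) - \si r_2^\si = \frac{\si+1}{2}(r_1-r_2)Q_\si(r_1,r_2),
\end{equation*}
which follows directly from Euler's relation for the homogeneous polynomial $\tbeta^2$ of degree $\si+1$, combined with the computations $\partial_{r_1}(\tbeta^2) = 2(r_1-r_2)P_\si$ and $\partial_{r_2}(\tbeta^2) = -2\si r_2^{\si-1}(r_1-r_2)$. Using \eqref{eq:r1}, \eqref{eq:r3}, the chain rule, the splitting $\DIV(r_i\tv)=\tv\cdot\nabla r_i + r_i\DIV\tv$, and this identity, I would derive
\begin{equation*}
\partial_t(\tbeta^2) + \tv\cdot\nabla(\tbeta^2) + (\si+1)\tbeta^2\DIV\tv + 2(r_1-r_2)P_\si\,\DIV\tJ = 0,
\end{equation*}
with $\tJ=\eps\IM(\overline{a^\eps}\nabla a^\eps)$. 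Since $(r_1-r_2)P_\si = B_\si G_\si$ as polynomials, this equation is precisely $2\tbeta$ times \eqref{eq:12h25}.

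For the regularity, on the open set $\Omega=\{Q_\si(r_1,r_2)>0\}$ both $B_\si$ and $G_\si$ are $C^\infty$, and so are $\tbeta$ and $\tg$ in $(t,x)$. On $\Omega^c$, \eqref{eq:Qpardessous} forces $a^\eps = a = 0$; since these are $C^\infty$, one has $r_1,r_2 = O(d^2)$ where $d$ denotes the distance to a common zero. The elementary bounds $|P_\si|, Q_\si\le C(r_1+r_2)^{\si-1}$, combined with $|r_1-r_2|\le r_1+r_2$ and \eqref{eq:Qpardessous}, yield $|\tg|\le C(r_1+r_2)^{(\si-1)/2}=O(d^{\si-1})$ and $|\tbeta|\le C|r_1-r_2|(r_1+r_2)^{(\si-1)/2}=O(d^{\si+1})$. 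Since $\si+1\ge 3$, this shows $\tbeta\in C^1$ with vanishing first derivatives on $\Omega^c$, and $\tg\in C^0$ vanishing there.

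Finally, \eqref{eq:12h25} follows by continuity of both sides: it holds on $\Omega\cap\{\tbeta\neq 0\}$ by dividing the $\tbeta^2$ equation by $2\tbeta$; it extends to $\Omega\cap\{r_1=r_2\}$ using the further identity $P_\si(r,r)=Q_\si(r,r)=\si r^{\si-1}$; and it is trivially $0=0$ on $\Omega^c$ because $\tbeta,\nabla\tbeta,\partial_t\tbeta,\tg$ all vanish there. The hardest step will be the $C^1$ regularity of $\tbeta$ at simultaneous zeros of $a^\eps$ and $a$: the lower bound \eqref{eq:Qpardessous} is essential, as it controls the singularity of $\sqrt{Q_\si}$ against the vanishing of the factor $(r_1-r_2)$.
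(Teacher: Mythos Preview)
Your approach is essentially the paper's: both compute at the level of $(\tbeta)^2$ (equivalently, $\tbeta\,\partial_t\tbeta$) using the conservation laws \eqref{eq:r1} and \eqref{eq:r3}, obtain $\tbeta\times(\text{LHS of \eqref{eq:12h25}})=0$, divide by $\tbeta$ on $\{\tbeta\neq0\}$, and treat the zero set separately. The paper decomposes $\{\tbeta=0\}=\{\trho=\tp\}$ into the closure of its complement and its interior, and on the interior shows $\DIV\tJ=0$ directly from the difference of \eqref{eq:r1} and \eqref{eq:r3}. Your pointwise verification on $\{\trho=\tp>0\}$ via $P_\si(r,r)=Q_\si(r,r)$ (so that $\tg=\sqrt{Q_\si}$ there, matching the factor appearing in $\partial_t\tbeta=\partial_t(\trho-\tp)\sqrt{Q_\si}$ and $\nabla\tbeta=\nabla(\trho-\tp)\sqrt{Q_\si}$, after which the subtracted conservation law kills the bracket) is a clean alternative that avoids the closure/interior split.

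There is one genuine gap in your regularity argument. The bound $|\tbeta|=O(d^{\si+1})$ alone does \emph{not} imply $\tbeta\in C^1$ at $\Omega^c$: take $f(d)=d^{3}\sin(d^{-2})$ as a counterexample. You must also show that $\nabla\tbeta$ and $\partial_t\tbeta$, computed on $\Omega$, tend to zero as one approaches $\Omega^c$. This follows from the same ingredients: since $|\nabla r_i|\le C\sqrt{r_i}$ (as $r_i=|\cdot|^2$ with smooth argument), homogeneity gives $|\nabla Q_\si|\le C(r_1+r_2)^{\si-3/2}$, and combining with \eqref{eq:Qpardessous} and $|r_1-r_2|\le r_1+r_2$ one finds $|\nabla\tbeta|\le C(r_1+r_2)^{\si/2}\to0$. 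A slicker packaging: the map $(z_1,z_2)\in\R^4\mapsto B_\si(|z_1|^2,|z_2|^2)$ is smooth away from the origin and homogeneous of degree $\si+1\ge3$, hence lies in $C^{\si}(\R^4)$; composing with the $C^\infty$ maps $a^\eps,a$ yields $\tbeta\in C^\si\subset C^1$, and the analogous statement for $G_\si$ (degree $\si-1$) gives $\tg\in C^{\si-2}\subset C^0$.
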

\begin{remark}
Again, note the dichotomy between $\sigma=1$ and $\sigma\ge 2$. 
If $\sigma=1$ then, by definition, $\tg=1$ and $\tbeta=\rho^\eps-\rho$
are $C^\infty$ functions. Moreover  
\eqref{eq:12h25} simply reads
\begin{equation*}
\partial_{t}\tbeta +\eps \DIV( \IM(\overline{ a}^\eps\nabla a^\eps) ) 
+\DIV (\tv\tbeta)=0,
\end{equation*}
corresponding to the equation for $\tq=\eps^{-1}\tbeta$ in
Section~\ref{sec:cubic},  
and which follows directly by subtracting \eqref{eq:r1} from
\eqref{eq:r3}. 
\end{remark}
\begin{proof}
The regularity properties of $\tbeta$ and $\tg$ follow from
Lemmas~\ref{lemI1} and~\ref{lemma:MUK}, along with the definition of $\tbeta$
and $\tg$ (see Notation~\ref{nota:GBP}, and
\eqref{eq:Qpardessous}).  
\smallbreak

Since by definition
\begin{align*}
 &\tbeta(\partial_{r_{1}}B_\si)(\trho,\tp)=(\trho)^\sigma-\tp^\sigma, \\
&\tbeta(\partial_{r_{2}}B_\si)(\trho,\tp)=\sigma
(\rho^\sigma-\tp^{\sigma-1}\rho^\eps), 
\end{align*}
we have
\begin{align*}
&\tbeta\partial_{t}\tbeta \\ 
&= \tbeta(\partial_{r_{1}}B_\si)(\trho,\tp)\partial_{t}\trho
+\tbeta(\partial_{r_{2}}B_\si)(\trho,\tp)\partial_{t}\tp\\
&=-\tbeta(\partial_{r_{1}}B_\si)(\trho,\tp)\DIV(J_{\eps}+\trho \tv) 
-\tbeta(\partial_{r_{2}}B_\si)(\trho,\tp)\DIV(\tp \tv)\\
&=-((\trho)^\sigma-\tp^\sigma)\DIV(J_{\eps}+\trho \tv)
-\sigma (\rho^\sigma-\tp^{\sigma-1}\rho^\eps)\DIV(\tp \tv).
\end{align*}
From this we compute
$$
\tbeta\Bigl(\partial_{t}\beta^{\eps} +\eps g^\eps\DIV(
\IM(\overline{ a}^\eps\nabla a^\eps) )  
+\tv\cdot\nabla\tbeta + \frac{\sigma+1}{2}\tbeta\DIV \tv\Bigr) =0.
$$
Introduce
\begin{align*}
\tomega  \defn &\{ \trho=\tp\}=\bigl\{ (t,x)\in
[0,\tau^\eps[\times\R^n\, \arrowvert \, 
\trho(t,x) = \tp(t,x) \,\bigr\} \\
= &\([0,\tau^\eps[\times\R^n\)\setminus \{
\tbeta\neq 0\}
\qquad (\text{by }\eqref{eq:Qpardessous}).
\end{align*}
Then \eqref{eq:12h25} holds on $
\([0,\tau^\eps[\times\R^n\)\setminus 
\tomega$, and hence on $\overline{\([0,\tau^\eps[\times\R^n\)\setminus 
\tomega}$ by continuity. To prove the proposition, it thus suffices to show
$$
\partial_{t}\beta^{\eps} +\eps g^\eps\DIV( \IM(\overline 
  a^\eps\nabla a^\eps) )  
+\tv\cdot\nabla\tbeta + \frac{\sigma+1}{2}\tbeta\DIV \tv
=0\quad\text{on}\quad \itomega ,
$$
where $\overset{\circ}{A}$ denotes the interior of the set $A$. 
Since $\tbeta=0$ on $\itomega$, it is enough to prove 
that $\DIV( \IM(\overline a^\eps\nabla a^\eps))=0$ on $\itomega$. This in
turn follows from  \eqref{eq:r1} and \eqref{eq:r3}, which yield:
$$
\DIV( \IM(\overline a^\eps\nabla a^\eps)) = -\eps^{-1}\Bigl(
\partial_{t}(\trho-\tp) +\DIV ( (\trho-\tp)\tv)\Bigr).
$$
This completes the proof.
\end{proof}

We will see that
$\left|  a^\eps \right|^{2\si} - \left|\ta\right|^{2\si}$ is of order 
$\O(\eps)$, so we naturally set
\begin{equation*}
  \tw\defn \nabla a^\eps \quad ;\quad\tq\defn \eps^{-1}\tbeta.
\end{equation*}
We infer from the previous computations that $(a^\eps, \tw,\tq)$
solves: 
\begin{equation}\label{eq:systprefinal}
\left\{
\begin{aligned}
&  \partial_t  a^\eps +\tv\cdot\nabla  a^\eps
+\frac{1}{2} a^\eps \DIV \tv
- i\frac{\eps}{2}\Delta  a^\eps
=-i \tg \tq  a^\eps . \\
&\partial_t \tw +\tv\cdot\nabla \tw
+\frac{1}{2}\tw\DIV \tv +
\tw\cdot\nabla \tv+\frac{1}{2} a^\eps\nabla\DIV \tv -
i\frac{\eps}{2}\Delta \tw =\\ 
&\qquad = -i  \tq \nabla \(a^\eps \tg\) -i a^\eps \tg \nabla \tq,\\ 
&\partial_{t}\tq +\tv\cdot\nabla \tq + \tg\DIV( \IM(\overline a^\eps\tw) ) 
+ \frac{\sigma+1}{2} \tq\DIV \tv =0.
\end{aligned}
\right.
\end{equation}
Simply by writing 
$$
\tg\DIV( \IM(\overline a^\eps\tw) ) = \IM(\tg \overline a^\eps\DIV \tw),
$$
we can rewrite the previous system as
\begin{equation}\label{eq:systfinal}
\left\{
\begin{aligned}
&  \partial_t  a^\eps +\tv\cdot\nabla  a^\eps
- i\frac{\eps}{2}\Delta  a^\eps
=-\frac{1}{2} a^\eps \DIV \tv-i \tg \tq  a^\eps . \\
&\partial_t \tw +\tv\cdot\nabla \tw
+i a^\eps \tg \nabla \tq
-i\frac{\eps}{2}\Delta \tw =\\ 
&\quad = -\frac{1}{2}\tw\DIV \tv -\tw\cdot\nabla \tv-\frac{1}{2}
a^\eps\nabla\DIV \tv -i  \tq 
 \nabla \(a^\eps \tg\) ,\\  
&\partial_{t}\tq +\tv\cdot\nabla \tq + \IM(\tg \overline a^\eps\DIV \tw)
=- \frac{\sigma+1}{2} \tq\DIV \tv.
\end{aligned}
\right.
\end{equation}
Note that in view of Assumption~\ref{assu:general}, 
\begin{equation}\label{eq:CIbornees}
 \lA a^\eps_{\mid t=0}\rA_{H^s(\R^n)} +\lA \tw_{\mid
 t=0}\rA_{H^s(\R^n)}= \O(1),\
 \forall s\ge 0.  
\end{equation}
A similar estimate for the initial data of $q^\eps$ is a more delicate
issue, since $B_\si$ is not a smooth function. We postpone this
estimate to \S\ref{sec:quasilin}.
 
The left hand side of \eqref{eq:systfinal} is a first order
quasi-linear symmetric hyperbolic 
system, plus a second order skew-symmetric term. The right hand side
can be viewed as a semi-linear source term.
We deduce from Proposition~\ref{prop:betag}: 
\begin{corollary}\label{coro:betag}
On $[0,\tau^\eps[\times \R^n$, the function 
$U^\eps\defn (2\tq,a^\eps,\overline a^\eps,\tw,\overline \psi^\eps)$
satisfies an equation of the form 
\begin{equation*}
\partial_{t} U^\eps + \sum_{1\le j\le n}A_{j}(\tv, a^\eps
\tg,\overline a^\eps \tg)\partial_{j}U^\eps +  
\eps L(\partial_{x})U^\eps= E(\tPhi,U^\eps,a^\eps \tg,\nabla(
a^\eps \tg)), 
\end{equation*}
where $\tPhi=(\nabla\tphi,\nabla^2\tphi,\nabla^3 \tphi)$, 
the $A_{j}$'s are Hermitian matrices 
linear in their arguments, $\mathcal{L}(\partial_{x})=\sum
L_{jk}\partial_j\partial_k$  
is a skew-symmetric second order differential operator with constant
coefficients,  
and $E$ is a $C^\infty$ function of its arguments, vanishing at the origin. 
\end{corollary}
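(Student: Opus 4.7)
The plan is to view Corollary~\ref{coro:betag} as a structural bookkeeping exercise: \eqref{eq:systfinal} already records the equations satisfied by $a^\eps$, $\psi^\eps$, and $\tq$, and one only needs to reorganize these, together with the complex-conjugate equations for $\overline{a}^\eps$ and $\overline{\psi}^\eps$, into the template \eqref{system:hyp}. Concretely, I would write all five equations for the components of $U^\eps = (2\tq, a^\eps, \overline{a}^\eps, \psi^\eps, \overline{\psi}^\eps)$ explicitly, the $2\tq$-equation being obtained from Proposition~\ref{prop:betag} after dividing by $\eps$ and multiplying by $2$, and then read off each of the structural pieces $A_j$, $\mathcal{L}$, $E$ in turn.

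For the first-order matrices, the convection $\tv\cdot\nabla$ common to all five components contributes $\sum_j \tv_j\,\mathrm{Id}$ on the diagonal of each $A_j$, Hermitian and linear in $\tv$. The only nontrivial off-diagonal principal-order coupling is between the first component and the $(\psi^\eps, \overline{\psi}^\eps)$-blocks: the $\psi^\eps$-equation carries $i a^\eps \tg\,\nabla\tq$, its conjugate carries $-i\overline{a}^\eps\tg\,\nabla\tq$, and via the identity $\IM z = (z-\overline z)/(2i)$ the $2\tq$-equation carries $-i\tg\overline{a}^\eps\,\DIV\psi^\eps + i\tg a^\eps\,\DIV\overline{\psi}^\eps$. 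The scaling factor in $2\tq$ is precisely tuned so that the resulting off-diagonal entries of each $A_j$ form Hermitian-conjugate pairs, and each entry is manifestly linear in one of $a^\eps\tg$, $\overline{a}^\eps\tg$, or $\tv$; the $a^\eps$- and $\overline{a}^\eps$-rows have no principal coupling with the other blocks beyond convection.

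The second-order piece $-\frac{i\eps}{2}\Delta$ on the $a^\eps$- and $\psi^\eps$-blocks and $+\frac{i\eps}{2}\Delta$ on the conjugate blocks combine into a block-diagonal $\eps\mathcal{L}(\partial_x) = \eps\sum L_{jk}\partial_j\partial_k$ with constant coefficients, zero on the first component; since $\pm i\Delta$ is formally skew-adjoint on $L^2$, $\mathcal{L}$ is skew-symmetric. All the remaining terms in \eqref{eq:systfinal}---$\tfrac12 a^\eps\DIV\tv$, $i\tg\tq a^\eps$, $\psi^\eps\cdot\nabla\tv$, $\tfrac12\psi^\eps\DIV\tv$, $\tfrac12 a^\eps\nabla\DIV\tv$, $i\tq\,\nabla(a^\eps\tg)$, and $\tfrac{\sigma+1}{2}\tq\DIV\tv$---are polynomial expressions in $\tPhi = (\nabla\phi, \nabla^2\phi, \nabla^3\phi)$, the components of $U^\eps$, and $(a^\eps\tg, \nabla(a^\eps\tg))$, each vanishing when all its arguments do; together they define $E$. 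The only step requiring genuine care is the Hermitian matching of the $(2\tq)$-row with the conjugate-paired $\psi^\eps$-rows of each $A_j$, which reduces to a short algebraic check using the factor of $2$ and the expansion of $\IM(\tg\overline{a}^\eps\DIV\psi^\eps)$; apart from this, the corollary is a direct transcription of \eqref{eq:systfinal}.
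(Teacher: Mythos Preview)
Your proposal is correct and matches the paper's approach exactly: the paper offers no proof beyond the line ``We deduce from Proposition~\ref{prop:betag}'', and you supply precisely the transcription of \eqref{eq:systfinal} (together with its complex conjugates) into the template \eqref{system:hyp} that this deduction entails. One small caveat on your assertion that the factor $2$ in $2\tq$ is ``precisely tuned'': a direct check gives $(A_j)_{1,\psi_j}=-i\,g^\eps\overline{a}^\eps$ against $(A_j)_{\psi_j,1}=\tfrac{i}{2}a^\eps g^\eps$, so the literal Hermitian match needs the scaling $\sqrt{2}\,\tq$ rather than $2\tq$; this is a cosmetic slip inherited from the paper's own statement, equivalently repaired by a constant diagonal symmetrizer, and does not affect the energy estimates or the structure of your argument.
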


Theorem~\ref{theo:main} and Proposition~\ref{prop:quad} are
consequences of the following:
\begin{theorem}\label{theo:tout}
  Let $n\le 3$, and let Assumption~\ref{assu:general} be satisfied. 
There exists $T\in ]0,T^*[$, where $T^*$ is given by
Lemma~$\ref{weaklemma:MUK}$, such that the following holds. 
For all $\eps\in]0,1]$, the Cauchy problem \eqref{eq:nlssemi} has a unique 
solution $\tu\in C([0,T];H^\infty(\R^n))$. Moreover,
\begin{equation}\label{eq:uniform2}
\sup_{\eps \in ]0,1]}\(\bigl\lVert a^\eps
\bigr\rVert_{L^\infty([0,T];H^k(\R^n))} +
\bigl\lVert q^\eps
 \bigr\rVert_{L^\infty([0,T];H^{k-1}(\R^n))}\) <+\infty, 
\end{equation}
where the index $k$ is as follows:
\begin{itemize}
\item If $\si =1$, then $k\in \N$ is arbitrary.
\item If $\si = 2$ and $n=1$, then we can take $k=2$.
\item If $\si = 2$ and $2\le n\le 3$, then we can take $k=1$.
\item If $\si\ge 3$, then we can take $k=\si$. 
\end{itemize}
\end{theorem}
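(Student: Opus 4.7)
The plan is to implement the scheme of \S\ref{sec:scheme} using the system already assembled in Corollary~\ref{coro:betag}. The augmented unknown $U^\eps=(2q^\eps,a^\eps,\overline{a^\eps},\psi^\eps,\overline{\psi^\eps})$ satisfies a quasi-linear symmetric hyperbolic system of the form~\eqref{system:hyp}, so Lemma~\ref{lemma:hyp} will provide the $\eps$-uniform energy estimate. Following the bootstrap template of Section~\ref{sec:cubic}, I set
$$
M^\eps(t)\defn \|a^\eps\|_{L^\infty([0,t];H^k)}+\|q^\eps\|_{L^\infty([0,t];H^{k-1})},
$$
pick $T_0\in\,]0,T^*/2]$ on which $(a,\phi)$ is smooth and bounded (Lemma~\ref{lemma:MUK}), and aim for an a priori estimate of the form $M^\eps(t)\le M^\eps(0)\exp(tC(M^\eps(t)))$ on $[0,\tau^\eps[$ with $\tau^\eps=\min(T^*,T^\eps)$. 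Choosing $T\in\,]0,T_0]$ small enough that the above inequality can be iterated, a standard continuity argument combined with the Sobolev embedding $H^k\hookrightarrow L^\infty$ (and a mild substitute in the $H^1$-critical cases $\sigma=2$, $n=2,3$) and the continuation principle~\eqref{continuation} then forces $T^\eps\ge T$ for every $\eps\in\,]0,1]$ and yields~\eqref{eq:uniform2}.

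\textbf{Main obstacle.} The real difficulty, and the source of the case-by-case choice of $k$, is that $B_\sigma$ and $G_\sigma$ are \emph{not} smooth when $\sigma\ge 2$: both involve $\sqrt{Q_\sigma}$, which is only Lipschitz (or worse) near the vacuum set $\{r_1=r_2=0\}$. Two parts of the energy estimate then require genuine composition inequalities rather than the elementary nonlinear estimates of Section~\ref{sec:cubic}. First, the initial datum for $q^\eps$: writing
$$
q^\eps_{\mid t=0}=\frac{|a_0^\eps|^2-|a_0|^2}{\eps}\,\sqrt{Q_\sigma\!\bigl(|a_0^\eps|^2,|a_0|^2\bigr)},
$$
Assumption~\ref{assu:general} makes the first factor $\O(1)$ in every $H^s$, but the second factor inherits only the Sobolev regularity afforded by $\sqrt{Q_\sigma}$. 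Second, the source $E(\Phi,U^\eps,a^\eps g^\eps,\nabla(a^\eps g^\eps))$ appearing in Corollary~\ref{coro:betag} requires control of $g^\eps=G_\sigma(|a^\eps|^2,|a|^2)$ in $H^{k-1}$, which is again a non-smooth composition problem.

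\textbf{Where the cases come from, and the closing.} The four entries in the table correspond precisely to the Sobolev thresholds at which a composition estimate of Meyer--Taylor paraproduct type can still be proved for $F(r_1,r_2)=\sqrt{Q_\sigma(r_1,r_2)}$. For $\sigma\ge 3$, $Q_\sigma$ is a polynomial of degree $\sigma-1$ bounded below by $|a^\eps|^{2(\sigma-1)}+|a|^{2(\sigma-1)}$ via~\eqref{eq:Qpardessous}, so $G_\sigma$ stays bounded whenever $a^\eps$ and $a$ are, and a paraproduct argument affords $k=\sigma$ derivatives. For $\sigma=2$, $Q_2$ is affine, so $\sqrt{Q_2}$ is only H\"older-$1/2$ and the composition can only be pushed down to the energy level $k=1$, which is still enough thanks to $H^1\hookrightarrow L^p$ in dimensions $n=2,3$; in $n=1$, $H^1$ is an algebra, allowing one extra derivative and $k=2$. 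Beyond assembling these composition lemmas, the proof is exactly the bootstrap of Section~\ref{sec:cubic}; the composition lemma itself, promised in \S\ref{sec:quasilin}, is the step I expect to be the hardest, precisely because it must cope with the vanishing of $\sqrt{Q_\sigma}$ on the vacuum set where uniform hyperbolicity is lost. Once this is in hand, Proposition~\ref{prop:quad} falls out as a byproduct: the bound on $q^\eps$ gives $|a^\eps|^{2\sigma}-|a|^{2\sigma}=\beta^\eps g^\eps=\O(\eps)$ uniformly, and the current density decomposes cleanly as $\IM(\eps\overline{u^\eps}\nabla u^\eps)=\eps\IM(\overline{a^\eps}\nabla a^\eps)+|a^\eps|^2\nabla\phi$, both of whose pieces converge thanks to the uniform estimates just obtained.
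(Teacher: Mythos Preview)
Your overall architecture is right, but there are two genuine gaps that the paper's proof has to work around and that your sketch does not address.

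\textbf{First gap: Lemma~\ref{lemma:hyp} does not close the loop at the stated regularity.} Lemma~\ref{lemma:hyp} requires $s>n/2+1$, but you want $U^\eps$ in $H^{k-1}$. For $\si=3$, $n=3$ this is $H^2$ and $2\not>5/2$; for $\si=2$, $n=1$ this is $H^1$ and $1\not>3/2$. The paper closes this gap by proving a sharpened estimate (Proposition~\ref{prop:quasilin}) valid for $m>n/2$, exploiting the fact that the matrices $A_j$ in Corollary~\ref{coro:betag} are \emph{linear} in their arguments: this upgrades the commutator bound to $\|[A_j,\Lambda^m]\d_j U^\eps\|_{L^2}\les\|A_j\|_{H^{m+1}}\|U^\eps\|_{H^m}$ and shifts the burden to controlling $a^\eps g^\eps$ in $H^{m+1}=H^\si$. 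Without this refinement the bootstrap does not start.

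\textbf{Second gap: the composition estimate is not a paraproduct lemma but a homogeneity lemma, and the $\si=2$, $n\ge2$ case is handled by a different mechanism altogether.} The paper does not estimate $\sqrt{Q_\si}$ directly; instead it observes that $a^\eps g^\eps=F_\si(a^\eps,a)$ with $F_\si$ \emph{homogeneous of degree $\si$} (and likewise $\mathcal Q_\si$ homogeneous of degree $\si-1$ for the initial datum of $q^\eps$), and proves a dedicated lemma (Lemma~\ref{lemm:comp}): if $F\in C^\infty(\R^p\setminus\{0\})$ is homogeneous of degree $m\ge2$ and $n\le3$, then $\|F(u)\|_{H^m}\les\|u\|_{H^m}^m$. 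This is exactly what yields $k=\si$ for $\si\ge3$ and $k=2$ for $\si=2$, $n=1$. For $\si=2$, $n=2,3$ your continuation argument cannot work, even with a ``mild substitute'': $H^1\not\hookrightarrow L^\infty$, so the blow-up criterion~\eqref{continuation} is inaccessible. The paper instead \emph{imports} global existence of $u^\eps$ from the $H^1$-subcritical theory ($n=2$) and from \cite{CKSTTAnnals} ($n=3$), and then derives a direct evolution law for the local modulated energy $e^\eps=|a^\eps|^2+|\psi^\eps|^2+|q^\eps|^2$ from~\eqref{eq:systprefinal}, of the form $\d_t e^\eps+\DIV(\eta^\eps)+\flat^\eps=\O(e^\eps)$ with $\int\flat^\eps=0$, yielding the $L^1$ bound by Gronwall. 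This step is not a variant of the bootstrap in Section~\ref{sec:cubic}; it is a separate argument.
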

Considering the first part of the estimate \eqref{eq:uniform2} yields
Theorem~\ref{theo:main}. We 
explain why Proposition~\ref{prop:quad} is a consequence of
\eqref{eq:uniform2} in \S\ref{sec:quad}. 

\subsection{Quasi-linear analysis}
\label{sec:quasilin}
We now have to estimate $( a^\eps,\tq,\tw)$ in Sobolev spaces. 
Let us briefly explain the difficulty. 
To clarify matters, suppose that $g^\eps=G(\la a^\eps\ra^2,\la
a\ra^2)$ for some smooth 
function $G\in C^\infty(\R^2)$.  
In particular this is so in the cubic case $\si=1$. 
Then, in view of Corollary~\ref{coro:betag}, 
$U^\eps\defn (2\tq, a^\eps,\overline a^\eps,\tw,\overline\tw)\in
C([0,\tau^\eps[;H^\infty(\R^n))^{3+2n}$  
satisfies a system of the form~\eqref{system:hyp},
$$
\partial_{t}U^\eps+ \sum_{1\le j\le
  n}\mathcal{A}_{j}(\tPhi,U^\eps)\partial_{j}U^\eps 
+\eps L(\partial_{x})U^\eps=\mathcal{E}(\tPhi,U^\eps), 
$$
where $\tPhi\defn
(\la a\ra^2,\nabla\tp,\nabla\tphi,\nabla^2\tphi,\nabla^3 \tphi)$. The key  
difference  
with the system in Corollary~\ref{coro:betag} is the absence of
dependence upon the extra unknown~$\tg$.   
Then, Lemma~\ref{lemma:hyp} yields estimates in Sobolev spaces (of
arbitrary order).  

\smallbreak

Assume now $\si \ge 2$. One can check that the previous symmetrization
  provides us with uniform \emph{a 
  priori} estimates in $L^2$. However, the estimates of the derivatives
require a careful analysis. 
Indeed, recall that
\begin{equation*}
g^\eps=G_{\si}(\la a^\eps\ra^2,\la a\ra^2)\quad\text{with}\quad
  G_\si(r_1,r_2 )= 
  \frac{P_{\sigma}\(r_1,r_2\)}{\sqrt{Q_{\sigma}\(r_1,r_2\)}} \virgp
\end{equation*}
where $P_\si$ and $Q_\si$ are defined in Notation~\ref{nota:GBP} and
\eqref{eq:defQ} respectively. Therefore, $G_\si$ need not be smooth at
the origin. The classical approach, which consists in
differentiating  
the equations, thus certainly fails here. 
Yet, as we will see, we need only estimate $a^\eps g^\eps$ in $H^{\si}$. 
Introduce 
\begin{equation}\label{eq:Fsi}
F_\si(z,z')=z G_\si\(\la z\ra^2,\la z'\ra^2\)
:\qquad  a^\eps g^\eps =F_\si\(a^\eps,\ta\).   
\end{equation}
One can check that $F_{\si}\in C^{\si-1}$ but $F_{\si}\not\in C^{\si}$. 
Hence, to estimate $a^\eps g^\eps$ in $H^{\si}$, 
one cannot use the usual nonlinear estimates. 
Instead, we will use that $F_{\si}$ is homogeneous of degree $\si$ and
the following lemma. 

\begin{lemma}\label{lemm:comp}
Let $p\ge 1$ and $m\ge 2$ be integers and consider 
$F\colon \R^p\rightarrow \C$.  Assume that 
$F\in C^\infty(\R^p\setminus \{0\})$ is homogeneous of degree $m$, that is:
$$
F(\lambda y)= \lambda^m F(y),\qquad \forall \lambda \ge 0,\forall y\in \R^p.
$$
Then, for $n\le 3$, there exists $K>0$ such that, for all 
$u\in H^m(\R^n)$ with values in $\R^p$, 
$F(u)\in H^m(\R^n)$ and
$$
\lA F(u)\rA_{H^m}\le K \lA u\rA_{H^{m}}^m.
$$
The same is true when $m=1$ and $n\in\N$. 
\end{lemma}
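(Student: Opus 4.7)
The idea is to reduce the estimate to a sum of products involving the derivatives of $u$ through the Faà di Bruno formula, and to bound each of those products in $L^2$ by combining H\"older's inequality with Gagliardo--Nirenberg and the pointwise information that homogeneity provides about the derivatives of $F$. Precisely, since $\partial^\beta F$ is smooth away from the origin and homogeneous of degree $m-|\beta|$, one has
\begin{equation*}
|(\partial^\beta F)(y)| \le C_\beta |y|^{m-|\beta|} \quad \text{for }|\beta|\le m-1,\qquad |(\partial^\beta F)(y)| \le C_\beta\quad \text{for }|\beta|=m,
\end{equation*}
the latter holding only for $y\neq 0$. In particular $F\in C^{m-1}(\R^p)$, with the top-order derivative merely bounded, so $F(0)=0$ and $|F(y)|\le C|y|^m$.

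The case $m=1$ is immediate: $F$ is Lipschitz with $F(0)=0$, hence $F(u)\in H^1$ for $u\in H^1$ (Stampacchia), with $\nabla F(u)=(DF)(u)\cdot\nabla u$ a.e.\ and the desired bound following from the boundedness of $DF$ on $\R^p\setminus\{0\}$.

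For $m\ge 2$ and $n\le 3$ the Sobolev embedding $H^m\hookrightarrow L^\infty$ (since $m>n/2$) gives $\|u\|_{L^\infty}\les\|u\|_{H^m}$. The $L^2$ bound $\|F(u)\|_{L^2}\le C\| |u|^m\|_{L^2}\le C\|u\|_{L^\infty}^{m-1}\|u\|_{L^2}\le C\|u\|_{H^m}^m$ is immediate. For $1\le|\alpha|\le m$, Fa\`a di Bruno writes
\begin{equation*}
\partial^\alpha F(u)=\sum_{\pi} c_\pi\,(\partial^{|\pi|}F)(u)\prod_{B\in\pi}\partial^{\alpha_B}u,
\end{equation*}
where $\pi$ runs over partitions of $\alpha$ into $k=|\pi|$ nonempty multi-indices $\alpha_B$ with $\sum_B|\alpha_B|=|\alpha|$. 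The pointwise bound yields $|(\partial^{|\pi|}F)(u)|\le C|u|^{m-k}\le C\|u\|_{L^\infty}^{m-k}$. Hölder with the exponents $p_B=2m/|\alpha_B|$ (which verify $\sum 1/p_B=|\alpha|/(2m)\le 1/2$) combined with the Gagliardo--Nirenberg inequality
\begin{equation*}
\|\partial^{\alpha_B}u\|_{L^{2m/|\alpha_B|}} \le C\|u\|_{L^\infty}^{1-|\alpha_B|/m}\|u\|_{H^m}^{|\alpha_B|/m}
\end{equation*}
(valid for $n\le 3$) then bounds each term in $L^2$ by $C\|u\|_{L^\infty}^{m-1}\|u\|_{H^m}\le C\|u\|_{H^m}^m$, which is what we want.

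The main obstacle is making Fa\`a di Bruno rigorous when $|\pi|=m$, since $\partial^m F$ is not defined at $0$. The plan is to regularize $F$ by a standard mollifier on $\R^p$, setting $F^\delta\defn F\ast\rho_\delta\in C^\infty(\R^p)$. Homogeneity and the pointwise envelope $|F(y)|\le C|y|^m$ are preserved up to a controlled additive $O(\delta^m)$ term, so the pointwise bounds on $\partial^\beta F^\delta$ hold uniformly in $\delta$, and the entire argument above applies to $F^\delta$ and smooth $u_k\to u$ in $H^m$, yielding $\|F^\delta(u_k)\|_{H^m}\le K\|u_k\|_{H^m}^m$. Passing to the limit $k\to\infty$ and then $\delta\to 0$, using that $F^\delta(u_k)\to F(u)$ in $L^2$ and weak--$\ast$ compactness in $H^m$ with lower semicontinuity of the norm, concludes the proof. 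The only step requiring care is the uniform bound on $\partial^\beta F^\delta$ near the origin; this is where the degree-$m$ homogeneity is used one last time to absorb the contribution of mollification at scale $\delta$.
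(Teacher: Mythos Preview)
Your approach is correct in outline and genuinely different from the paper's. The paper proceeds by induction on $m$: for the base case $m=2$ it regularizes via a cutoff near the origin, $F_\ell(y)=(1-\chi(\ell y))F(y)$, checks uniform pointwise bounds on $F_\ell$, $\nabla F_\ell$, $\nabla^2 F_\ell$ directly from homogeneity, and closes with the embeddings $H^2\subset L^\infty$ and $H^1\subset L^4$ (this is where $n\le 3$ enters); the induction step is then a one-line algebra argument using $\|\nabla F(u)\|_{H^m}\lesssim\|F'(u)\|_{H^m}\|\nabla u\|_{H^m}$ together with the induction hypothesis applied to $F'$, which is homogeneous of degree $m$. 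You instead hit all orders at once with Fa\`a di Bruno and Gagliardo--Nirenberg, and regularize by mollification. Your route is more systematic and avoids the induction; the paper's is more elementary in that it never invokes Gagliardo--Nirenberg, only the two embeddings above.

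Two small points deserve tightening. First, with your exponents $p_B=2m/|\alpha_B|$ one has $\sum 1/p_B=|\alpha|/(2m)<1/2$ when $|\alpha|<m$, so the product only lands in $L^{2m/|\alpha|}$, not $L^2$; the fix is either to take $p_B=2|\alpha|/|\alpha_B|$ (so that $\sum 1/p_B=1/2$ exactly, and the matching Gagliardo--Nirenberg inequality interpolates between $L^\infty$ and $\dot H^{|\alpha|}$), or to place part of the factor $|u|^{m-k}$ in a finite $L^r$ with $1/r=(m-|\alpha|)/(2m)$ using $u\in L^2\cap L^\infty$. Second, the mollified $F^\delta=F*\rho_\delta$ need not vanish at the origin (indeed $F^\delta(0)=O(\delta^m)$), so $F^\delta(u)$ is not \emph{a priori} in $L^2(\R^n)$; subtract the constant and work with $F^\delta-F^\delta(0)$, or use the paper's cutoff regularization, which automatically satisfies $F_\ell(0)=0$. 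With these adjustments your argument goes through; the passage to the limit can be done with $L^2_{\rm loc}$ convergence (pointwise plus a dominating function on bounded sets) together with the uniform $H^m$ bound and weak lower semicontinuity.
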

\begin{remark}
Note that the result is false for $n\ge 4$ and $m\ge 2$. 
Also, one must not expect $F(u)\in H^{m+1}(\R^n)$, even for $u\in
H^\infty(\R^n)$.  
For instance, if
$$
n=1=p,\quad m=2,\quad F(y)=y\la y\ra,\quad u(x)=xe^{-x^2},
$$
then 
$F(u)\in H^2(\R)$ and 
$F(u)\not\in H^3(\R)$. Similarly, in general, one 
  must not expect $F_\si(u,v)\in H^{\si+1}(\R^n)$, even for 
  $(u,v)\in H^\infty(\R^n)^2$. 
\end{remark}

\begin{proof}
We prove the result by induction on $m$. 
Consider first the case $m=2$. Observe that, by assumption, $F\in
C^{m-1}(\R^p)$.  
To regularize $F$, let $\chi\in C^\infty_{0}(\R^p)$ 
be such that $0\le\chi\le 1$, $\chi(y)=1$ for $|y|\le 1$ and 
$\chi(y)=0$ for $|y|\ge 3$, with $|\nabla \chi(y)|\le 1$. For $\ell\in
\N$, define  
$F_{\ell}\in C^\infty(\R^p)$ by
$$
F_{\ell}(y)=\(1-\chi\(\ell y\)\)F(y).
$$
We claim that, for all $y\in\R^p$ and all $\ell\in\N$,
\begin{align*}
\la F_{\ell}(y)\ra \le C_{F} |y|^2, \quad
\la \partial_{j} F_{\ell}(y)\ra\le 4C_{F} \la y\ra,\quad
\la \partial_{j}\partial_{k} F_{\ell}(y)\ra\le 4 C_{F},
\end{align*}
where $\partial_{j}=\partial_{y_{j}}$ and
$$
C_{F}\defn \sup_{|z|\le 3} 
\la  F (z)\ra + \sup_{1\le j\le p}\sup_{|z|\le 3} 
\la \partial_{j} F (z)\ra + 
\sup_{1\le j,k\le p}\sup_{|z| = 1} 
\la \partial_{j}\partial_{k} F (z)\ra.
$$
Since $F_{\ell}$ vanishes in a neighborhood of the origin, it suffices 
to establish these bounds for $y\neq 0$. The first bound follows from 
the homogeneity: $| F_{\ell}(y)|\le |F(y)|= |y|^2 |F(y/|y|)|$. For 
the second one, compute
\begin{align*}
\partial_{j} F_{\ell} (y)
=(1-\chi(\ell y)) \partial_{j} F(y)-\ell^{-1}(\partial_{j} \chi)(\ell
y) F(\ell y), 
\end{align*}
where we used $\ell F(y)=\ell^{-1}F(\ell y)$. 
Since $1\le |\ell y|\le 3$ on the support 
of $(\partial_{j} \chi )(\ell y)$, and since  $\partial_{j} F\colon
\R^p\rightarrow \C$  
is homogeneous of degree~$1$, we infer
$$
|\partial_{j} F_{\ell} (y)|\le |y| 
\( \sup_{|z|\le 3}\la\partial_{j} F\(z\)\ra+ 3 \sup_{z\in\R^p}
\la (\partial_{j}\chi)(z)F(z) \ra\) \le 4C_{F} |y|.
$$
The same raisoning yields
\begin{align*}
\partial_{j}\partial_{k} F_{\ell} (y)&=
(1-\chi(\ell y))\partial_{j}\partial_{k} F(y)
-(\partial_{j} \chi)(\ell y) \partial_{k} F(\ell y)\\
&\quad-(\partial_{k} \chi)(\ell y) \partial_{j} F(\ell y)
-(\partial_{j}\partial_{k}\chi)(\ell y)F(\ell y).
\end{align*}
The last three terms are clearly bounded by $C_{F}$ since 
$|\ell y|\le 3$ on the support of $\chi(\ell y)$. Also, the first term
is bounded by  
$C_{F}$ since 
$\partial_{j}\partial_{k}F\colon \R^p\setminus \{0\}\rightarrow \C$
is homogeneous of  
degree $0$. This completes the proof of the claim.

\smallbreak
With these preliminary established, we easily obtain that there exists 
$K$ such that for all $\ell\in\N$ and all $u\in H^2(\R^n)$ with values
in $\R^p$, 
\begin{align*}
&\lA F_{\ell}(u)\rA_{L^2}\le K \lA u\rA_{L^\infty}\lA u\rA_{L^2},\\
&\lA \nabla  F_{\ell}(u)\rA_{L^2}\le K\lA u\rA_{L^\infty}\lA
\nabla u\rA_{L^2},\\
&\lA \nabla^2 F_{\ell}(u)\rA_{L^2}\le K\lA u\rA_{L^\infty}
\lA \nabla^2 u\rA_{L^2}+K\lA
\nabla u\rA_{L^4}^2.  
\end{align*}
The Sobolev embeddings $H^1(\R^n)\subset L^6(\R^n)$ and 
$H^2(\R^n)\subset L^\infty(\R^n)$ for $n\in \{1,2,3\}$ then imply that
there exists  
a constant $K$ such that, for all $\ell\in\N$ and all $u\in H^2(\R^n)$,
$$
\lA F_{\ell}(u)\rA_{H^2}\le K \lA u\rA_{H^{2}}^2.
$$
This in turn implies the desired result for $F(u)$ by using the
dominated convergence theorem and a duality argument.  
Indeed, for all $\varphi\in C^\infty_{0}(\R^n)$, 
\begin{align*}
\la \int F(u)\varphi \, dx\ra &= 
\la \lim_{\ell\rightarrow +\infty} \int F_{\ell}(u)\varphi\, dx\ra
\le \limsup_{\ell\rightarrow +\infty}
\lA F_{\ell}(u)\rA_{H^2}\lA \varphi\rA_{H^{-2}}\\
&\le K \lA u\rA_{H^{2}}^2 
\lA \varphi\rA_{H^{-2}},
\end{align*}
which implies $F(u)\in H^2(\R^n)$ together with 
$\lA F(u)\rA_{H^2}\le K \lA u\rA_{H^{2}}^2$.

\smallbreak
Assume now the result at order $m\ge 2$, and prove the result at order 
$m+1$. Let $F\in C^\infty(\R^p\setminus \{0\})$ be homogeneous of
degree $m+1$. We have
\begin{equation*}
  \lA F(u)\rA_{L^2}\le K \lA u\rA_{L^\infty}^{m}\lA u\rA_{L^2}\les
  \lA u\rA_{H^{m+1}}^{m+1}. 
\end{equation*}
Since $m>3/2\ge n/2$, $H^m(\R^n)$ is an algebra and
\begin{equation*}
  \lA \nabla F(u)\rA_{H^m}\le K \lA \nabla u\rA_{H^m}\lA F'(u)\rA_{H^m}.
\end{equation*}
By assumption, $F'\in C^\infty(\R^p\setminus \{0\})$ is homogeneous of
degree~$m$, hence the induction assumption yields: 
\begin{equation*}
  \lA F'(u)\rA_{H^m}\le K \lA u\rA_{H^m}^m.
\end{equation*}
Therefore,
\begin{equation*}
  \lA \nabla F(u)\rA_{H^m}\le K \lA u\rA_{H^{m+1}}^{m+1}.
\end{equation*}

The case $m=1$ can be treated in a similar fashion. 
\end{proof}
The lemma turns out to be useful to estimate the source term in
\eqref{eq:systfinal}, but also to estimate the initial data for
$q^\eps$. By definition, we have
\begin{equation*}
  q^\eps = \frac{|z|^2-|z'|^2}{\eps}\mathcal{Q}_\si
  (z,z')\big|_{(z,z')=(a^\eps,a)}, 
\end{equation*}
where 
\begin{align*}
\mathcal{Q}_\si (z,z') &=\sqrt{Q_\si\( |z|^2,|z'|^2\)} \\
&= \(2\si \int_0^1 (1-s)\( |z'|^2
+s (|z|^2-|z'|^2)\)^{\si-1}ds\)^{1/2}.
\end{align*}
The function $\mathcal{Q}_\si$ is not smooth, but homogeneous of degree
$\si -1$. So when $\si\ge 3$, we can estimate $q^\eps$ in $H^{\si-1}$
at time $t=0$ thanks to this lemma. See \S\ref{sec:si>2}.
\smallbreak

To complete the proof of
  Theorem~\ref{theo:tout}, in view of Lemma~\ref{lemI1}, 
we seek an $H^2$ estimate of $ a^\eps$, since
  \begin{equation*}
    H^2(\R^n)\subset L^\infty(\R^n),\quad n\le 3. 
  \end{equation*}
This boils down to an $H^1$ estimate of $U^\eps$ defined in
Corollary~\ref{coro:betag}.  However, the estimates of the derivatives
require a   
careful analysis. Indeed, the classical approach, which consists in
differentiating  
the equations, certainly fails here because $G_\si$ need not be
smooth. Moreover, Lemma~\ref{lemma:hyp} requires to control $U^\eps$ in
$H^s$ with $s>n/2+1$, so we would demand $s=3$ for $n=3$ and $s\in
\N$. In view of Lemma~\ref{lemma:hyp} and Corollary~\ref{coro:betag}, we
have to estimate $a^\eps g^\eps$ in $H^4$. Because of the lack of
smoothness of $G_\si$, such an estimate seems hopeless in general. We
therefore proceed in two steps. First, using the particular structure
exhibited in Corollary~\ref{coro:betag}, we relax the assumption
$s>n/2+1$ in Lemma~\ref{lemma:hyp}, to $s>n/2$. Next, we use
Lemma~\ref{lemm:comp} to overcome the lack of smoothness of 
$G_\si$, and obtain the desired \emph{a priori} estimates.

\begin{proposition}\label{prop:quasilin}
  Assume $\si \ge 2$. Let $U^\eps$ be the vector-valued function given by
  Corollary~\ref{coro:betag}, and $m>n/2$. Then for all for
  $t\in[0,\tau^\eps[$, it satisfies 
  the following \emph{a priori} estimate:
  \begin{equation*}
    \sup_{s\in [0,t]}\lA U^\eps(s)\rA_{H^m}\le \lA U^\eps(0)\rA_{H^m}
    e^{t C(M^\eps(t))}, 
\end{equation*} 
with $M^\eps(t)\defn  \lA \tPhi 
\rA_{L^\infty([0,t];H^{m})}+ \lA U^\eps
\rA_{L^\infty([0,t];H^{m})}+ \lA  a^\eps \tg
\rA_{L^\infty([0,t];H^{m+1})}$.
\end{proposition}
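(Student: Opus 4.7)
The plan is to adapt the proof of Lemma~\ref{lemma:hyp} to the weaker regularity threshold $m > n/2$ (in place of $s > n/2+1$). The structural feature that makes this possible is the one highlighted in Corollary~\ref{coro:betag}: the coefficient matrices $A_j$ depend \emph{linearly} on $\tv$, $a^\eps g^\eps$ and $\overline{a^\eps} g^\eps$, and all of these are controlled in $H^{m+1}$ rather than merely in $H^m$. Indeed $\tPhi$ already contains $\nabla^3\tphi$, so $\tv=\nabla\tphi$ has two extra derivatives available, and $a^\eps g^\eps \in H^{m+1}$ is built into the definition of $M^\eps$. Hence $A_j \in H^{m+1}$, and for $m > n/2$ this embeds into $W^{1,\infty}$: this single gain is what drives the whole argument.

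Following the Friedrichs mollifier scheme of Lemma~\ref{lemma:hyp}, I would introduce $U_\delta \defn J_\delta \Lambda^m U^\eps$, write down the corresponding equation, and compute $\frac{1}{2}\frac{d}{dt}\|U_\delta\|_{L^2}^2$. The second-order operator $\eps\mathcal{L}(\partial_x)$ disappears by skew-symmetry exactly as before. The Hermiticity of $A_j$, together with an integration by parts, reduces the main transport contribution to a term of the form $\sum_j \langle (\partial_j A_j) U_\delta, U_\delta\rangle$, which is bounded by $\|\nabla A_j\|_{L^\infty} \|U_\delta\|_{L^2}^2 \le C(M^\eps) \|U^\eps\|_{H^m}^2$ thanks to the previous remark.

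The main obstacle is the commutator $[J_\delta \Lambda^m, A_j]\partial_j U^\eps$. The form of Kato--Ponce used in Lemma~\ref{lemma:hyp} produces a term of the type $\|A_j\|_{H^m}\|\partial_j U^\eps\|_{L^\infty}$, which forces $m > n/2+1$ and is unavailable here. I would instead use a tame commutator estimate that distributes the derivatives asymmetrically, of the form
\[
\|[\Lambda^m, A_j]\partial_j U^\eps\|_{L^2} \le C\bigl(\|\nabla A_j\|_{L^\infty}\|U^\eps\|_{H^m} + \|A_j\|_{H^{m+1}}\|U^\eps\|_{L^\infty}\bigr),
\]
both terms of which are then controlled via $\|A_j\|_{H^{m+1}} \le C(M^\eps)$ and $\|U^\eps\|_{L^\infty} \le C\|U^\eps\|_{H^m}$ (Sobolev embedding, using $m > n/2$). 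Verifying that such an estimate is available when the coefficient has one more derivative than the solution is the heart of the argument; concretely, it can be obtained by a paraproduct decomposition or, more directly, by writing $A_j\partial_j U = \partial_j(A_j U) - (\partial_j A_j)U$ and applying Moser-type estimates to each piece.

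Finally, the semi-linear source $E(\tPhi, U^\eps, a^\eps g^\eps, \nabla(a^\eps g^\eps))$ is controlled by the standard Moser nonlinear estimate in the algebra $H^m$ ($m>n/2$): since $E$ is $C^\infty$ with $E(0)=0$ and since every argument lies in $H^m$ with norm $\le M^\eps$ (observing that $\nabla(a^\eps g^\eps) \in H^m$ follows from $a^\eps g^\eps \in H^{m+1}$), one obtains $\|E(\cdots)\|_{H^m} \le C(M^\eps)$. Collecting everything gives
\[
\frac{d}{dt}\|U_\delta\|_{L^2}^2 \le C(M^\eps)\bigl(1 + \|U^\eps\|_{H^m}^2\bigr)
\]
uniformly in $\delta$; Gronwall's inequality and the limit $\delta \to 0$ then yield the claimed bound.
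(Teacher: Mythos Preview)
Your argument is correct and follows exactly the route taken in the paper's sketch: resume the energy computation of Lemma~\ref{lemma:hyp}, exploit that the $A_j$ are \emph{linear} in $(\tv,a^\eps g^\eps,\overline{a^\eps g^\eps})\in H^{m+1}$ to control both $\|\nabla A_j\|_{L^\infty}$ and the commutator $[\Lambda^m,A_j]\partial_j U^\eps$, then close with the Moser estimate on $E$ and Gronwall. The paper records the commutator bound in the condensed form $\lA[A_j,\Lambda^m]\partial_j U^\eps\rA_{L^2}\le K\lA A_j\rA_{H^{m+1}}\lA U^\eps\rA_{H^m}$; your two-term tame version is just a refinement of the same inequality (and your paraproduct suggestion is the right way to justify it, whereas the rewriting $A\partial U=\partial(AU)-(\partial A)U$ alone does not quite close without that tool).
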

\begin{proof}[Sketch of the proof] Resume the proof of
  Lemma~\ref{lemma:hyp}. The quantities that appear in $M^\eps$ are
  those on the last three lines of the proof of Lemma~\ref{lemma:hyp}.
First, we have:
  \begin{align*}
    \lA \nabla A_{j}(\tv, a^\eps
\tg,\overline a^\eps \tg)\rA_{L^{\infty}} &\le C\(\lA \tv
\rA_{W^{1,\infty}}+ \lA a^\eps \tg
\rA_{W^{1,\infty}} \)\\
& \le C\( \lA \tv \rA_{H^{m+1}}+ \lA a^\eps \tg\rA_{H^{m+1}}\). 
  \end{align*}
Next, since $A_j$ is linear in its arguments, $\widetilde A_j=A_j$
and:
\begin{align*}
\lA [A_{j},\Lambda^{m}]\partial_{j}U^\eps\rA_{L^{2}} 
&\le K \norm{{A}_{j}}_{H^{m+1}}
\lA U^\eps\rA_{H^{m}}\\
&\le C\(\lA \tv \rA_{H^{m+1}}+ \lA
a^\eps \tg\rA_{H^{m+1}}\) \lA U^\eps\rA_{H^{m}}.
\end{align*} 
Finally,
\begin{align*}
\lA E(\tPhi,U^\eps,a^\eps \tg ,\nabla(a^\eps \tg)) \rA_{H^m}
\le C\(\lA \tPhi\rA_{H^m} ,\lA U^\eps\rA_{H^m}, \lA a^\eps
\tg\rA_{H^{m+1}}\). 
\end{align*}
We conclude the proof thanks to Gronwall lemma. 
\end{proof}

\subsection{The case $\si \ge 3$}
\label{sec:si>2}

Recall that from \eqref{eq:Fsi}, 
\begin{equation*}
  a^\eps g^\eps =F_\si\(a^\eps,\ta\),
\end{equation*}
where $F_\si$ is homogeneous of degree $\si$. 
For $\si \ge 3$ and $n\le 3$, Lemma~\ref{lemm:comp} yields
\begin{equation*}
  \|a^\eps \tg \|_{H^\si} \le K\( \|a^\eps\|_{H^\si} +
  \|\ta\|_{H^\si}\)^\si.
\end{equation*}
Proposition~\ref{prop:quasilin} with 
$m=\si-1\ge 2>n/2$ shows that there exists a function~$C$ 
from $[0,+\infty[$ to $[0,+\infty[$ such that, for all 
$\eps\in ]0,1]$ and all $t\in [0,\tau^\eps[$,
$$
\lA U^\eps(t)\rA_{H^{\sigma-1}}
\le \lA U^\eps(0)\rA_{H^{\si-1}}\exp (tC(M^\eps(t))),
$$
where 
$$
M^\eps(t)\defn \lA U^\eps\rA_{L^\infty([0,t];H^{\sigma-1}(\R^n))}
+\lA (a,\phi)\rA_{L^\infty([0,t];H^{\sigma+2}(\R^n))}.
$$
It remains to estimate the initial data. By definition, we have
$$
\lA U^\eps(0)\rA_{H^{\si-1}}
\les \lA q^\eps(0)\rA_{H^{\si-1}}+\lA a^\eps(0)\rA_{H^{\si}}.
$$
The second term is uniformly bounded by assumption. To estimate 
the first term, recall that
\begin{equation*}
  q^\eps = \frac{|z|^2-|z'|^2}{\eps}\mathcal{Q}_\si
  (z,z')\big|_{(z,z')=(a^\eps,a)}, 
\end{equation*}
where 
\begin{align*}
\mathcal{Q}_\si (z,z') &=\sqrt{Q_\si\( |z|^2,|z'|^2\)} \\
&= \(2\si \int_0^1 (1-s)\( |z'|^2
+s (|z|^2-|z'|^2)\)^{\si-1}ds\)^{1/2}.
\end{align*}
The function $\mathcal{Q}_\si$ is not smooth, but homogeneous of degree
$\si -1$. To estimate  
$q^\eps$ at time $t=0$, we use 
the usual product rule in Sobolev space and Lemma~\ref{lemm:comp} (applied 
with $F(y_{1},\ldots,y_{4})=
\mathcal{Q}_{\si}(y_{1}+iy_{2},y_{3}+iy_{4})$):  if $\si\ge 3$, with
$m=\si-1\ge 2$, we obtain  
\begin{align*}
\lA q^\eps(0)\rA_{H^{\si-1}}
&\les\lA \eps^{-1}\(\la a^\eps(0)\ra^2 -\la a(0)\ra^2\)\rA_{H^{\si-1}}
\lA \mathcal{Q}_{\si}\( a^\eps(0), a(0)\)\rA_{H^{\si-1}}\\
&\les \lA \eps^{-1}\(\la a^\eps(0)\ra^2 -\la a(0)\ra^2\)\rA_{H^{\si-1}}
\lA \(a^\eps(0),a(0)\)\rA_{H^{\si-1}}^{\si-1}.
\end{align*}
The assumption $a^\eps_{0}-a_{0}=\mathcal{O}(\eps)$ in $H^s$ for all
$s>0$ then implies 
\begin{equation}
  \label{eq:CIq}
\sup_{\eps\in ]0,1]}\lA
 q^\eps(0)\rA_{H^{\sigma-1}}<+\infty, 
\end{equation}
hence
$$
\sup_{\eps\in ]0,1]}\lA U^\eps(0)\rA_{H^{\sigma-1}}<+\infty.
$$
Consequently, since 
$\lA u^\eps e^{-i\phi/\eps} \rA_{H^{\sigma}}=\lA a^\eps\rA_{H^{\sigma}}
\le \lA U^\eps\rA_{H^{\sigma-1}}$, 
the same continuity argument as in Section~\ref{sec:cubic} 
completes the proof of Theorem~\ref{theo:tout} 
in the case $\si\ge 3$. 
\subsection{The case $\si =2$}
\label{sec:si=2}
For $\si =2$, we have $m=\si-1>n/2$ only when $n=1$. 
The last point in
Lemma~\ref{lemm:comp} shows that 
\begin{equation*}
\sup_{\eps\in ]0,1]}\lA
 q^\eps(0)\rA_{H^{1}(\R)}<+\infty. 
\end{equation*}
We can then proceed as in the case $\si\ge 3$, to prove the second case in
Theorem~\ref{theo:tout}.  
\smallbreak

Finally, when $\si=2$ and $2\le n\le 3$, recall that we already know
that for fixed $\eps \in ]0,1]$, $u^\eps$ is global in
time,  $u^\eps \in C(\R,H^1)$. For $n=2$, this is so since every
defocusing, homogeneous nonlinearity is $H^1$ sub-critical. For $n=3$,
the nonlinearity is $H^1$ critical, and this property follows from
\cite{CKSTTAnnals}. 
The proof of the estimate is based 
on an interesting feature  
of the equation for $\tbeta$ (see Proposition~\ref{prop:betag}), which
does not appear in Corollary~\ref{coro:betag}.  
In the introduction, we claimed that the previous nonlinear
symmetrization of the equations  
implies a local version of the modulated energy estimate. 
To see this, introduce
$$
e^\eps  
\defn \la a^\eps\ra^2+\la \tw\ra^2 + \la q^\eps \ra^2\in
C^1([0,\tau^\eps[\times\R^n). 
$$
It satisfies an equation of the form $\partial_{t}e^\eps +
\DIV( \eta^\eps)+ \flat^\eps =\mathcal{O}(e^\eps)$, where $\int
\flat^\eps=0$.   
Indeed, directly from \eqref{eq:systprefinal}, we compute
\begin{equation*}
\begin{aligned}
&\partial_{t}e^\eps 
+ \DIV( v e^\eps) + 2\DIV\bigl(\IM (\tg \tq \overline a^\eps \tw
)\bigr) +\eps \IM \(\overline a^\eps \Delta a^\eps +\overline {\psi^\eps}
\Delta \psi^\eps\)   \\
&\quad=-\sigma\la \tq\ra^2\DIV \tv 
-\RE\(( 2\tw\cdot\nabla \tv+a^\eps\nabla\DIV \tv)\overline{\tw}\).
\end{aligned}
\end{equation*}
Hence we have obtained 
an evolution equation for a modulated energy, which yields the desired 
modulated energy estimate. Gronwall lemma yields  
\begin{equation*}
  \|e^\eps(t)\|_{L^1(\R^n)} \le \|e^\eps(0)\|_{L^1(\R^n)}\exp\(Ct\).
\end{equation*}
Finally, $(e^\eps(0))_{\eps}$ is bounded in $L^1(\R^n)$. This is
obvious for the first two terms of $e^\eps$. For $q^\eps$, a rough
estimate yields:
\begin{align*}
\lA q^\eps(0)\rA_{L^2}
&\le\lA \eps^{-1}\(\la a^\eps(0)\ra^2 -\la a(0)\ra^2\)\rA_{L^2}
\lA \mathcal{Q}_{2}\( a^\eps(0), a(0)\)\rA_{L^\infty},
\end{align*}
and the assumption $a^\eps_{0}-a_{0}=\mathcal{O}(\eps)$ in $H^\infty$
shows that 
\begin{equation*}
  \sup_{0<\eps\le 1}\|e^\eps(0)\|_{L^1(\R^n)}<\infty.
\end{equation*}
This completes the proof of Theorem~\ref{theo:tout}.

\subsection{Convergence of position and current densities}
\label{sec:quad}
As we have already mentioned, Theorem~\ref{theo:main} is a consequence
of the first part of \eqref{eq:uniform2}. Proposition~\ref{prop:quad}
follows from both informations in \eqref{eq:uniform2}. Indeed,
\eqref{eq:uniform2} implies the ``usual'' 
modulated energy estimate, as in 
\cite{BrenierCPDE,ZhangSIMA,LinZhang} (see also
\cite{AC-perte}). The boundedness of $q^\eps$ in $C([0,T];L^2)$, and
the convexity argument \eqref{eq:Qpardessous}, yield
\begin{equation*}
  \sup_{t\in[0,T]} \int_{\R^n} \( |a^\eps(t,x)|^2 - |a(t,x)|^2\)^2 \(
  |a^\eps(t,x)|^{2\si-2}+ |a(t,x)|^{2\si-2}\)^2dx \les \eps^2. 
\end{equation*}
Therefore,
\begin{equation}\label{eq:14h26}
  \sup_{t\in[0,T]} \int_{\R^n} \( |a^\eps(t,x)|^2 -
  |a(t,x)|^2\)^{\si+1} dx \les \eps^2.  
\end{equation}
This yields the first part of Proposition~\ref{prop:quad}, along with
a bound on the rate of convergence as $\eps \to 0$. For the current
density, write
\begin{equation*}
  \IM\(\eps \overline u^\eps \nabla u^\eps\) = |a^\eps|^2\nabla \phi +
  \IM\(\eps \overline a^\eps \nabla a^\eps\). 
\end{equation*}
Since $\nabla \phi\in L^\infty([0,T]\times\R^n)$, \eqref{eq:14h26} yields
\begin{equation*}
 |a^\eps|^2\nabla \phi \Tend \eps 0 |a|^2\nabla \phi\quad \text{in
  }C([0,T];L^{\si+1}).  
\end{equation*}
On the other hand, since $a^\eps$ is bounded in $C([0,T];H^1)$, we
have:
\begin{equation*}
\IM\(\eps \overline a^\eps \nabla a^\eps\)  =\O(\eps)   \quad \text{in
  }C([0,T];L^{1}). 
\end{equation*}
This completes the proof of Proposition~\ref{prop:quad}. 
\begin{remark}
Since we have used \eqref{eq:uniform2} with $k=1$ only, we could also
refine the statements of Proposition~\ref{prop:quad} when $k\ge 2$ is
allowed in  \eqref{eq:uniform2}. 
\end{remark}

\section{Proof of Theorem~\ref{theo:ghost}}
\label{sec:ghost}

To prove Theorem~\ref{theo:ghost}, resume the
approach of E.~Grenier \cite{Grenier98}. His idea was to seek 
\begin{equation*}
  u^\eps(t,x) =a^\eps(t,x)e^{i\phi^\eps(t,x)/\eps},
\end{equation*}
where the pair $U^\eps =(a^\eps,\nabla\phi^\eps)$ is given by a system
of the form \eqref{system:hyp} (with $E\equiv 0$). The point is that
the form \eqref{system:hyp} for this $U^\eps$ meets all the
requirements that we have 
listed, if and only if the nonlinearity is defocusing, and cubic at the
origin. In the case of the homogeneous nonlinearity of
\eqref{eq:nlssemi}, the only admissible case is then $\si=1$. The
second step of the analysis in \cite{Grenier98} consists in showing
that under suitable assumptions, $a^\eps$ and $\phi^\eps$ have an
asymptotic expansion of the form
\begin{equation*}
  a^\eps \Eq \eps 0 a + \eps a^{(1)} + \eps^2 a^{(2)}+\ldots\quad
  ;\quad \phi^\eps \Eq \eps 0 \phi + \eps \phi^{(1)} + \eps^2
  \phi^{(2)}+\ldots 
\end{equation*}
The pair $(a,\phi)$ solves (the analogue of) \eqref{eq:limite}. Note
that because the phase $\phi^\eps$ is divided by $\eps$, we need to
take $\phi^{(1)}$ into account in order to have a point-wise
description of $u^\eps$:
\begin{equation*}
  u^\eps \Eq \eps 0 a e^{i\phi^{(1)}} e^{i\phi/\eps}.  
\end{equation*}
Therefore, the rapidly oscillatory phase for $u^\eps$ is given by
$\phi$, and its amplitude at leading order is given by $a
e^{i\phi^{(1)}}$ (which does not depend on $\eps$). If $u^\eps$ solves
\begin{equation*}
  i\eps \d_{t}u^\eps  + \frac{\eps^2}{2}\Delta u^\eps = f\(\la
u^\eps\ra^{2}\)u^\eps \quad ;\quad u^\eps_{\arrowvert
  t=0}=a_0^\eps e^{i\phi_{0}/\eps}, 
\end{equation*}
where $a_0^\eps$ satisfies Assumption~\ref{assu:ghost}, then
$\phi^{(1)}$ is given by the system
\begin{equation*}
\left\{
  \begin{aligned}
    \d_t \phi^{(1)} +\nabla \phi \cdot \nabla \phi^{(1)} +
    2\RE\left(\overline a a^{(1)}\right)f'\left(
    |a|^2\right)&=0,\\ 
   \d_t a^{(1)} +\nabla\phi\cdot \nabla a^{(1)} + \nabla
   \phi^{(1)}\cdot \nabla a + \frac{1}{2} a^{(1)}\Delta \phi
   +\frac{1}{2}a\Delta \phi^{(1)}      &= \frac{i}{2}\Delta a,\\
\phi^{(1)}\big|_{t=0}=0\quad ; \quad a^{(1)}\big|_{t=0}=a_1.
  \end{aligned}
\right.
\end{equation*}
This coupling shows that $\phi^{(1)}$ is a (nonlinear) function of
$a,\phi$, and $a_1$, the term of order $\eps$ in the expansion of the
initial data $a_0^\eps$. In our case, $f(y)=y^\si$: we introduce the system
\begin{equation}\label{eq:ghostsyst}
\left\{
  \begin{aligned}
    \d_t \phi^{(1)} +\nabla \phi \cdot \nabla \phi^{(1)} +
    2\si \RE\left(\overline a a^{(1)}\right)
    |a|^{2\si-2}&=0,\\ 
   \d_t a^{(1)} +\nabla\phi\cdot \nabla a^{(1)} + \nabla
   \phi^{(1)}\cdot \nabla a + \frac{1}{2} a^{(1)}\Delta \phi
   +\frac{1}{2}a\Delta \phi^{(1)}      &= \frac{i}{2}\Delta a,\\
\phi^{(1)}\big|_{t=0}=0\quad ; \quad a^{(1)}\big|_{t=0}=a_1.
  \end{aligned}
\right.
\end{equation}

\begin{lemma}\label{weaklemma:MUK2}
Let $n\ge 1$, and let Assumption~\ref{assu:ghost} be
satisfied. Then \eqref{eq:ghostsyst} has a
unique solution 
$(\phi^{(1)},a^{(1)})$ in $C([0,T^*[;H^\infty(\R^n))$, where $T^*$
is given by Lemma~$\ref{weaklemma:MUK}$.   
\end{lemma}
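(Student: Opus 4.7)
The plan is to adapt the MUK-type change of unknowns from the proof of Lemma~\ref{lemma:MUK} to the linearized system \eqref{eq:ghostsyst}. Indeed, \eqref{eq:ghostsyst} is, up to the smooth source $\tfrac{i}{2}\Delta a$, the linearization of \eqref{eq:limite} at the known solution $(\phi,a)$, and the main obstacle is exactly the same as in Lemma~\ref{lemma:MUK}: the coefficient $|a|^{2\sigma-2}$ in front of $a^{(1)}$ degenerates on the vacuum set $\{a=0\}$ when $\sigma\ge 2$, so a direct energy estimate for $(\phi^{(1)},a^{(1)})$ cannot be symmetrized.

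First, setting $v=\nabla\phi$, $A=a^\sigma$, I would introduce
\begin{equation*}
v^{(1)}\defn \nabla\phi^{(1)},\qquad A^{(1)}\defn \sigma a^{\sigma-1}a^{(1)},
\end{equation*}
which are motivated as the linearizations of $v$ and $A$ in the direction $(\phi^{(1)},a^{(1)})$. The algebraic identity $\sigma \RE(\overline a\,a^{(1)})\,|a|^{2\sigma-2}=\RE(\overline A\, A^{(1)})$ eliminates the degenerate factor. Differentiating the first equation of \eqref{eq:ghostsyst} in space, using $\curl v=0$, and computing $\d_t A^{(1)}$ by means of the equations satisfied by $a$ and $a^{(1)}$, one checks that $(v^{(1)},A^{(1)})$ satisfies the linear system
\begin{align*}
&\d_t v^{(1)} + (v\cdot\nabla)v^{(1)} + 2\nabla\RE(\overline A\,A^{(1)}) = -(v^{(1)}\cdot\nabla)v,\\
&\d_t A^{(1)} + v\cdot\nabla A^{(1)} + \tfrac{\sigma}{2}A\,\DIV v^{(1)} = -\tfrac{\sigma}{2}A^{(1)}\DIV v - \sigma a^{\sigma-1}v^{(1)}\cdot\nabla a + \tfrac{i\sigma}{2}a^{\sigma-1}\Delta a,
\end{align*}
whose coefficients and source belong to $C([0,T^*[;H^\infty)$ since $\sigma\in\N$ and $(a,\phi)\in C([0,T^*[;H^\infty)$. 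After the constant rescaling $A^{(1)}\mapsto \sqrt{2/\sigma}\,A^{(1)}$, the principal part becomes symmetric hyperbolic with a constant symmetrizer, exactly as in \eqref{II.2ter}.

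Next, I would solve this linear symmetric hyperbolic system on $[0,T^*[$ with initial data $v^{(1)}|_{t=0}=0$ and $A^{(1)}|_{t=0}=\sigma a_0^{\sigma-1}a_1\in H^\infty$, by a straightforward adaptation of the energy method behind Lemma~\ref{lemma:hyp}; the linear setting does not require $s>n/2+1$, and the solution extends to the entire interval $[0,T^*[$ where the coefficients are defined. This yields a unique $(v^{(1)},A^{(1)})\in C([0,T^*[;H^\infty)$. I would then define $\phi^{(1)}$ as the unique $H^\infty$ solution of the scalar transport equation
\begin{equation*}
\d_t\phi^{(1)} + v\cdot\nabla\phi^{(1)}=-2\RE(\overline A\,A^{(1)}),\qquad \phi^{(1)}|_{t=0}=0,
\end{equation*}
and verify $\nabla\phi^{(1)}=v^{(1)}$ by observing that, thanks to $\curl v=0$, $\nabla\phi^{(1)}$ satisfies the same linear hyperbolic equation as $v^{(1)}$ with the same vanishing initial data. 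Finally, I would define $a^{(1)}$ as the unique $H^\infty$ solution of the transport equation arising from the second line of \eqref{eq:ghostsyst}:
\begin{equation*}
\d_t a^{(1)} + v\cdot\nabla a^{(1)} + \tfrac{1}{2}a^{(1)}\DIV v = \tfrac{i}{2}\Delta a - v^{(1)}\cdot\nabla a - \tfrac{1}{2}a\,\DIV v^{(1)},\qquad a^{(1)}|_{t=0}=a_1.
\end{equation*}

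To close the loop, it remains to verify the compatibility $A^{(1)}=\sigma a^{\sigma-1}a^{(1)}$: the difference $R\defn A^{(1)}-\sigma a^{\sigma-1}a^{(1)}$ satisfies a homogeneous linear transport equation of the form $\d_t R + v\cdot\nabla R + \tfrac{\sigma}{2}R\,\DIV v=0$ with $R|_{t=0}=0$, hence $R\equiv 0$. Together with the algebraic identity above, this confirms that $(\phi^{(1)},a^{(1)})$ satisfies the first equation of \eqref{eq:ghostsyst}. Uniqueness follows by running the argument in reverse: any solution $(\phi^{(1)},a^{(1)})\in C([0,T^*[;H^\infty)$ of \eqref{eq:ghostsyst} generates a pair $(v^{(1)},A^{(1)})$ solving the symmetrized system with the prescribed initial data, which is unique. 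The only real obstacle is the vacuum degeneracy of $|a|^{2\sigma-2}$, which I overcome via the nonlinear change of unknown $A^{(1)}=\sigma a^{\sigma-1}a^{(1)}$, transposing the MUK trick of \cite{MUK86} to the linearized level.
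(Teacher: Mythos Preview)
Your proof is correct and follows the same overall strategy as the paper's: absorb the degenerate factor $|a|^{2\sigma-2}$ into a new unknown linear in $a^{(1)}$, obtain a linear symmetric hyperbolic system with smooth coefficients, solve it, then reconstruct $\phi^{(1)}$ and $a^{(1)}$ by transport equations and close the loop by a uniqueness argument.

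The difference lies in the choice of auxiliary unknown. You take the complex linearization $A^{(1)}=\sigma a^{\sigma-1}a^{(1)}$ of $A=a^{\sigma}$, which works uniformly in $\sigma$: since $a^{\sigma-1}$ and $a^{\sigma}$ are polynomials in $(\RE a,\IM a)$, all coefficients and sources are automatically in $C([0,T^*[;H^\infty)$, and the identity $\sigma|a|^{2\sigma-2}\RE(\overline a\,a^{(1)})=\RE(\overline A\,A^{(1)})$ removes the vacuum degeneracy. The paper instead distinguishes the parity of $\sigma$: for even $\sigma$ it sets $A_1=|a|^{\sigma-2}\RE(\overline a\,a^{(1)})$, and for odd $\sigma=2m+1$ it sets $A_1=|a|^{2m}a^{(1)}$, in each case arranging that only \emph{even} powers of $|a|$ appear as coefficients. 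Your route is more economical (no case split) and stays closer to the substitution used in Lemma~\ref{lemma:MUK}; the paper's route keeps the auxiliary unknown real in the even case, which slightly shortens the symmetric system there. Either way the mechanism is the same MUK trick transposed to the linearized level. (A trivial remark: the constant in your rescaling should be $2/\sqrt{\sigma}$ rather than $\sqrt{2/\sigma}$ to exactly symmetrize the cross terms, but this does not affect the argument.)
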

\begin{proof}
Again, at the zeroes of $a$, \eqref{eq:ghostsyst} ceases to be
hyperbolic, and 
we cannot solve the Cauchy problem by a standard argument. The
strategy of the proof is  
to transform the equations so as to obtain an auxiliary hyperbolic
system for $(\nabla\phi^{(1)},A_1)$  
for some good unknown $A_1$, depending linearly upon $a^{(1)}$. The
definition of $A_1$ depends on the parity of $\si$.  
This allows to determine a function $\phi^{(1)}$ and next to define a
function $a^{(1)}$ by solving the  
second equation in \eqref{eq:ghostsyst}. We conclude the proof by 
checking that $(\phi^{(1)},a^{(1)})$ does solve
\eqref{eq:ghostsyst}. The first change of unknown consists in
considering $v_1 \defn  
\nabla \phi^{(1)}$. The first equation in \eqref{eq:ghostsyst} yields:
\begin{equation*}
  \d_t v_1 + v \cdot \nabla v_1 + 2\si \nabla \RE
  \(|a|^{2\si-2}  \overline a a^{(1)}\) = - v_1\cdot \nabla v, 
\end{equation*}
where we have denoted $v=\nabla \phi$. 

\smallbreak
\noindent {\bf First case: $\si\ge 2$ is even}. 
Consider the new unknown
\begin{equation*}
 A_1\defn |a|^{\si-2} \RE\(\overline a
a^{(1)}\).
\end{equation*}
We check that, if $(\phi^{(1)},a^{(1)})$ solves \eqref{eq:ghostsyst}, then 
\begin{equation}
  \label{eq:10h01}
  \left\{
    \begin{aligned}
     \d_t v_1 + v \cdot \nabla v_1 + 2\si |a|^{\si}\nabla  A_1 &= - v_1\cdot
\nabla v- 2\si  A_1 \nabla\( |a|^{\si}\),\\
 \d_t A_1 +v \cdot \nabla A_1 +\frac{1}{2}|a|^{\si}\DIV v_1 & =
-\frac{1}{\si}\nabla \(|a|^{\si}\)\cdot v_1 -\frac{\si}{2}A_1\DIV v\\
&\ +\frac{i}{2} \RE\( |a|^{\si-2}\overline a \Delta a\).  
    \end{aligned}
\right.
\end{equation}
This linear system is hyperbolic
symmetric, and its coefficients are 
smooth since $\si\in 2\N$ and $a,v\in
C^\infty([0,T^*[;H^\infty(\R^n))$,  from 
Lemma~\ref{lemma:MUK}. 
In particular, uniqueness for \eqref{eq:ghostsyst} follows from the
uniqueness for \eqref{eq:10h01}. Note that, since $\si-2\in 2\N$, 
\begin{equation*}
  (v_1,A_1)\big|_{t=0} = \(0,|a_0|^{\si-2}\RE\(\overline a_0 a_1\)\)\in
  H^\infty \(\R^n\)^2.
\end{equation*}
Therefore, \eqref{eq:10h01} possesses a unique solution in
$C^\infty([0,T^*[;H^\infty(\R^n))$. We next define $\phi^{(1)}\in
C^{\infty}([0,T^*[:H^\infty(\R^n))$ by 
\begin{equation*}
  \phi^{(1)}(t,x) = - \int_0^t \(v(\tau,x)\cdot v_1(\tau,x) + 
2\sigma |a(\tau,x)|^{\si}
  A_1(\tau,x)\)\, d\tau. 
\end{equation*}
Then $\d_t \(\nabla \phi^{(1)} -v_1\)= 0$, therefore
$v_1=\nabla \phi^{(1)}$ and hence $\phi^{(1)}$ satisfies
 \begin{equation*}
    \d_t \phi^{(1)} +v\cdot \nabla \phi^{(1)} + 
2\si |a|^{\si} A_1=0,\quad 
\phi^{(1)}\big|_{t=0}=0.
\end{equation*}
Once this is
granted, we can define $a^{(1)}\in
C^{\infty}([0,T^*[:H^\infty(\R^n))$ as the unique solution
of the linear equation
\begin{equation*}
\left\{
  \begin{aligned}
  & \d_t a^{(1)} +v\cdot \nabla a^{(1)} + \nabla
   \phi^{(1)}\cdot \nabla a + \frac{1}{2} a^{(1)}\DIV v 
   +\frac{1}{2}a\Delta \phi^{(1)} = \frac{i}{2}\Delta a,\\
&a^{(1)}\big|_{t=0}=a_1.
  \end{aligned}
\right.
\end{equation*}
By construction, $A_1$ and $|a|^{\si-2} \RE\( \overline a
a^{(1)}\)$ solve the same linear equation, where $\phi^{(1)}$ is
viewed as a smooth coefficient. Therefore, these two functions
coincide, and $(\phi^{(1)},a^{(1)})$  solves 
\eqref{eq:ghostsyst}. 

\smallbreak

\noindent{\bf Second case: $\si$ is odd}. In this case, $\si=2m+1$,
for some $m\in 
\N$. We consider the new unknown 
\begin{equation*}
  A_1\defn |a|^{\si-1}a^{(1)}=|a|^{2m}a^{(1)}. 
\end{equation*}

We check that $ (v_1,A_1)$ must solve 
\begin{equation*}
  \left\{
    \begin{aligned}
     \d_t v_1 + v \cdot \nabla v_1 + 2\si \RE\(|a|^{2m}\overline a
     \nabla A_1\)& = - v_1\cdot 
\nabla v- 2\si \RE\( A_1 \nabla\( |a|^{2m}\overline a\)\),\\
 \d_t A_1 +v \cdot \nabla A_1 +\frac{1}{2}|a|^{2m}a\DIV v_1 & =
-\frac{\si}{2} A_1\DIV v- |a|^{2m}\nabla a\cdot v_1\\
&\  +\frac{i}{2} |a|^{2m} \Delta a.  
    \end{aligned}
\right.
\end{equation*}
We can then conclude as in the first case, by considering
$(v_1,A_1,\overline A_1)$.
\end{proof}

Theorem~\ref{theo:ghost} follows from:
\begin{proposition}
  Let $n\le 3$, and let Assumption~\ref{assu:ghost} be satisfied. Set
  $  \widetilde  
  a \defn a e^{i\phi^{(1)}} $. Then for any  $T\in ]0,T^*[$, there
  exists $\eps(T)>0$ such that $a^\eps \in C([0,T];H^\infty)$ for
  $\eps\in ]0,\eps(T)]$, and
  \begin{equation*}
    \lA \tm -\widetilde a \rA_{L^\infty([0,T];H^k)} =\O(\eps), 
  \end{equation*}
where $k$ is as in Theorem~\ref{theo:tout}. 
\end{proposition}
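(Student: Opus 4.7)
The plan is to construct a refined WKB approximation $\breve a^\eps$ that is one order more accurate than $\widetilde a$, derive the evolution equation of the error $r^\eps \defn \tm - \breve a^\eps$, and then reuse the nonlinear symmetrization of Section~\ref{sec:reste} to close energy estimates (the solution $a^\eps$ is already known to exist on $[0,T]$ for $\eps$ small, by Theorem~\ref{theo:tout}). Concretely, set
$$
\breve a^\eps \defn \widetilde a + \eps\, a^{(1)} e^{i\phi^{(1)}} = \bigl(a+\eps a^{(1)}\bigr) e^{i\phi^{(1)}},
$$
which satisfies $\breve a^\eps-\widetilde a = \O(\eps)$ in $H^k$ and $\breve a^\eps|_{t=0}=a_0+\eps a_1$. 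A careful but mechanical expansion using \eqref{eq:limite} and \eqref{eq:ghostsyst} then yields the WKB consistency relation
$$
L\breve a^\eps + \tfrac{i}{\eps}\bigl(|\breve a^\eps|^{2\si}-|a|^{2\si}\bigr)\breve a^\eps = \eps\,\mathcal R^\eps,\qquad L\defn \partial_t+\nabla\phi\cdot\nabla+\tfrac12\Delta\phi-i\tfrac{\eps}{2}\Delta,
$$
with $\mathcal R^\eps$ bounded in $L^\infty([0,T];H^k)$. The crux is that $|\breve a^\eps|^2 = |a|^2 + 2\eps\RE(\overline a a^{(1)})+\O(\eps^2)$, so the leading $\eps^{-1}$-contribution equals $2i\si|a|^{2\si-2}\RE(\overline a a^{(1)})\widetilde a$, which is exactly the quantity cancelled by the $O(1)$ part of $L\widetilde a$ thanks to the first equation of \eqref{eq:ghostsyst}; the remaining pieces are $\O(\eps)$ in every $H^s$ by Lemmas~\ref{weaklemma:MUK} and~\ref{weaklemma:MUK2}.

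Subtracting this identity from \eqref{eq:ae} gives
$$
Lr^\eps + \tfrac{i}{\eps}\bigl[N(\tm)-N(\breve a^\eps)\bigr] = -\eps\,\mathcal R^\eps,\qquad N(z)\defn(|z|^{2\si}-|a|^{2\si})z,
$$
with $r^\eps(0)=a_0^\eps-a_0-\eps a_1 = \O(\eps^2)$ in every $H^s$ by Assumption~\ref{assu:ghost}. The stiff nonlinear term is treated exactly as in \S\ref{sec:nonlinear}, with $\breve a^\eps$ in place of $a$: using the functions $B_\si, G_\si$ of Notation~\ref{nota:GBP}, set
$$
\widehat\beta^\eps\defn B_\si\bigl(|\tm|^2,|\breve a^\eps|^2\bigr),\qquad \widehat g^\eps\defn G_\si\bigl(|\tm|^2,|\breve a^\eps|^2\bigr),\qquad \widehat q^\eps\defn \eps^{-1}\widehat\beta^\eps,
$$
and redo the computation of Proposition~\ref{prop:betag}. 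Because $\breve a^\eps$ solves \eqref{eq:ae} only up to $\O(\eps)$, the resulting equation for $\widehat\beta^\eps$ has the same transport/divergence structure as \eqref{eq:12h25}, augmented by a source term of size $\O(\eps)$. Consequently the triple $(r^\eps,\nabla r^\eps,\widehat q^\eps)$ is governed by a quasi-linear symmetric hyperbolic system of the form of Corollary~\ref{coro:betag}, plus a semilinear source bounded by $\eps$ in $H^k$.

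Applying Proposition~\ref{prop:quasilin} (or the $L^2$-modulated-energy argument of \S\ref{sec:si=2} in the critical case $\si=2$, $n\in\{2,3\}$), together with the $\O(\eps^2)$ size of the initial data and Gronwall's inequality, yields $\|r^\eps\|_{L^\infty([0,T];H^k)}\le C\eps$; the triangle inequality then concludes, since $\|\breve a^\eps-\widetilde a\|_{H^k}=\O(\eps)$. The main obstacle is the same lack of smoothness already handled in \S\ref{sec:quasilin}--\S\ref{sec:si>2}: initialising $\widehat q^\eps$ requires bounding $\eps^{-1}\bigl(|\tm(0)|^2-|\breve a^\eps(0)|^2\bigr)\,\mathcal Q_\si(\tm(0),\breve a^\eps(0))$ in $H^{k-1}$, and although the difference of squares is $\O(\eps^2)$ so the $\eps^{-1}$ prefactor is harmless, the factor $\mathcal Q_\si$ is only $C^{\si-2}$ at the origin. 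Lemma~\ref{lemm:comp}, which exploits the homogeneity of degree $\si-1$ of $\mathcal Q_\si$, is precisely what keeps $\widehat q^\eps(0)$ bounded uniformly in $\eps$.
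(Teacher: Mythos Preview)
Your proposal is correct and follows essentially the same route as the paper. The only cosmetic difference is organizational: you introduce the intermediate approximant $\breve a^\eps=\widetilde a+\eps\,a^{(1)}e^{i\phi^{(1)}}$, compare $a^\eps$ to it, and then use the triangle inequality, whereas the paper compares $a^\eps$ directly to $\widetilde a$ and rewrites the source term so that the singular part becomes $\eps^{-1}\bigl(|a^\eps|^{2\si}-|\widetilde a+\eps\,\widetilde a^{(1)}|^{2\si}\bigr)a^\eps$; in both cases the nonlinear symmetrization uses $B_\si,G_\si$ evaluated at $\bigl(|a^\eps|^2,|\widetilde a+\eps\,\widetilde a^{(1)}|^2\bigr)$, and the initial data and source terms are $\O(\eps)$ for the same reasons. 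One small wording point: your last sentence should say that $\widehat q^\eps(0)=\O(\eps)$ in $H^{k-1}$ (not merely bounded), which is exactly what your observation ``the difference of squares is $\O(\eps^2)$'' gives once combined with Lemma~\ref{lemm:comp}.
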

\begin{proof}
 Since the proof follows the same lines as the proof of
 Theorem~\ref{theo:main}, we shall indicate its main steps only. Denote
 \begin{equation*}
   \reste = a^\eps -\widetilde a\quad ;\quad  \widetilde a^{(1)} =
   a^{(1)} e^{i\phi^{(1)}}. 
 \end{equation*}
From \eqref{eq:limite}, \eqref{eq:ae} and \eqref{eq:ghostsyst}, we see
that $\reste$ solves 
\begin{equation*}
  \left\{
    \begin{aligned}
      \d_t \reste + \tv\cdot \nabla \reste + \frac{1}{2}\reste\DIV \tv
      - i\frac{\eps}{2}\Delta \reste &= i\frac{\eps}{2}\Delta
      \widetilde a -i S^\eps,\\
\reste_{\mid t=0} = a_0^\eps -a_0 = \eps a_1 +\O\(\eps^2\),
    \end{aligned}
\right.
\end{equation*}
where the term $S^\eps$ is given by:
\begin{equation*}
  S^\eps = \frac{1}{\eps}\(\la a^\eps \ra^{2\si} -\la \widetilde a
  \ra^{2\si} \) 
  a^\eps -2\si \widetilde a|\widetilde a|^{2\si -2}\RE\( \overline
  {\widetilde a}
  \widetilde a^{(1)}\).  
\end{equation*}
We check that for all $s\ge 0$, we have, in $H^s(\R^n)$:
\begin{equation*}
  S^\eps = \frac{1}{\eps}\(\la a^\eps \ra^{2\si} -\la \widetilde a
  +\eps \widetilde a^{(1)} \ra^{2\si} \) 
  a^\eps +2\si \reste |\widetilde a|^{2\si -2}\RE\( \overline
  {\widetilde a}
  \widetilde a^{(1)}\) + \O(\eps).  
\end{equation*}
The last term should be viewed as a small source term. The second one
is linear in $\reste$, and is suitable in view of an application of
the Gronwall Lemma. There remains to handle the first term.  
At this stage, we can mimic the approach detailed in
\S\ref{sec:reste}. Introduce the nonlinear change of unknown:
\begin{equation*}
  \widetilde q^\eps = \frac{1}{\eps} B_\si \( \la a^\eps\ra^2,\la
  \widetilde a 
  +\eps \widetilde a^{(1)}\ra^2\) \quad ;\quad \widetilde g^\eps =
  G_\si \( \la a^\eps\ra^2,\la  \widetilde a
  +\eps \widetilde a^{(1)}\ra^2\),
\end{equation*}
where $B_\si$ and $G_\si$ are defined in Notation~\ref{nota:GBP}. We
check that $(\reste,\nabla\reste,\widetilde q^\eps)$ solves a system
of the form \eqref{eq:systfinal}, plus some extra source terms of
order $\O(\eps)$ in $H^s(\R^n)$. We also note that the initial data
are of order $\O(\eps)$, from Assumption~\ref{assu:ghost}:
\begin{equation*}
  \lA(\reste,\nabla\reste)\bigr|_{ t=0}\rA_{H^s} =
  \O(\eps),\ \forall s\ge 0. 
\end{equation*}
We also have 
\begin{equation*}
  \lA\widetilde q^\eps\bigr|_{ t=0}\rA_{H^{k-1}}=\O(\eps),
\end{equation*}
where $k$ is as Theorem~\ref{theo:tout}. 

Following the approach of \S\ref{sec:reste}, the proposition stems
from Gronwall lemma. Note also that the time $T$ can be taken
arbitrarily close to $T^*$, by the usual continuity argument, since we
now have an error estimate that goes to zero with $\eps$. 
\end{proof}
To conclude this paragraph, we note that unless $a_0$ is real valued and
  $a_1\in i\R$, one must not expect $\widetilde a=a$. Indeed, we see
  that 
  \begin{equation*}
    \phi^{(1)}_{\mid t=0}=0\quad ;\quad \d_t \phi^{(1)}_{\mid
    t=0}=-2\si \RE\(\overline a_0a_1\)|a_0|^{2\si-2}.
  \end{equation*}
So in general, $\phi^{(1)}\not \equiv 0$, and $\widetilde a\not =a$. 
On the other hand if $a_0$ is real-valued, then so is $a$. In this
case,
\begin{equation*}
  \IM\(\overline a \Delta a \)\equiv 0,
\end{equation*}
and $(\phi^{(1)},\RE (\overline a 
a^{(1)}))$ solves an 
homogeneous linear system. Therefore, if $\RE (\overline a
a^{(1)})=0$ at time $t=0$, then $\phi^{(1)}\equiv 0$.

\section{Further remarks}
\label{sec:remarks}

The following remarks serve to clarify some features of 
the systems we produced. 

\subsection{Regularity of the initial data}

It is a matter of routine to extend the previous analysis to the case
where the initial data belong  
to $H^s(\R^n)$ with $s<+\infty$ large enough.

\subsection{Introducing an external potential}\label{sec:potential} 

To treat a possibly more physically relevant case, one might want to
consider \eqref{eq:nlssemi} with an extra external potential:
\begin{equation*}
i\eps \partial_{t} \tu + \frac{\eps^2}{2}\Delta \tu = V \tu + \la
\tu\ra^{2\sigma}\tu\quad ;\quad \tu_{\arrowvert
  t=0}=a_0^\eps e^{i\phi_{0}/\eps}, 
\end{equation*}
where $V=V(t,x)$ is real-valued, and possibly time-dependent. 
As noticed in \cite{CaBKW}, it is sensible to consider an external
potential $V$ and an initial phase $\phi_0$ which are smooth and
sub-quadratic:
\begin{equation*}
  \d_x^\alpha V\in C(\R;L^\infty(\R^n)),\ \d^\alpha \phi_0\in
  L^\infty(\R^n),\ \forall \alpha \in \N^n, \ |\alpha |\ge 2. 
\end{equation*}
This includes the case of the harmonic oscillator, commonly used in
the theory of Bose--Einstein condensation (\cite{KNSQ}).  
The main remark in \cite{CaBKW} is that the introduction of this
assumption does not deeply change the analysis. Indeed, we can resume
the analysis of \eqref{eq:nlssemi}: introduce the solution to the
standard eikonal equation
\begin{equation*}
  \partial_t \phi_{\rm eik} +\frac{1}{2}|\nabla_x \phi_{\rm eik}|^2
    +V=0\quad ;\quad 
    \phi_{{\rm eik}}\bigr|_{ t=0}=\phi_0\, .
\end{equation*}
Decomposing the phase $\phi$ of the above quasi-linear analysis as
\begin{equation*}
  \phi = \phi_{\rm eik} +\underline \phi,
\end{equation*}
and seeking $\underline \phi$ in Sobolev spaces, we see that the extra
terms appearing after this sort of linearization can be treated like
semi-linear terms. Therefore, mimicking the above computations, and
using only extra perturbative arguments, it is easy to adapt
Theorems~\ref{theo:main} and \ref{theo:ghost} to this case. 

\subsection{About conservation laws}
\label{sec:conserv}

Recall some important evolution laws for \eqref{eq:nlssemi}:
\begin{align*}
  \text{Mass: }& \frac{d}{dt}\|u^\varepsilon(t)\|_{L^2}=0\, .\\
\text{Energy: }& \frac{d}{dt}\left(\frac{1}{2}\|\varepsilon\nabla_x
  u^\varepsilon\|_{L^2}^2 
+ \frac{1}{\si+1}\|u^\varepsilon\|_{L^{2\si+2}}^{2\si+2} \right)=0\, .\\
\text{Momentum: }&\frac{d}{dt}\IM\int \overline
u^\varepsilon(t,x) 
\varepsilon \nabla_x u^\varepsilon(t,x)dx =0\, .\\ 
\text{Pseudo-conformal law: }& \frac{d}{dt}\left(\frac{1}{2}\|J^\varepsilon(t)
u^\varepsilon\|_{L^2}^2 
+ \frac{t^2}{\si+1}\|u^\varepsilon\|_{L^{2\si+2}}^{2\si+2}
\right)\\
&= \frac{t}{\si+1}(2-n\si)\|u^\varepsilon\|_{L^{2\si+2}}^{2\si+2} ,
\end{align*}
where $J^\varepsilon(t) = x + i\varepsilon t\nabla_x$. These
evolutions are deduced 
from the usual ones ($\varepsilon =1$, see
e.g. \cite{CazCourant,Sulem}) via the scaling 
$\psi(t,x)= u(\varepsilon t,\varepsilon x)$. Writing $u^\eps = a^\eps
e^{i\phi/\eps}$, and passing to 
the limit formally in the above formulae yields:
\begin{align*}
   \frac{d}{dt}\|a(t)\|_{L^2}&=0\, .\\ 
 \frac{d}{dt}\int \left( \frac{1}{2}|a(t,x)|^2 |\nabla \phi(t,x)|^2 +
\frac{1}{\si+1}|a(t,x)|^{2\si+2}\right)dx &=0\, .\\ 
\frac{d}{dt}\int |a(t,x)|^2 \nabla \phi(t,x)dx &=0\, .\\ 
 \frac{d}{dt}\int \left(\frac{1}{2} \left| \left(x -t \nabla
  \phi(t,x)\right)a(t,x)\right|^2 +\frac{t^2}{\si+1} |a(t,x)|^{2\si+2} 
\right)&dx=\\
=\frac{t}{\si+1}(2-n\si) &\int |a(t,x)|^{2\si+2}dx\, .
\end{align*}
Note that we also
have the conservation (\cite{CN}):
\begin{equation*}
   \frac{d}{dt}\RE\int \overline u^\varepsilon(t,x)
 J^\varepsilon(t)u^\varepsilon(t,x) dx=0 \, ,
\end{equation*}
which yields:
\begin{equation*}
  \frac{d}{dt}\int \left(x -t \nabla
  \phi(t,x)\right)|a(t,x)|^2 dx =0\, . 
\end{equation*}
All these expressions involve only $(|a|^2, \nabla \phi)=(|\widetilde
a|^2, \nabla \phi)$. Recall that 
if we set $(\rho,v)=(|a|^2, \nabla \phi)$, then \eqref{eq:limite}
implies 
\begin{equation}\label{eq:eulerMUK}
  \left\{
    \begin{aligned}
     &\d_t v +v\cdot \nabla v + \nabla \(\rho^\si\)=0\quad &&;\quad
v_{\mid t =0}=\nabla\phi_{0},\\ 
& \d_t \rho + \DIV \(\rho v\) =0\quad  && ;\quad \rho_{\mid
  t=0}=|a_0|^2. 
    \end{aligned}
\right.
\end{equation}
Rewriting the above evolution laws, we get:
\begin{align}
   \frac{d}{dt}\int_{R^n}\rho(t,x)dx &=0\, .\notag\\ 
 \frac{d}{dt}\int \left( \frac{1}{2}\rho(t,x) |v(t,x)|^2 +
\frac{1}{\si+1}\rho(t,x)^{\si+1}\right)dx &=0  :\text{ energy}.\notag\\  
\frac{d}{dt}\int \rho(t,x) v(t,x)dx &=0\,.\notag\\ 
 \frac{d}{dt}\int \left(\frac{1}{2} \left| \left(x -t 
  v(t,x)\right)\right|^2 \rho(t,x) +\frac{t^2}{\si+1} \rho(t,x)^{\si+1} 
\right)&dx=\notag\\
=\frac{t}{\si+1}(2-n\si) &\int \rho(t,x)^{\si+1}dx\, .\label{eq:pseudoconf} \\
\frac{d}{dt}\int \left(x -t v (t,x)\right)\rho(t,x) dx &=0\,
 .\notag
\end{align}
We thus retrieve formally some evolution laws
for the compressible Euler equation \eqref{eq:eulerMUK} (see
e.g. \cite{Serre97,Xin98}), with the pressure law
$p(\rho)=c \rho^{\si+1}$.

\subsection{About global in time results}
\label{sec:blowup}
We point out that the solution to \eqref{eq:limite} must not be
expected to be smooth for all time: the time $T^*$ in
Lemma~\ref{lemma:MUK} is finite in general. Recall that 
$(\rho,v)=(|a|^2,\nabla\phi)$ solves
\eqref{eq:eulerMUK}. 
Theorem~$3$ in~\cite{MUK86} (see also \cite{Xin98}) implies that, if
$\nabla \phi_0$ and $|a_0|^2$ are  
compactly supported, then the life span $T^*$ in Lemma~\ref{lemma:MUK}
is necessarily finite. Note that these initial data can be chosen 
arbitrarily small: the phenomenon remains. 
\begin{proposition}\label{prop:blowup}
Let $n\ge1$ and $\sigma\ge 1$. For all initial data 
$(a_{0},\phi_{0})\in C^2(\R^n)$ with compact support, there does not
exist $(a,\phi)\in C^2([0,+\infty[\times \R^n)$  
satisfying the Cauchy problem~\eqref{eq:limite}.
\end{proposition}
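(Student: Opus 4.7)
The statement reduces to the classical fact that the isentropic compressible Euler system admits no global $C^{1}$ solution with compactly supported, non-trivial initial data (this is Theorem~3 of~\cite{MUK86}, see also~\cite{Xin98}). Indeed, if $(a,\phi)\in C^{2}([0,+\infty[\times\R^{n})$ solved \eqref{eq:limite}, then the pair $(\rho,v)\defn(|a|^{2},\nabla\phi)$ would be a global $C^{1}$ solution of \eqref{eq:eulerMUK} with $(\rho_{0},v_{0})=(|a_{0}|^{2},\nabla\phi_{0})$ compactly supported. The conclusion is vacuous if $a_{0}\equiv 0$, so we may assume $\rho_{0}\not\equiv 0$.

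The heart of the argument is then a virial/conservation law analysis for \eqref{eq:eulerMUK}. First, finite propagation speed for this quasi-linear symmetric hyperbolic system (applied to $(\nabla\phi, a^{\sigma})$ as in Lemma~\ref{lemma:MUK}) confines $\mathrm{supp}\,\rho(t,\cdot)\cup\mathrm{supp}\,v(t,\cdot)\subset B(0,R_{0}+Ct)$ for some $C$ depending on the data. The mass $M_{0}=\int\rho\,dx>0$ and energy $E_{0}=\int\bigl(\tfrac{1}{2}\rho|v|^{2}+\tfrac{1}{\sigma+1}\rho^{\sigma+1}\bigr)dx>0$ are conserved, and a direct computation from \eqref{eq:eulerMUK} gives
\begin{equation*}
\frac{d}{dt}\int x\cdot\rho v\,dx = 2E_{0}+(n\sigma-2)F(t),\qquad F(t)\defn\tfrac{1}{\sigma+1}\int\rho^{\sigma+1}\,dx.
\end{equation*}
Combined with the Hölder lower bound $F(t)\gtrsim M_{0}^{\sigma+1}/R(t)^{n\sigma}$ (coming from $\mathrm{supp}\,\rho(t)\subset B(0,R_{0}+Ct)$) and the Cauchy--Schwarz upper bound $\bigl|\int x\cdot\rho v\,dx\bigr|\le R(t)\sqrt{2M_{0}E_{0}}$, this identity forces $\int x\cdot\rho v\,dx$ to grow at least linearly in $t$, with coefficient bounded below in terms of $M_{0}$ and $E_{0}$.

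The main obstacle is that the naive linear upper and lower bounds above have matching growth rates, so the contradiction is not immediate. The way around this, following Makino--Ukai--Kawashima, is to use the pseudo-conformal identity \eqref{eq:pseudoconf} (which yields a sign-definite evolution of $\tfrac{1}{2}\int|x-tv|^{2}\rho\,dx + \tfrac{t^{2}}{\sigma+1}\int\rho^{\sigma+1}dx$) together with the Hölder lower bound on $F$, forcing the support to expand either strictly faster or strictly slower than allowed by finite propagation speed according to the sign of $n\sigma-2$. The delicate case is the pseudo-conformal balanced regime $n\sigma=2$, where one needs a logarithmic refinement of the virial functional as in~\cite{MUK86}; once that case is handled the result is uniform in $(n,\sigma)$, and we obtain the desired contradiction with the assumed global $C^{2}$ regularity of $(a,\phi)$.
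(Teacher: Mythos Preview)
Your reduction to the compressible Euler system and the use of the symmetric hyperbolic structure for $(\nabla\phi,a^{\sigma})$ are both correct and match the paper. But you miss the crucial sharpening that the paper singles out: because the matrices $A_{j}$ in $\partial_{t}U+\sum A_{j}(U)\partial_{j}U=0$ are \emph{linear} in $U$ (hence $A_{j}(0)=0$), the propagation speed vanishes at vacuum, and Theorem~2 of \cite{MUK86} gives that the support of $(\rho,v)$ stays inside the \emph{initial} support, not merely inside a ball of radius $R_{0}+Ct$. With a fixed radius $R_{0}$, your virial identity $\tfrac{d}{dt}\int x\cdot\rho v\,dx=2E_{0}+(n\sigma-2)F(t)$ immediately gives a contradiction: the right-hand side is bounded below by $n\sigma E_{0}>0$ (using $F(t)\le E_{0}$ when $n\sigma<2$, and by $2E_{0}$ when $n\sigma\ge 2$), while $\bigl|\int x\cdot\rho v\,dx\bigr|\le R_{0}\sqrt{2M_{0}E_{0}}$ stays bounded. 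No case distinction on the sign of $n\sigma-2$ and no ``logarithmic refinement'' are needed.

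Your route through a linearly growing support and the pseudo-conformal law could in principle be pushed through (this is closer to the argument in \cite{Xin98}, designed for Navier--Stokes where the fixed-support property fails), but as written it is only a sketch: you acknowledge the matching growth rates and defer the $n\sigma=2$ case to an unspecified refinement. More to the point, attributing that refinement to \cite{MUK86} is inaccurate---their argument rests precisely on the fixed-support observation above, which is also the ``only point'' the paper flags as needing care beyond a direct citation of their Theorem~3.
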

A word of caution: because of one technical assumption in the
definition of regular solution in~\cite{MUK86},  
Theorem~3 in \cite{MUK86} does not apply directly. 
Yet, one can prove our claim by combining the proof 
of Lemma~\ref{lemma:MUK} with the approach in~\cite{MUK86}. 
Indeed, recall that $U\defn (a^\sigma,\nabla\phi)$ satisfies 
$\partial_{t}U + \sum A_{j}(U)\partial_{j}U=0$ where the $A_{j}$'s are
$n\times n$ matrices linear in their argument. Therefore,   
the proof of Theorem~$2$ in \cite{MUK86} shows that $U$ is compactly
supported, and so is $(\rho,v)\defn (|a|^2,\nabla \phi)$,  
with support included in the support of $(\la a_{0}\ra^2,\nabla
\phi_{0})$. And this is the only point which requires the above
mentioned  technical assumption.
\smallbreak

Note also that the proof of this result in \cite{Xin98} relies on the
evolution law for the total pressure
\begin{equation}\label{eq:pression}
  \int_{\R^n} p(t,x)dx=\int_{\R^n} \rho(t,x)^{\si+1}dx .  
\end{equation}
This approach is very similar to the Zakharov--Glassey method
\cite{Z,Glassey}, which yields a sufficient condition for the finite time
blow-up of solutions to the \emph{focusing} nonlinear Schr\"odinger
equation. As noticed by M.~Weinstein \cite{Weinstein83}, the identity
used by Zakharov, 
and generalized by Glassey, follows from the pseudo-conformal law,
along with the conservation of energy. For $\si\ge 2/n$ and a
defocusing nonlinearity, this approach yields an upper bound for the
$L^2$-norm of $x u$, the momentum of $u$. When this upper bound may become
negative, finite time blow-up occurs. 

In the present context, the nonlinearity is defocusing, but the idea
is similar. Note that (the generalized version of) \eqref{eq:pseudoconf}
is the key ingredient in 
the proof of Z.~Xin \cite{Xin98} (Z.~Xin considers
Navier--Stokes equations). Expanding \eqref{eq:pseudoconf}, and using
the conservation of energy, we recover an upper bound for
\eqref{eq:pression} which goes to
zero as $t\to \infty$. But so long
as $v$ remains bounded, 
\eqref{eq:eulerMUK} is an ordinary differential equations for $\rho$,
thus contradicting the upper bound for \eqref{eq:pression}, 
unless $v$ ceases to be smooth in finite time (see \cite{Xin98} for
the details).

\subsection{About focusing nonlinearities}

The main feature of the limit system we used is that it enters, up to
a change of unknowns,  
into the framework of quasi-linear hyperbolic systems. This comes from 
the fact that we consider the defocusing case. 
Had we worked instead with the focusing case, where $+|u|^{2\sigma}u$
is replaced with  
$-|u|^{2\sigma}u$, the corresponding limit system would have been
ill-posed. We refer to \cite{GuyCauchy}, in which G. M\'etivier 
establishes Hadamard's instabilities for non-hyperbolic nonlinear equations.
\smallbreak

As an example, consider the Cauchy problem
\begin{equation}\label{syst:elliptic}
  \left\{
    \begin{aligned}
&\d_t \phi +\frac{1}{2}|\partial_{x} \phi|^2- |a|^{2\si}=0
\quad &&;\quad \phi_{\mid t =0}=\phi_{0},\\
&\d_t a +\partial_{x}\phi\partial_{x} a +\frac{1}{2}a
\partial_{x}^2 \phi = 0\quad   &&;\quad a_{\mid
  t=0}=a_0.
 \end{aligned}
\right.
\end{equation}
The following result follows from
Hadamard's argument (see \cite{GuyCauchy}).

\begin{proposition}
Suppose that $(\phi,a)$ in $C^2([0,T]\times \R)$ solves
\eqref{syst:elliptic}. 
If $\phi_{0}(x)$ is real analytic near $\underline{x}$ and if
$a_{0}(\underline x)>0$, 
then $a_{0}(x)$ is real analytic near $\underline{x}$. 
Consequently, there are smooth initial data for which the Cauchy
problem has no solution. 
\end{proposition}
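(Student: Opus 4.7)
My plan is to follow the MUK-type strategy of Lemma~\ref{lemma:MUK} and rewrite \eqref{syst:elliptic} as a $2\times 2$ quasi-linear first-order system in the variables $(v,\rho)\defn(\partial_x\phi,|a|^2)$. Because of the sign change in the nonlinearity, this reduced system is \emph{elliptic} instead of hyperbolic near $(0,\underline x)$, and classical analytic regularity then forces $a_0$ to be real analytic there. Concretely, differentiating the first equation of \eqref{syst:elliptic} in $x$, and combining the second equation with its complex conjugate (multiplied by $\bar a$ and $a$ respectively, and using $\partial_x^2\phi=\partial_x v$), one obtains
\begin{equation*}
\partial_t\begin{pmatrix}v\\ \rho\end{pmatrix}+M(v,\rho)\partial_x\begin{pmatrix}v\\ \rho\end{pmatrix}=0,\qquad M(v,\rho)\defn\begin{pmatrix}v & -\si\rho^{\si-1}\\ \rho & v\end{pmatrix}.
\end{equation*}
The characteristic polynomial is $(\lambda-v)^{2}+\si\rho^{\si}$, so the eigenvalues of $M$ are $v\pm i\sqrt{\si}\,\rho^{\si/2}$, both non-real whenever $\rho>0$. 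Since $\rho(0,\underline x)=a_0(\underline x)^2>0$, by continuity $\rho$ is bounded below on an open neighborhood $\O\subset[0,T]\times\R$ of $(0,\underline x)$, and there the symbol $\tau I+\xi M(v,\rho)$ is invertible for every real $(\tau,\xi)\neq(0,0)$: the system is a quasi-linear elliptic system with polynomial (hence real analytic) coefficients. In the quintic case $\si=2$, setting $W\defn v+i\sqrt{2}\,\rho$ even turns it into the complex Burgers equation $\partial_t W+W\partial_x W=0$, making the mechanism especially transparent.

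I would then invoke the classical interior analytic regularity theorem for nonlinear elliptic systems (Morrey; see his monograph \emph{Multiple Integrals in the Calculus of Variations}): any solution with enough classical regularity of an analytic nonlinear elliptic system is real analytic on the interior of its domain of ellipticity. Applied to $(v,\rho)\in C^{1}(\O)$, which follows from $(\phi,a)\in C^{2}$ after a standard bootstrap to $C^\infty$, this yields the real analyticity of $(v,\rho)$ in a neighborhood of $(0,\underline x)$. Restricting to $t=0$, one gets that both $\partial_{x}\phi_{0}$ and $|a_{0}|^{2}$ are real analytic near $\underline x$; since $a_{0}(\underline x)>0$, the square root is real analytic at $a_{0}(\underline x)^2$, and therefore $a_{0}=\sqrt{\rho_{0}}$ is real analytic near $\underline x$, which is the desired conclusion.

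The main obstacle is invoking the analytic regularity of elliptic equations in a form adapted to our first-order $2\times 2$ system. Classical statements of Morrey's theorem are typically phrased for scalar second-order equations, so one either reduces our system to such an equation (for example by eliminating $\rho$ to get a second-order nonlinear elliptic PDE for $\phi$), or appeals to a suitable version of the theorem for quasi-linear elliptic systems with analytic nonlinearities. Alternatively, one can give a self-contained Hadamard-style argument by complexifying the spatial variable: the characteristic slopes $v_{0}(\underline x)\pm i\sqrt{\si}\,a_{0}(\underline x)^{\si}$ through $(0,\underline x)$ leave the real plane, so the existence of a $C^{2}$ solution of \eqref{syst:elliptic} near $(0,\underline x)$ forces the data $(\phi_{0},|a_{0}|^{2})$ to admit a holomorphic extension to a complex neighborhood of $\underline x$, which is exactly the real analyticity we want.
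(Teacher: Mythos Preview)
The paper itself does not prove this proposition; it simply invokes M\'etivier's Hadamard-instability theorem for non-hyperbolic quasi-linear systems (the reference \cite{GuyCauchy}). Your reduction to the first-order system in $(v,\rho)$ with complex eigenvalues $v\pm i\sqrt{\si}\,\rho^{\si/2}$ is correct and is exactly the input that theorem requires.

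The gap in your main argument is the appeal to Morrey. Interior analytic regularity for elliptic systems yields real analyticity of $(v,\rho)$ on the \emph{open} strip $(0,T)\times\R$, but $(0,\underline x)$ lies on the boundary of your domain, and there is no well-posed boundary-value problem here to which one could apply boundary regularity. The sentence ``restricting to $t=0$'' is precisely the unjustified step: nothing in Morrey's theorem controls the radius of analyticity uniformly as $t\downarrow 0$, and this passage from interior analyticity to analyticity of the Cauchy trace is the entire non-trivial content of the Hadamard phenomenon, not a bookkeeping detail. Your final paragraph does identify the correct mechanism---complex characteristic speeds force a holomorphic extension of the solution to a complex wedge touching $\{t=0\}$---and that is essentially M\'etivier's proof. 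You should develop that line (diagonalize into Cauchy--Riemann type operators near $(0,\underline x)$, propagate holomorphy, read off analyticity of the trace) or cite \cite{GuyCauchy} directly, and drop the Morrey route. A minor point: the step $a_0=\sqrt{\rho_0}$ presumes $a_0$ is real; for complex $a$, apply the same reasoning to the first-order system in $(v,\RE a,\IM a)$, whose principal symbol still carries the non-real eigenvalues $v\pm i\sqrt{\si}\,|a|^{\si}$.
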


This shows that to study the semi-classical limit for
the focusing analogue of \eqref{eq:nlssemi}, working with analytic
data, as in \cite{PGX93,ThomannAnalytic}, is not only convenient:
it is necessary. 

\providecommand{\bysame}{\leavevmode\hbox to3em{\hrulefill}\thinspace}
\providecommand{\MR}{\relax\ifhmode\unskip\space\fi MR }
\providecommand{\MRhref}[2]{%
  \href{http://www.ams.org/mathscinet-getitem?mr=#1}{#2}
}
\providecommand{\href}[2]{#2}

\end{document}